\newcommand{\N}{\mathbb{N}}
\newcommand{\km}{\mathbf k}
\newcommand{\sm}{\mathbf s}
\newcommand{\am}{\mathbf a}
\newcommand{\idm}{\mathbf i}
\newcommand{\Km}{\mathbf K}
\newcommand{\Sm}{\mathbf S}
\newcommand{\Em}{\mathbf E}
\newcommand{\Idm}{\mathbf I}
\newcommand{\cleq}{\preccurlyeq}
\newcommand{\cgeq}{\succcurlyeq}
\newcommand{\bcw}{\bigcurlywedge}
\newcommand{\fst}{\mathsf{fst}}
\newcommand{\snd}{\mathsf{snd}}
\newcommand{\eval}{\mathsf{eval}}
\newcommand{\dom}{\text{dom}}
\title{Arrow algebras}
\author{Benno van den Berg and Marcus Bri\"et}
\date{\today}
\begin{document}

\begin{abstract} 
    In this paper we introduce arrow algebras,  simple algebraic structures which induce elementary toposes through the tripos-to-topos construction. This includes localic toposes as well as various realizability toposes, in particular, those realizability toposes which are obtained from partial combinatory algebras. Since there are many examples of arrow algebras and arrow algebras have a number of closure properties, including a notion of subalgebra given by a nucleus, arrow algebras provide a flexible tool for constructing toposes; we illustrate this by providing some general tools for creating toposes for Kreisel's modified realizability. 
\end{abstract}

\maketitle

\section{Introduction}

\subsection{Contributions}
The aim of this paper is to introduce \emph{arrow algebras}, simple algebraic structures generalising complete Heyting algebras (locales), which give rise to toposes. Indeed, we will show that any arrow algebra induces a tripos, an ``arrow tripos'', from which a topos can be obtained through the tripos-to-topos construction.

Besides localic toposes, the toposes which we obtain in this way include various realizability toposes. Typically, realizability toposes are obtained from partial combinatory algebras (pcas; see \cite{vanoosten08} or \refdefi{pca} below); we will see that every pca gives rise to an arrow algebra, and that the topos associated to the induced arrow algebra is isomorphic to the realizability topos over the pca.

For that reason arrow algebras are similar to Alexandre Miquel's implicative algebras \cite{miquel20}. Indeed, Miquel's work was a major source of inspiration for this paper and the definition of an arrow algebra is obtained by weakening one of the axioms for an implicative algebra; hence implicative algebras are also arrow algebras, but not conversely. 

It is the case, however, that every arrow algebra is equivalent to an implicative algebra in a suitable sense (see \refrema{equivalentimplicativealgebra} below). In particular, Miquel has shown that any set-based tripos is equivalent to an ``implicative tripos'' \cite{miquel20b}.

Still, we felt that it would be worthwile to generalise Miquel's theory. The reason is that there are many naturally occurring examples of arrow algebras which are not implicative algebras, of which we will see many examples in this paper. (For instance, the arrow algebra obtained from a pca will be an implicative algebra if and only if the pca is total.) Moreover, there is a simple construction of subalgebras through nuclei (see \refprop{subarrowalgebras} below) which in general only seems to yield arrow algebras. Also, we think it is better to understand an arrow tripos directly in terms of its associated arrow algebra, rather than through the implicative algebra equivalent to the arrow algebra. Indeed, the construction of the equivalent implicative algebra is too involved to make calculations in that structure feasible (again, see \refrema{equivalentimplicativealgebra} for the construction).

Indeed, we believe the advantage of the notion of an arrow algebra is that it is a simple algebraic structure with various closure properties and with many naturally occurring examples. In this way we expect the theory of arrow algebras to provide us with a more flexible framework for constructing new toposes. We will illustrate this latter point by constructing a number of toposes for Kreisel's modified realizability. 

The main contributions of this paper are therefore as follows:
\begin{enumerate}
    \item We introduce the notion of an arrow algebra and give numerous examples.
    \item We show how for each arrow algebra one can construct an associated \emph{arrow topos}. This construction goes via the tripos-to-topos construction \cite{hylandjohnstonepitts80,vanoosten08}, so, in fact, what we do is construct an associated \emph{arrow tripos}.
    \item We show that arrow algebras are closed under a notion of a subalgebra given by \emph{nuclei}. This is inspired by locale theory where there is a correspondence between sublocales of a locale $A$ and nuclei on a locale $A$. We define a notion of a nucleus on arrow algebra and we show how such a nucleus gives rise to another arrow algebra (which should be thought of as a ``subalgebra''). 
    \item We illustrate the flexibility of the theory of arrow algebras by showing how they can be used to construct various toposes for Kreisel's modified realizability.
\end{enumerate}   

\subsection{Related work} In the literature one can find a number of other attempts to isolate a basic structure which is sufficient for constructing a tripos. Among those we have focused mostly on Miquel's implicative algebras \cite{miquel20}, because our notion of an arrow algebra is a modification of his. Evidenced frames, as introduced by Cohen, Miquey and Tate in \cite{cohenetal21}, are closely related to implicative algebras. 

Another line of research focuses on \emph{basic combinatory objects} (BCOs), as introduced in Hofstra's paper \cite{hofstra06} (see also \cite{vanoostenzou16}). This paper has been very influential in advocating for a ``relative'' setting for studying realizability: indeed, we follow this here by making a separator part of the definition of an arrow algebra. Nevertheless, we have not written this paper in the context of BCOs, because we do not see how to give arrow algebras the structure of BCO in such a way that the induced triposes are isomorphic.

\subsection{Contents.}
The precise contents of our paper are as follows. In Section 2 we recall Miquel's notion of an implicative algebra as well as the notion of a tripos and a partial combinatory algebra. In Section 3 we introduce the notion of an arrow algebra, give a range of examples of arrow algebras and show that they are closed under a notion of subalgebra induced by a nucleus. In Section 4 we show how one can interpret $\lambda$-terms from the untyped $\lambda$-calculus in suitable arrow algebras. This will be used in Section 5, where we show how one can construct triposes from arrow algebras. In Section 6 we will show how the tripos obtained from a subalgebra of an arrow algebra is a subtripos of the tripos obtained from the original algebra. In Section 7 we will use the theory we have developed to construct various toposes for modified realizability.

\subsection{Acknowledgements.} The contents of this paper are based on the MSc thesis of the second author supervised by the first author \cite{briet23}. We thank Umberto Tarantino and the referees for their careful reading of the manuscript and their useful comments.

\section{Implicative algebras}

As said, our arrow algebras originated as a modification of Miquel's notion of an implicative algebra, as introduced in \cite{miquel20}. For that reason we will start by recapitulating Miquel's theory. While doing so, we will also recall the notions of a tripos and a partial combinatory algebra which will become important in the later sections. The main references for this section are \cite{miquel20,vanoosten08,zoethout22}.

\subsection{Definition} In order to define implicative algebras, we first need to define implicative structures.

\begin{defi}{implstr} An \emph{implicative structure} is a complete poset $(A, \cleq)$ together with a binary operation $\to: A \times A \to A$ satisfying the following two conditions:
        \begin{enumerate}
            \item[(1)] If $a' \cleq a$ and $b \cleq b'$ then $a \to b \cleq a' \to b'$.
            \item[(2)] For all $a \in A$ and subsets $B \subseteq A$ we have \[ a \to \bigcurlywedge_{b \in B} b = \bigcurlywedge_{b \in B} (a \to b). \]   
        \end{enumerate}
\end{defi}

\begin{rema}{intuitionforimplstr}
    We think of the elements of $A$ as \emph{truth values} or \emph{pieces of evidence}. However, we should \emph{not} think of $\cleq$ as giving us the logical ordering of these truth values: the logical ordering will be a derived concept. In many examples the ordering $\cleq$ is closer to a notion of \emph{subtyping} or an \emph{information ordering}: one could say that $a \cleq b$ means that any evidence contained in $a$ is also contained in $b$, making $b$ less informative than $a$. We will occasionally refer to this ordering as the \emph{evidential ordering}.
\end{rema}

The next step is that we single out those bits of evidence that we regard as conclusive: or those ``designated'' truth values that we consider to be true. This we do by choosing a \emph{separator}.

\begin{defi}{separator}
Let $A = (A, \cleq, \to)$ be an implicative structure. A \emph{separator} on $A$ is a subset $S \subseteq A$ such that the following are satisfied:
            \begin{enumerate}
                \item[(1)] If $a \in S$ and $a \cleq b$, then $b \in S$.
                \item[(2)] If $a \to b \in S$ and $a \in S$, then $b \in S$.
                \item[(3)] Both $\km$ and $\sm$ belong to $S$. 
            \end{enumerate}  
Here ${\bf k}$ and ${\bf s}$ are defined as follows:
    \begin{eqnarray*}
        {\bf k} & := & \bigcurlywedge_{a,b \in A} a \to b \to a \\
        {\bf s} & := & \bigcurlywedge_{a,b,c \in A} (a \to b \to c) \to (a \to b) \to (a \to c)
    \end{eqnarray*}
\end{defi}

\begin{defi}{implalgebra}
An \emph{implicative algebra} is a quadruple $(A, \cleq, \to, S)$ consisting of an implicative structure $(A, \cleq, \to)$ and a separator $S$ on that implicative structure.
\end{defi}

\begin{rema}{howtoread} We are using the convention that the operation $\to$ associates to the right, and hence $a \to b \to c$ has to be read as $a \to (b \to c)$. In addition, $\to$ binds stronger than $\bigcurlywedge$ and hence the scope of the occurrences of $\bigcurlywedge$ in the definitions of $\bf k$ and $\bf s$ above is both times the entire formula.
\end{rema}    

\subsection{Examples} Although the main emphasis in this paper will be on the more general arrow algebras that will be defined below, it is still good to see a number of examples of implicative algebras.

\subsubsection{Locales} Any locale (or complete Heyting algebra) $L$ is an example of an implicative structure. If we choose the ``canonical'' separator $S = \{ \top \}$, consisting only of the top element of the locale, then we obtain an implicative algebra. However, on a locale separators coincide with filters, so any filter on a locale equips the locale with the structure of an implicative algebra.

\subsubsection{Total combinary algebras} For the next example, we need to give a summary of the theory of partial and total combinatory algebras. This will provide us with some genuine examples of arrow algebras as well. For this summary we will follow the definitions and conventions of Jetze Zoethout's PhD thesis \cite{zoethout22}, which builds on earlier work such as \cite{hofstravanoosten03,hofstra06,stekelenburg13}. The reader should be warned that these are not standard; however, they are quite convenient for our purposes and that is why we shall follow them here.

\begin{defi}{pap} A \emph{partial applicative poset} is a triple $(P, \cdot, \leq)$ where $(P, \leq)$ is a poset and $\cdot$ is a partial binary operation $\cdot \, : A \times A \rightharpoonup A$ called \emph{application} which has downwards closed domain and is monotone on its domain. In other words:
    \begin{quote}
        if $a' \cdot b'$ is defined and $a \leq a'$ and $b \leq b'$, then $a \cdot b$ is defined and $a \cdot b \leq a' \cdot b'$.
    \end{quote}
We say that $P$ is \emph{total} is the application operation is total, and $P$ is \emph{discrete} if the order $\leq$ is a discrete order.
\end{defi}

\begin{rema}{onapplication} Most of the time the symbol for the application operation will be omitted, so that the result of applying $a$ to $b$ will be denoted $ab$ instead of $a \cdot b$. To indicate that $ab$ is defined, we will write $ab \downarrow$ (and $ab \uparrow$ if it is not defined). Furthermore, the convention is that application associates to the left, so $abc$ has to be read as $(ab)c$.
\end{rema}

\begin{defi}{filter}
    Let $(P, \cdot, \leq)$ be a partial applicative poset. A \emph{filter} $P^\#$ on $P$ is a subset $P^\# \subseteq P$ such that:
    \begin{enumerate}
        \item[(i)] if $a,b \in P^\#$ and $ab$ is defined, then $ab \in P^\#$.
        \item[(ii)] if $a \leq b$ and $a \in P^\#$, then $b \in P^\#$.
    \end{enumerate} 
    A quadruple $(P, \cdot, \leq, P^\#)$ consisting of a partial applicative poset $(P, \cdot, \leq)$ together with a filter $P^\#$ on it will be called a \emph{partial applicative structure}. A partial applicative structure will be called \emph{absolute} if $P^\# = P$.
\end{defi}    

\begin{defi}{pca} A partial applicative structure $(P, \cdot, \leq, P^\#)$ will be called a \emph{partial combinatory algebra} (or \emph{pca} for short) if there are elements $\km, \sm \in P^\#$ satisfying:
\begin{enumerate}
    \item[(1)] $\km a b$ is defined and $\km a b \leq a$;
    \item[(2)] $\sm a b \downarrow$;
    \item[(3)] if $ac(bc)$ is defined, then so is $\sm a b c$ and $\sm a b c \leq ac(bc)$,
\end{enumerate}
for all $a,b, c \in P$. A partial combinatory algebra will be called \emph{total} or \emph{discrete} if the underlying partial applicative poset is; and \emph{absolute} if it is absolute as a partial applicative structure.
\end{defi}

A fundamental result in the theory of pcas is that they are \emph{combinatory complete}. To state this property, we need the following definition.

\begin{defi}{termsoverapca} 
    The set of \emph{terms} over a pca $P$ is the smallest collection of formal expressions satisfying the following properties:
    \begin{enumerate}
        \item[(i)] any element of $P$ is a term over $P$.
        \item[(ii)] any variable is a term over $P$.
        \item[(iii)] if $s,t$ are terms over $P$, then so is $st$.
    \end{enumerate} 
\end{defi}

\begin{theo}{combcompleteness} {\rm (Combinatory completeness)}
    Let $P$ be a pca. There exists an operation which given a non-empty sequence of distinct variables $\overline{x} = x_1,\ldots,x_n$, a variable $y$ and a term $t(\overline{x}, y)$ over $P$ with all free variables among $\overline{x}, y$, produces an element $\lambda^* \overline{x},y.t \in P$, which satisfies:
    \begin{enumerate}
        \item[(i)] for any $\overline{a} \in P^n$, the expression $(\lambda^* \overline{x},y.t)\overline{a}$ is defined.
        \item[(ii)] for any $\overline{a} \in P^n$ and $b \in P$, if the expression $t(\overline{a}, b)$ is defined, then so is $(\lambda^* \overline{x},y.t)\overline{a}b$ and
        \[  (\lambda^* \overline{x},y.t)\overline{a}b \leq t(\overline{a}, b). \]
        \item[(iii)] if all constants of $t$ are elements of $P^\#$, then also $\lambda^* \overline{x},y.t \in P^\#$.
    \end{enumerate} 
\end{theo}
\begin{proof}
    In Zoethout's PhD thesis \cite{zoethout22}, this is Proposition 2.1.24.
\end{proof}

Next, let us briefly give some examples of pcas. For more examples (and theory), the reader is referred to \cite{zoethout22,vanoosten08}.
\begin{exam}{examplepca}
    \begin{enumerate}
        \item Let $\mathbb{N}$ be the set of natural numbers and $\varphi_e$ be the $e$th partial recursive function under some coding of the partial recursive functions. Then we can define an application operation, \emph{Kleene application}, on $\mathbb{N}$, by defining $m \cdot n = \varphi_m(n)$, the result of the $m$th partial recursive function on input $n$, whenever defined. This gives us an example of an absolute and discrete partial combinatory algebra, which is denoted by $K_1$, Kleene's first algebra.
        \item On the set $\mathbb{N} \to \mathbb{N}$ of functions from the natural numbers to the natural numbers we can also define a partial application function. To explain how, we need to establish some notation. 
        
        Let us fix some recursive coding of finite sequences with $[a_0,a_1,\ldots,a_{n-1}]$ in $\N$ being a code for the finite sequence $<a_0,a_1,\ldots,a_{n-1}>$. If $\alpha \in \mathbb{N}^\mathbb{N}$ and $n \in \N$, then we will write $\alpha |_n$ for $[\alpha(0),\ldots,\alpha(n-1)]$ and $[n] * \alpha$ for the function $\alpha'$ defined by $\alpha'(0) = n$ and $\alpha'(i+1) = \alpha(i)$. For $\alpha \in \N^\N$ let us define a partial function $F_\alpha: \N^\N \rightharpoonup \N$ by setting $F_\alpha(\beta) = m$ if there exists an $n \in \N$ such that:
        \begin{enumerate}
            \item[(i)] for all $i \lt n$, we have $\alpha(\beta|_i) = 0$;
            \item[(ii)] $\alpha(\beta|_n) = m + 1$.
        \end{enumerate}  
        Then the application $\alpha\beta$ for $\alpha, \beta \in \N^\N$ in Kleene's second model $K_2$ is defined as follows: we have $\alpha\beta \downarrow$ if $F_\alpha([n] * \beta)$ is defined for all $n \in \mathbb{N}$, in which case $(\alpha\beta)(n) = F_\alpha([n] * \beta)$. In this way we obtain an absolute and discrete pca. We can make it ``relative'' by choosing the set of total computable functions as a filter.
        \item As an example of a total combinatory algebra, we can consider the set of $\lambda$-terms modulo $\beta$-equivalence. This gives one an example of an absolute and discrete total combinatory algebra. We can also consider the terms of the lambda calculus with $M \leq N$ if $M \twoheadrightarrow_\beta N$, that is, if $M$ reduces to $N$ in a finite number of $\beta$-reduction steps. In that case we obtain an example of an absolute total combinatory algebra.
    \end{enumerate}
\end{exam}

\begin{theo}{impalgfromtcas}
    Let $(P, \leq, \cdot, P^\#)$ be a \emph{total} combinatory algebra. Then the collection $DP$ of downwards closed subsets of $P$ ordered by inclusion can be equipped with an implicative structure as follows:
    \[ X \to Y := \{ z \in P \, : \, (\forall x \in X) \, zx \in Y \}. \]
    The downsets $X \in DP$ containing an element from the filter form a separator on this implicative structure and hence we obtain an implicative algebra.
\end{theo}
\begin{proof}
    The proof is not difficult: the reader can have a look at the proof of \reftheo{arrowalgfrompcas} below for the idea.
\end{proof}

Unfortunately, this result does not apply to partial combinatory algebras that are not total and this will be one reason for wanting to generalise the theory of implicative algebras.

\subsection{Implicative triposes} A crucial result in the theory of implicative algebras is that each implicative algebra gives rise to a tripos (the ``implicative tripos''), from which a topos can be built using the tripos-to-topos construction (see \cite{hylandjohnstonepitts80,vanoosten08}). Let us start by defining triposes.

\begin{defi}{preHeytingalg} A \emph{preHeyting algebra} is a preorder whose poset reflection is a Heyting algebra. Put differently, it is a preorder which has finite products and coproducts and is cartesian closed as a category. We will write ${\bf preHeyt}$ for the 2-category of preHeyting algebras: its objects are preHeyting algebras, its 1-cells are functors preserving products, coproducts and exponentials and its 2-cells are natural transformations between those. Note that any two parallel 2-cells are identical, and ${\bf preHeyt}$ is preorder-enriched.
\end{defi}

\begin{defi}{tripos}
    A \emph{${\bf Set}$-based tripos} is a pseudofunctor $P: {\bf Set}^{\rm op} \to {\bf preHeyt}$ satisfying the following requirements:
    \begin{enumerate}
        \item For any function $f: J \to I$ the map $Pf: PI \to PJ$ has both a left adjoint $\exists_f$ and a right adjoint $\forall_f$ in the category of preorders. That is, for any $\alpha \in PJ$ we have elements $\exists_f(\alpha), \forall_f(\alpha) \in PI$ such that for any $\beta \in PI$ we have:
        \begin{eqnarray*}
            \exists_f (\alpha) \leq \beta & \Longleftrightarrow & \alpha \leq Pf (\beta) \\
            \beta \leq \forall_f (\alpha) & \Longleftrightarrow & Pf (\beta) \leq \alpha
        \end{eqnarray*}
        \item Moreover, these adjoints satisfy the \emph{Beck-Chevalley condition}. This condition holds if for every pullback square
        \begin{displaymath}
            \begin{tikzcd}
                L \ar[r, "g"] \ar[d, "m"] & K \ar[d, "n"] \\
                J \ar[r, "f"] & I 
            \end{tikzcd}
        \end{displaymath}
        in ${\bf Set}$, the square
        \begin{equation} \label{BCforforall}
            \begin{tikzcd}
            PL  \ar[r, "\forall_g"]  & PK \\
            PJ \ar[r, "\forall_f"] \ar[u, "Pm"] & PI \ar[u, "Pn"]
            \end{tikzcd}
        \end{equation}
        commutes in ${\bf preHeyt}$ up to natural isomorphism.
        \item There is a some set $\Sigma$ and some element $\sigma \in P\Sigma$ such that for any set $I$ and any element $\varphi \in PI$ there exists a function $[\varphi]: I \to \Sigma$ such that $\varphi \cong P[\varphi](\sigma)$. Such an element $\sigma$ is called a \emph{generic element}.
    \end{enumerate}
\end{defi}

\begin{rema}{BCforexistential} In diagram (\ref{BCforforall}), we have stated the Beck-Chevalley condition for the universal quantifier; by taking the left adjoints of all the functors in this diagram, one can show that the corresponding Beck-Chevalley condition for the existential quantifier follows from that for the universal quantifier (indeed, they are equivalent).
\end{rema}

\begin{prop}{preHeytingalgfromimpalg}
    Let $A = (A, \cleq, \to, S)$ be an implicative algebra. If we preorder $A$ as follows:
    \[ a \vdash b :\Longleftrightarrow a \to b \in S, \]
    then $(A, \vdash)$ carries the structure of a preHeyting algebra, with $\to$ being the Heyting implication in this preHeyting algebra.
\end{prop}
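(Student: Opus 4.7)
The plan is to first verify that $\vdash$ is a preorder, and then to exhibit finite meets, finite joins, and the right adjoint to meet (cartesian closure) in $(A, \vdash)$, checking that $\to$ plays the role of the Heyting implication. Throughout, every verification reduces to exhibiting a specific element of $A$ which lies in $S$ and is $\cleq$-below the implication that one needs to be in $S$; so everything is driven by the monotonicity of $\to$ together with the separator axioms (closure under modus ponens and upwards closure).

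For the preorder: reflexivity $a \vdash a$ requires $a \to a \in S$. Using the combinators, the element $\mathbf{s}\mathbf{k}\mathbf{k}$ belongs to $S$ (by modus ponens twice, since $\mathbf{k}, \mathbf{s} \in S$), and by design of the infima defining $\mathbf{s}$ and $\mathbf{k}$ and the distributivity property of $\to$ over $\bigcurlywedge$ in its right argument, $\mathbf{s}\mathbf{k}\mathbf{k} \cleq a \to a$ for every $a$; upwards closure gives $a \to a \in S$. Transitivity proceeds analogously: from $a \to b, b \to c \in S$ one constructs the ``composition'' element $\mathbf{s}(\mathbf{k}\mathbf{s})\mathbf{k}(b\to c)(a\to b) \in S$, which is $\cleq a \to c$.

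For the logical structure, I would use the standard impredicative encodings: $a \wedge b := \bigcurlywedge_c (a \to b \to c) \to c$, $a \vee b := \bigcurlywedge_c (a \to c) \to (b \to c) \to c$, $\top := \bigcurlywedge_a (a \to a)$ and $\bot := \bigcurlywedge_a a$. The projections $a \wedge b \vdash a$ and $a \wedge b \vdash b$, and dually the injections into $a \vee b$, come from instantiating the infimum at an appropriate $c$ and exhibiting the witnessing element as an application of $\mathbf{k}$'s and $\mathbf{s}$'s; for example, for the first projection one checks that some term $t(\mathbf{k},\mathbf{s})$ is $\cleq (a \wedge b) \to a$ by expanding the definition of $a \wedge b$ and using the distributivity of $\to$ over $\bigcurlywedge$. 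Pairing $c \vdash a, c \vdash b \Rightarrow c \vdash a \wedge b$ and case-analysis for the join are the same kind of bookkeeping. The top and bottom elements are verified similarly, using that $\mathbf{s}\mathbf{k}\mathbf{k} \in S$ witnesses $\top \in S$ and that $\bot \cleq a$ for every $a$ is immediate from the definition.

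Finally, to see that $\to$ is the Heyting implication, one shows $a \wedge b \vdash c$ iff $a \vdash b \to c$. Each direction is a ``curry/uncurry'' argument: from a term witnessing one inequality, one builds a term witnessing the other using only $\mathbf{k}$ and $\mathbf{s}$, and then observes that the constructed term lies in $S$ because $S$ is closed under application of its members. The main obstacle, and really the only non-bookkeeping point, is to make this production of witnesses systematic: the cleanest way is to prove once and for all a combinatory-completeness lemma for $A$ modulo $S$, stating that any element of $A$ defined by a closed $\lambda$-term over the constants $\mathbf{k}, \mathbf{s}$ lies in $S$, and that $\lambda$-abstraction of such terms respects the $\cleq$-ordering induced on $\to$. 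Once that lemma is in hand, every remaining clause of the preHeyting axioms is a one-line check by writing down the obvious $\lambda$-term witnessing the required entailment.
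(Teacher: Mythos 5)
Your proof is correct and takes essentially the same route as the paper's: the paper disposes of this proposition by citing Miquel and by proving the more general \refprop{Heytconstr} for arrow algebras, and both that argument and yours rest on impredicative encodings of the connectives together with a combinatory-completeness lemma guaranteeing that $\lambda$-term witnesses land in $S$ --- just note that the lemma must allow constants ranging over all of $S$ (not only $\mathbf{k}$ and $\mathbf{s}$), or else be combined with closure of $S$ under application, as you indicate. The only substantive difference is that in the implicative setting the axiom $a \to \bigcurlywedge_{b \in B} b = \bigcurlywedge_{b \in B}(a \to b)$ restores the full adjunction $ab \cleq c \Leftrightarrow a \cleq b \to c$, so your undecorated encodings $\bigcurlywedge_c (a \to b \to c) \to c$ and $\bigcurlywedge_c (a \to c) \to (b \to c) \to c$ work as written, whereas the paper's more general proof must insert $\partial$'s throughout.
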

\begin{proof}
    See \cite[Proposition 3.21]{miquel20}. We will prove a more general statement in \refprop{Heytconstr} below.
\end{proof}

\begin{rema}{logicalordering}
    We refer to $\vdash$ as the \emph{logical ordering} on $A$, as opposed to the evidential ordering $\cleq$. As mentioned before, the logical ordering is a derived, and weaker, concept. It is weaker because one can show that ${\bf i} = \bigcurlywedge_{a \in A} a \to a \in S$ (see \refcoro{othercombinators} below). Therefore $a \to a \in S$ for any $a \in A$, and, because $\to$ is monotone in the second argument, we have that $a \cleq b$ implies $a \to a \cleq a \to b \in S$.
\end{rema}

If $A = (A, \cleq, \to)$ is an implicative structure and $I$ is a set, then we can give $A^I$, the set of functions from $I$ to $A$, an implicative structure by choosing the pointwise ordering and implication. If $S$ is a separator on $A$, then we can define two separators on $A^I$:
\begin{eqnarray*}
    \varphi: I \to A \in S_I & :\Leftrightarrow & (\forall i \in I) \, \varphi(i) \in S \\
    \varphi: I \to A \in S^I & :\Leftrightarrow & \bigcurlywedge_{i \in I} \varphi(i) \in S
\end{eqnarray*}
Clearly, $S^I \subseteq S_I$, since separators are upwards closed. For our purposes, the first separator $S_I$ plays no role, while the second separator $S^I$ will be the relevant one. Indeed, this is the one which is used in the following fundamental result about implicative algebras.

\begin{theo}{imptripos} {\rm (Implicative tripos)}
Let $A = (A, \cleq, \to, S)$ be an implicative algebra. If to each set $I$ we assign the preHeyting algebra $PI = (A^I, \vdash_{S^I})$, while to each function $f: J \to I$ we assign the mapping $PI \to PJ$ sending a function $\varphi: I \to A$ to the composition $\varphi \circ f: J \to A$, then $P$ is a functor $P: {\bf Set}^{\rm op} \to {\bf preHeyt}$. Indeed, $P$ is a tripos whose generic element is the identity function $1_A: A \to A \in PA$.
\end{theo}
\begin{proof}
    See \cite[Theorem 4.11]{miquel20}. We will prove a more general statement in \reftheo{arrowtripos} below.
\end{proof}

\section{Arrow algebras}

As we mentioned in the introduction, we have two reasons for wanting to generalise implicative algebras. The first is to have more examples and the second is to ensure better closure properties, in particular, closure under subalgebras given through nuclei. We will show in this section that by generalising implicative algebras to arrow algebras we can address both these points.

The notion of an arrow algebra is obtained from that of an implicative algebra by modifying the second axiom for an implicative structure, which says that:
\[ a \to \bigcurlywedge_{b \in B} b = \bigcurlywedge_{b \in B} (a \to b), \]
if $a \in A, B \subseteq A$. Since 
\begin{equation} \label{theemptyone} 
    a \to \bigcurlywedge_{b \in B} b \cleq \bigcurlywedge_{b \in B} (a \to b) 
\end{equation}
holds as soon as $\to$ is monotone in the second argument (a property which we wish to preserve), this axiom is equivalent  to the inequality:
\begin{equation} \label{themeatyone} 
     \bigcurlywedge_{b \in B} (a \to b) \cleq a \to \bigcurlywedge_{b \in B} b.
\end{equation}
We will modify this axiom by making two changes to it:
\begin{enumerate}
    \item[(i)] We only require that the inequality (\ref{themeatyone}) holds for the logical ordering.
    \item[(ii)] We require that all the elements in $B$ are implications (that is, of the form $x \to y$ for some $x, y \in A$).
\end{enumerate}
So what we will do is that first we will define a new combinator:
\begin{eqnarray*} {\bf a} & := & \bigcurlywedge_{a \in A, B \subseteq {\rm Im}(\to)} (\bigcurlywedge_{b \in B} a \to b) \to a \to \bigcurlywedge_{b \in B} b \\
& = & \bigcurlywedge_{x,I,(y_i)_{i \in I}, (z_i)_{i \in I}} \big( \, \bigcurlywedge_{i \in I} x \to y_i \to z_i \, \big) \to x \to \big( \, \bigcurlywedge_{i \in I} y_i \to z_i \, \big) \end{eqnarray*}
where ${\rm Im}(\to)$ stands for the image of the implication operation $A \times A \to A$. Then we will drop the second axiom for an implicative structure and replace this with the requirement that ${\bf a} \in S$.

\begin{rema}{secondmodificationwhybelow}
    We will say more about our reasons for making the second modification (ii) in \refrema{simplerdefinitionnotOK} below.
\end{rema}

\subsection{Definition} These ideas lead to the following definitions.

\begin{defi}{arrowstr}
An \emph{arrow structure} is a complete poset $(A, \cleq)$ together with a binary operation $\to: A \times A \to A$ which is anti-monotone in the first and monotone in the second argument. That is, it satisfies the following condition:
        \begin{quote}
            If $a' \cleq a$ and $b \cleq b'$ then $a \to b \cleq a' \to b'$.
        \end{quote}
\end{defi}

\begin{defi}{separatorforarrowstr} Let $A = (A, \cleq, \to)$ be an arrow structure. A \emph{separator} on $A$ is a subset $S \subseteq A$ such that the following are satisfied:
        \begin{enumerate}
            \item[(1)] (Upwards closed) If $a \in S$ and $a \cleq b$, then $b \in S$.
            \item[(2)] (Closure under modus ponens) If $a \to b \in S$ and $a \in S$, then $b \in S$.
            \item[(3)] $S$ contains the combinators ${\bf k}$, ${\bf s}$ and ${\bf a}$.
        \end{enumerate}  
\end{defi}

\begin{defi}{arrowalg}
    An \emph{arrow algebra} is a quadruple $(A, \cleq, \to, S)$ where $(A, \cleq, \to)$ is an arrow structure and $S$ is a separator on that arrow structure.
\end{defi}

A fundamental property of arrow algebras is the following:

\begin{prop}{inttautinsep} Let $A$ be an arrow algebra with separator $S$. Suppose $\varphi$ is a propositional formula built from propositional variables $p_1,\dots, p_n$ and implications only. If $\varphi$ is an intuitionistic tautology, then $$\bigcurlywedge_{p_1,\dots,p_n\in A}\varphi\in S.$$
\end{prop}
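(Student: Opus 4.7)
I would induct on a Hilbert-style derivation of $\varphi$, using the axiom schemes $\alpha\to\beta\to\alpha$ (K) and $(\alpha\to\beta\to\gamma)\to(\alpha\to\beta)\to(\alpha\to\gamma)$ (S) together with modus ponens, which is complete for the implicational fragment of IPC. Fix $n$ large enough to accommodate every variable occurring anywhere in the derivation; I then prove by induction on derivation length that $\bigcurlywedge_{p_1,\dots,p_n\in A}\psi\in S$ for every formula $\psi$ that appears.

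\textbf{Axiom cases and modus ponens.} If $\psi$ is a K-instance $\alpha\to\beta\to\alpha$, then the family $\{\alpha[\bar p]\to\beta[\bar p]\to\alpha[\bar p]:\bar p\in A^n\}$ is a subfamily of $\{a\to b\to a : a,b\in A\}$, hence
\[ \bigcurlywedge_{\bar p\in A^n}\psi[\bar p] \;\cgeq\; \bigcurlywedge_{a,b\in A} a\to b\to a \;=\; \km \,\in\, S, \]
and upwards closure of $S$ concludes the case; the S-axiom case is analogous, using $\sm$. For the modus ponens step, suppose $\bigcurlywedge_{\bar p}\chi[\bar p]\in S$ and $\bigcurlywedge_{\bar p}(\chi\to\varphi)[\bar p]\in S$ by the induction hypothesis. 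A formula in the $\to$-fragment is either a variable or an implication, and since no variable is a tautology, every formula appearing in the derivation, in particular $\varphi$, has the form $\varphi_1\to\varphi_2$. Set $c:=\bigcurlywedge_{\bar p}\chi[\bar p]\in S$. Since $c\cleq\chi[\bar p]$ for every $\bar p$, anti-monotonicity of $\to$ in its first argument gives $\chi[\bar p]\to\varphi[\bar p]\cleq c\to\varphi[\bar p]$ pointwise, so $\bigcurlywedge_{\bar p}(c\to\varphi[\bar p])\in S$ by upwards closure. Instantiating $\am$ at $x=c$, $I=A^n$, $(y_{\bar p},z_{\bar p})=(\varphi_1[\bar p],\varphi_2[\bar p])$ yields
\[ \am \;\cleq\; \bigl(\bigcurlywedge_{\bar p} c\to\varphi[\bar p]\bigr)\to c\to\bigcurlywedge_{\bar p}\varphi[\bar p], \]
so this implication lies in $S$. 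Two applications of closure under modus ponens in $S$, discharging first $\bigcurlywedge_{\bar p}c\to\varphi[\bar p]$ and then $c$, deliver $\bigcurlywedge_{\bar p}\varphi[\bar p]\in S$.

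\textbf{Main obstacle.} The only delicate point is the modus ponens step, where one has to swap a meet of implications with an implication whose codomain is a meet. This is precisely what $\am$ accomplishes, but $\am$ only fires when the indexed elements are themselves implications; that is exactly why the syntactic restriction to the pure $\to$-fragment---combined with the observation that no propositional variable can be a tautology---is what makes the induction go through.
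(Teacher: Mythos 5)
Your proof is correct and follows essentially the same route as the paper: induction on a Hilbert derivation using the completeness of $\mathbf{K}$, $\mathbf{S}$ and modus ponens for the implicational fragment, handling axiom instances via $\km,\sm$ and upward closure, and handling the modus ponens step by observing the conclusion must be an implication and invoking $\am$. The only cosmetic difference is that the paper isolates your modus ponens argument as a standalone lemma (closure of separators under this ``limited'' modus ponens for meets of implications), whereas you inline the same computation.
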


The proof of this proposition relies on the following lemma, which also has many other useful applications and uses the assumption that the combinator $\bf a$ belongs to the separator.

\begin{lemm}{sepclosedunderlimitedMP} Suppose $A$ is an arrow algebra with separator $S$ and \[ (x_i)_{i \in I}, (y_i)_{i \in I}, (z_i)_{i \in I} \] are $I$-indexed families of elements from $A$. If
\begin{quote}
    $\bigcurlywedge_{i\in I} x_i\to y_i\to z_i\in S$ and $\bigcurlywedge_{i\in I} x_i\in S$,
\end{quote} 
then also $\bigcurlywedge_{i\in I}y_i\to z_i\in S$.
\end{lemm}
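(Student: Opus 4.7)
The plan is to exploit the combinator $\bf a$ together with upward closure and closure under modus ponens. The obstacle to a direct application is that $\bf a$ only handles a \emph{single} antecedent $x$, while the hypothesis involves a family $(x_i)_{i\in I}$. The trick is therefore to first ``collapse'' the family of antecedents into the single element $u := \bigcurlywedge_{i\in I} x_i$, and then apply $\bf a$.

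Concretely, I would proceed in four steps. First, set $u := \bigcurlywedge_{i \in I} x_i$; by the second hypothesis $u \in S$. Second, since $u \cleq x_i$ for each $i$ and $\to$ is anti-monotone in its first argument, we have
\[ x_i \to y_i \to z_i \cleq u \to y_i \to z_i \]
for every $i \in I$, and taking the meet over $i$ gives
\[ \bigcurlywedge_{i \in I}(x_i \to y_i \to z_i) \cleq \bigcurlywedge_{i \in I}(u \to y_i \to z_i). \]
By the first hypothesis and upwards closure of $S$, we conclude $\bigcurlywedge_{i \in I}(u \to y_i \to z_i) \in S$.

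Third, instantiate the combinator $\bf a$ at this single $x = u$ together with the family $(y_i, z_i)_{i \in I}$. Note that the elements $y_i \to z_i$ lie in $\mathrm{Im}(\to)$, so this is a legitimate instance of $\bf a$. By definition of $\bf a$ as a meet we have
\[ {\bf a} \cleq \bigl( \bigcurlywedge_{i \in I} u \to y_i \to z_i \bigr) \to u \to \bigl( \bigcurlywedge_{i \in I} y_i \to z_i \bigr), \]
and since $\bf a \in S$ and $S$ is upward closed, the right-hand side also belongs to $S$. Fourth, apply modus ponens twice: once against $\bigcurlywedge_{i \in I}(u \to y_i \to z_i) \in S$ to obtain $u \to \bigcurlywedge_{i \in I}(y_i \to z_i) \in S$, and once more against $u \in S$ to conclude $\bigcurlywedge_{i \in I}(y_i \to z_i) \in S$, as required.

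The main (modest) obstacle is recognising that $\bf a$, as stated, quantifies over a single $x$ rather than an $I$-indexed family, so one must reduce to that case by passing to the meet $u = \bigcurlywedge_i x_i$; once this reduction is in place the rest is a mechanical composition of upward closure, anti-monotonicity of $\to$, and two applications of modus ponens.
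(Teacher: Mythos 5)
Your proposal is correct and follows essentially the same route as the paper's own proof: both collapse the antecedents to $u = \bigcurlywedge_{i} x_i$ via anti-monotonicity of $\to$ and upward closure of $S$, then instantiate $\mathbf{a}$ at this single $u$ and finish with two applications of modus ponens. Nothing is missing.
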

\begin{proof} Suppose $\bigcurlywedge_{i\in I} x_i\to y_i\to z_i\in S$ and $\bigcurlywedge_{i\in I} x_i\in S$. Because $\bigcurlywedge_{i\in I}x_i\preccurlyeq x_i$ for each $i\in I$ and separators are upwards closed, the first assumption implies that $$\bigcurlywedge_{i\in I}(\bigcurlywedge_{i\in I}x_i)\to y_i\to z_i\in S.$$  In addition, we have $$\mathbf{a}\preccurlyeq\big(\bigcurlywedge_{i\in I}(\bigcurlywedge_{i\in I}x_i)\to y_i\to z_i\big)\to (\bigcurlywedge_{i\in I}x_i)\to \bigcurlywedge_{i\in I}y_i\to z_i\in S.$$ Therefore we obtain the desired result by using that separators are closed under modus ponens.
\end{proof}

\begin{proof} (Of \refprop{inttautinsep}.)
    It is well-known that the axioms $A\to (B\to A)$ and $(A\to B\to C)\to (A\to B)\to (A\to C)$ together with modus ponens are complete for the implicative fragment of intuitionistic propositional logic (see, for instance, \cite[Theorem 5.1.10]{sorensenurzyczyn06}).

    This means that we can prove the statement in \refprop{inttautinsep} by induction on the number of applications of modus ponens used in the derivation of the tautology $\varphi$. Note that no propositional variable is a tautology, and therefore any tautology must be an implication $\varphi=\alpha\to\beta$.

    For the base case we are done by observing that $\mathbf{k},\mathbf{s}\in S$.
    
    So suppose $\varphi=\alpha\to\beta$ was constructed from the tautologies $\psi$ and $\psi\to\alpha\to\beta$ using the Modus Ponens rule. The induction hypothesis gives us that $\bigcurlywedge_{\overline{p}\in A}\psi,\bigcurlywedge_{\overline{p}\in A}\psi\to\alpha\to\beta\in S$. Therefore the desired result follows from \reflemm{sepclosedunderlimitedMP}.
\end{proof}

\begin{coro}{othercombinators}
    Let $S$ be a separator in an arrow algebra $A$. Then the following combinators belong to $S$:
    \begin{eqnarray*}
        {\bf i} & := & \bigcurlywedge_{a \in A} a \to a \\
        {\bf b} & := & \bigcurlywedge_{a,b,c \in A} (b \to c) \to (a \to b) \to (a \to c)
    \end{eqnarray*}
\end{coro}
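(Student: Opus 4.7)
The plan is to invoke \refprop{inttautinsep} directly: both combinators $\mathbf{i}$ and $\mathbf{b}$ are infima (over all substitution instances) of propositional formulas in the implicational fragment, so it suffices to check that the underlying formulas are intuitionistic tautologies.

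First, I would observe that $\mathbf{i}$ is $\bigcurlywedge_{a \in A} a \to a$, which is the realization in $A$ of the propositional formula $p \to p$. This is trivially an intuitionistic tautology (e.g., derivable from $\mathbf{k}$ and $\mathbf{s}$ alone, namely $p \to p$ follows from applying modus ponens to the axiom $(p \to ((p \to p) \to p)) \to (p \to (p \to p)) \to (p \to p)$ and the two axioms $p \to ((p\to p) \to p)$ and $p \to (p\to p)$). Hence \refprop{inttautinsep} gives $\mathbf{i} \in S$.

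Next, I would treat $\mathbf{b}$ analogously. The underlying formula is $(q \to r) \to (p \to q) \to (p \to r)$, the standard composition tautology of the implicational fragment of intuitionistic propositional logic. This is well known to be derivable using only $\mathbf{k}$, $\mathbf{s}$, and modus ponens, so \refprop{inttautinsep} applies and yields $\mathbf{b} \in S$.

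There is essentially no obstacle: the entire content of the proof has been packaged into the preceding proposition, and the corollary is just pointing out two specific instances. The only mildly non-trivial step is to convince oneself (or the reader) that $(q \to r) \to (p \to q) \to (p \to r)$ is indeed derivable from $\mathbf{k}$ and $\mathbf{s}$ by modus ponens alone; this is standard and is covered by the reference to \cite[Theorem 5.1.10]{sorensenurzyczyn06} already cited in the proof of \refprop{inttautinsep}.
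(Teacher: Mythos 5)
Your proposal is correct and is exactly the argument the paper intends: the corollary is stated as an immediate consequence of \refprop{inttautinsep}, applied to the implicational tautologies $p \to p$ and $(q \to r) \to (p \to q) \to (p \to r)$, and the paper gives no further proof. Your additional remarks on derivability from $\mathbf{k}$ and $\mathbf{s}$ via modus ponens are accurate and already covered by the completeness result cited in the proof of that proposition.
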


Many arrow algebras satisfy the following additional property. 

\begin{defi}{compwithjoins} Let $(A, \cleq, \to)$ be an arrow structure. If for any $a \in A$ and $B \subseteq A$, the equation
        \[ (\bigcurlyvee_{b \in B} b) \to a = \bigcurlywedge_{b \in B} (b \to a) \]
holds, we say that the arrow structure is \emph{compatible with joins}. An arrow algebra is \emph{compatible with joins} if its underlying arrow structure is.
\end{defi}

Although the theory of arrow algebras works perfectly well without assuming that they are compatible with joins, it is still worthwhile to mention this property because it is satisfied in many examples (such as locales) and it can occasionally lead to some simplifications (see \reflemm{CompExt} below).

\subsection{Examples} Let us now give some examples of arrow algebras.

\subsubsection{Implicative algebras} Every implicative algebra is also an arrow algebra. The reason is that in an implicative algebra the inequality (\ref{themeatyone}) holds and therefore we have that:
\[ {\bf i} = \bigcurlywedge_{c \in A} c \to c \cleq \bigcurlywedge_{c, d \in A, c \cleq d} c \to d \cleq \bigcurlywedge_{a \in A, B \subseteq {\rm Im}(\to)} (\bigcurlywedge_{b \in B} a \to b) \to a \to \bigcurlywedge_{b \in B} b = {\bf a} \in S. \]
That is to say that for implicative structures any separator in the sense of \refdefi{separator} also contains the ${\bf a}$-combinator; therefore \refdefi{separator} is consistent with \refdefi{separatorforarrowstr} and any implicative algebra is also an arrow algebra.

\subsubsection{Pcas} Since every implicative algebra is an arrow algebra, we know that the downsets in a total combinatory algebra form an arrow algebra. However, we are now also able to cover the partial case.

\begin{theo}{arrowalgfrompcas}
    Let $(P, \leq, \cdot, P^\#)$ be a partial combinatory algebra. Then the collection $DP$ of downwards closed subsets of $P$ ordered by inclusion can be equipped with an arrow structure as follows:
    \[ X \to Y := \{ z \in P \, : \, (\forall x \in X) \, zx \downarrow \mbox{ and } zx \in Y \, \}. \]
    The downsets $X \in DP$ containing an element from the filter form a separator on this arrow structure and hence we obtain an arrow algebra. This arrow algebra is compatible with joins.
\end{theo}
\begin{proof}
    That $\to$ defines an arrow structure which is compatible with joins is easy to see; therefore we only show that
    \[ S := \{ X \in DP \, : \, (\exists x \in X) \, x \in P^\# \} \]
    defines a separator.

    \begin{enumerate}
        \item $S$ is clearly upwards closed.
        \item $S$ is closed under modus ponens: Suppose $X\to Y, X\in S$. Then there exist $x,z\in P^\#$ with $x\in X$ and $z\in X\to Y$. By the definition of implication, $zx \downarrow$ and $zx\in Y$ and by the definition of a filter, $zx \in P^\#$. Hence $Y\in S$.
        \item So it remains to show that the combinators ${\bf k}, {\bf s}, {\bf a} \in S$. Let $\underline{k}, \underline{s} \in P^\#$ be the combinators from the pca $P$ as in \refdefi{pca}. Then $\underline{k} \in {\bf k}$, because:
        \begin{eqnarray*}
            \mathbf{k} & = & \bigcap_{X,Y\in DP} X\to (Y\to X)\\
            & = & \bigcap_{X,Y\in DP}\{z\in P\mid \forall x\in X (zx\downarrow\text{and }zx\in Y\to X)\}\\
            & = & \bigcap_{X,Y\in DP}\{z\in P\mid \forall x\in X \, \big( \, zx\downarrow\text{and } \forall y \in Y \, ( \, zxy \downarrow \text{and } zxy \in  X \, ) \, \big) \, \} \\ & \ni &
            \underline{k}
        \end{eqnarray*}
    One shows $\underline{s}\in \mathbf{s}$ in a similar fashion.

    To prove $\mathbf{a}\in S$ we use \reftheo{combcompleteness}. We take \[ t:=\lambda^* uv, w.uvw\in P^\# \] and claim $t\in \mathbf{a}$. This holds iff for all $I, X,(Y_i)_{i \in I}, (Z_i)_{i \in I }\in DP$ and for all $s\in\bigcap_{i}X\to Y_i\to Z_i$, we have $ts\downarrow$ and $ts\in X\to \bigcap_i Y_i\to Z_i$.

Let $s\in\bigcap_i X\to Y_i\to Z_i$ and $x\in X$. We have $ts\downarrow$ and $tsx\downarrow$. Moreover, for any $i\in I$ and $y\in Y_i$ we have $sxy\downarrow$, so $tsxy\leq sxy\in Z_i$. Because $Z_i$ is downwards closed, we have $tsxy\in Z_i$ and we are done.
\end{enumerate}
\end{proof}

\subsubsection{PERs} The downset construction we saw above is the simplest way of constructing an arrow algebra out of a pca. However, there are many more. We will discuss one more example here, that of PERs (partial equivalence relations) over a pca $P$. 

If $P$ is a partial applicative poset, then so is $P \times P$ and therefore $D(P \times P)$ is an arrow structure. More explicitly, the elements of $D(P \times P)$ are downwards closed binary relations on $P$ which are ordered by inclusion and carry the following implication:
$$R\to S:=\{\, (x,x')\in P^2 \, \mid \, \forall(y,y')\in R \,\big( \, xy\downarrow, x'y'\downarrow, (xy,x'y')\in S \, \big) \, \}.$$

\begin{theo}{PERsasanarrowalgebra}
Let $(P,\cdot,\leq)$ be a partial applicative poset and write $${\rm PER}(P):=\{ \, R\in D(P\times P) \, \mid \, R\text{ is symmetric and transitive} \, \}$$ for the set of all binary relations on $P$ which are downwards closed, symmetric and transitive. If we order ${\rm PER}(P)$ by inclusion and equip it with the implication defined above, we obtain a arrow structure which is compatible with joins. Moreover if $(P,P^\#,\cdot,\leq)$ is a pca, defining $$S:=\{ \, R\in{\rm PER}(P) \, \mid \, \exists (x,y)\in R \, \big( \, x,y\in P^\# \, \big) \, \}$$ gives us a separator on this arrow structure.
\end{theo}
\begin{proof}
    One readily verifies that the elements in ${\rm PER}(P)$ are closed under implication and arbitrary intersections, so we can conclude that ${\rm PER}(P)$ does indeed carry an arrow structure.

    To see that it is compatible with joins, we first observe that the join is given by the transitive closure of the union. 
    We only need to show $$(\bigcurlyvee_{i\in I} R_i)\to S\succcurlyeq\bigcurlywedge_{i\in I}(R_i\to S)$$ as the other inequality is always true. So let $(x,x')\in \bigcurlywedge_{i\in I}(R_i\to S)$ and $(a,a')\in\bigcurlyvee_{i\in I} R_i$. The latter implies that there are elements $a_0,a_1,\dots, a_n\in A$ such that $(a_k,a_{k+1})\in R_{i_k}$ for $i_k\in I$, $k \lt n$ and additionally $a_0=a$ and $a_n=a'$. By symmetry and transitivity $(x',x')\in \bigcurlywedge_{i\in I}(R_i\to S)$, so $(xa_0,x'a_1),(x'a_k,x'a_{k+1})\in S$. By transitivity $(xa,x'a')=(xa_0,x'a_n)\in S$.
    
    It remains to prove that $S$ is a separator. This can be shown by an argument similar to that in the proof of \reftheo{arrowalgfrompcas} with $(\underline{k},\underline{k})\in\mathbf{k}$, $(\underline{s},\underline{s})\in\mathbf{s}$ and $(t,t)\in\mathbf{a}$.
\end{proof}

\subsection{Nuclei} Another advantage of arrow algebras is that they are closed under subalgebras, where these subalgebras are determined by \emph{nuclei}, in a manner which is a direct generalisation of the theory of nuclei in locale theory.

\begin{defi}{nucleus}
    Let $A = (A, \cleq, \to, S)$ be an arrow algebra. A mapping $j: A \to A$ will be called a \emph{nucleus} if the following three properties are satisfied:
            \begin{enumerate}
                \item[(1)] $a \cleq b$ implies $ja \cleq jb$ for all $a, b \in A$.
                \item[(2)] $\bigcurlywedge_{a \in A} a \to ja \in S$.
                \item[(3)] $\bigcurlywedge_{a, b \in A} (a \to jb) \to (ja \to jb) \in S$.
            \end{enumerate}   
\end{defi}

\begin{exam}{examofnuclei}
    Let $A$ be an arrow algebra and $c \in A$. It follows from \refprop{inttautinsep} that the following define nuclei on $A$:
    \begin{enumerate}
        \item[(i)] $jx = c \to x$.
        \item[(ii)] $jx = (x \to c) \to c$.
        \item[(iii)] $jx = (x \to c) \to x$. 
    \end{enumerate}  
    (See \cite{vandenberg19} for more information.)
\end{exam}

\begin{lemm}{nuclproperties}
    Let $j$ be a nucleus on an arrow algebra $A$ with separator $S$. Then we have the following:
    \begin{enumerate}
        \item[(i)] $\bigcurlywedge_{a\in A} jja\to ja\in S$.
        \item[(ii)] $\bigcurlywedge_{a,b\in A} (a\to b)\to (ja\to jb)\in S$.
        \item[(iii)] $\bigcurlywedge_{a,b\in A}j(a\to b)\to (ja\to jb)\in S$.
    \end{enumerate}
\end{lemm}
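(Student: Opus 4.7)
My plan for all three parts is to combine the defining conditions (2) and (3) of a nucleus with intuitionistic propositional tautologies via \refprop{inttautinsep}, and to chain the resulting implications using \reflemm{sepclosedunderlimitedMP}.

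For (i), the cleanest approach is to specialise condition (3) at the index pair $(ja,a)$, yielding $\bigcurlywedge_a(ja\to ja)\to(jja\to ja)\in S$. Since $\mathbf{i}\in S$ by \refcoro{othercombinators} and $\mathbf{i}\cleq\bigcurlywedge_a(ja\to ja)$, the antecedent conjunction lies in $S$, so \reflemm{sepclosedunderlimitedMP} applied with $x_a:=ja\to ja$, $y_a:=jja$, $z_a:=ja$ delivers the claim.

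For (ii), I would first prove $\bigcurlywedge_{a,b}(a\to b)\to(a\to jb)\in S$ by applying \refprop{inttautinsep} to the tautology $(p\to q)\to((r\to p)\to(r\to q))$, specialising $(p,q,r):=(b,jb,a)$, and combining with condition (2) via \reflemm{sepclosedunderlimitedMP}. Then, chaining this with the instance of condition (3) $\bigcurlywedge_{a,b}(a\to jb)\to(ja\to jb)\in S$ by means of the transitivity combinator $\mathbf{b}$ from \refcoro{othercombinators} together with a second application of \reflemm{sepclosedunderlimitedMP} produces $\bigcurlywedge_{a,b}(a\to b)\to(ja\to jb)\in S$.

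Part (iii) will be the main obstacle. Axiom (3) only directly outputs targets of the shape $ju\to jv$ with $u$ a given variable, so to move a $j$ past an arrow one must apply (3) with $u$ itself an implication. Concretely, my plan is: instantiate (3) at $(a\to b,b)$ to obtain $\bigcurlywedge_{a,b}((a\to b)\to jb)\to(j(a\to b)\to jb)\in S$; derive $\bigcurlywedge_{a,b}a\to((a\to b)\to jb)\in S$ from condition (2) and the tautology $(q\to r)\to(p\to(p\to q)\to r)$ via \refprop{inttautinsep} and \reflemm{sepclosedunderlimitedMP}; chain these two by transitivity to obtain $\bigcurlywedge_{a,b}a\to(j(a\to b)\to jb)\in S$; swap the antecedents via the tautology $(p\to q\to r)\to(q\to p\to r)$ to reach $\bigcurlywedge_{a,b}j(a\to b)\to(a\to jb)\in S$; and finally chain with condition (3) at $(a,b)$, namely $\bigcurlywedge_{a,b}(a\to jb)\to(ja\to jb)\in S$, to arrive at the desired conclusion. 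Apart from identifying this conceptual move, the remaining work is routine bookkeeping to line up the shapes $x_i\to y_i\to z_i$ with the hypotheses of \reflemm{sepclosedunderlimitedMP} at each chaining step.
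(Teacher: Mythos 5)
Your proposal is correct and follows essentially the same route as the paper: part (i) is identical, part (ii) uses the same decomposition via $\mathbf{b}$ and the same tautology, and part (iii) hinges on the same key move of instantiating nucleus axiom (3) at the pair $(a\to b,\,b)$, differing only in that you build $\bigcurlywedge_{a,b}\,a\to((a\to b)\to jb)$ directly from axiom (2) rather than starting from part (ii) and swapping antecedents. The bookkeeping with \reflemm{sepclosedunderlimitedMP} goes through exactly as you indicate.
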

\begin{proof} Recall that we have $\mathbf{i}, \mathbf{b} \in S$, where $\mathbf{i}$ and $\mathbf{b}$ were defined in \refcoro{othercombinators}.
    \begin{enumerate}
        \item[(i)] We have $\bigcurlywedge_a ja\to ja\in S$, because $\mathbf{i} \in S$, and $\bigcurlywedge_a (ja\to ja)\to (jja\to ja)\in S$ by the definition of a nucleus. So $\bigcurlywedge_{a}jja\to ja\in S$ follows from \reflemm{sepclosedunderlimitedMP}.
        
        \item[(ii)] Because $\mathbf{b}\in S$ we get \begin{align*} & \bigcurlywedge_{a,b\in A}\big( \, (a\to b)\to(a\to jb) \, \big)\to \big( \, (a\to jb)\to (ja\to jb) \, \big) \\& \to\big( \, (a\to b)\to (ja\to jb) \, \big)\in S.\end{align*}
        Therefore $\bigcurlywedge(a\to b)\to (ja\to jb)\in S$ will follow as soon as we can show that $\bigcurlywedge (a\to b)\to (a\to jb)\in S$ and $\bigcurlywedge(a\to jb)\to (ja\to jb)\in S$. The latter is true by definition of a nucleus. For the former we note that $\bigcurlywedge_{a,b} (b \to jb) \to (a\to b)\to (a\to jb)\in S$ by \refprop{inttautinsep} and $\bigcurlywedge_{b \in B} b \to jb \in S$ by the definition of a nucleus. Therefore the desired result follows from \reflemm{sepclosedunderlimitedMP}.

        \item[(iii)] We repeatedly apply \refprop{inttautinsep} and \reflemm{sepclosedunderlimitedMP}. From the conclusion of (ii)$$\bigcurlywedge (a\to b)\to (ja\to jb)\in S,$$ we obtain $$
        \bigcurlywedge ja\to (a\to b)\to jb\in S.$$Using the definition of a nucleus, this implies $$\bigcurlywedge ja \to j(a\to b)\to jb\in S$$ and hence $$ \bigcurlywedge j(a\to b)\to (ja\to jb)\in S. $$
    \end{enumerate}
\end{proof}

\begin{rema}{intuitionisticreasoning} In the sequel we will often say that something follows by ``intuitionistic reasoning'' if it can be deduced from the combination of \refprop{inttautinsep} and \reflemm{sepclosedunderlimitedMP}.
\end{rema}

In locale theory we would obtain a new locale (a sublocale) by restricting to the elements $a \in A$ for which $ja = a$. For arrow algebras we have the following construction.

\begin{prop}{subarrowalgebras}
Let $(A, \cleq, \to, S)$ be an arrow algebra and $j: A \to A$ be a nucleus on it. Then $A_j = (A, \cleq, \to_j, S_j)$ with
            \begin{eqnarray*}
                a \to_j b & :\equiv & a \to jb \\
                a \in S_j & :\Leftrightarrow & ja \in S
            \end{eqnarray*}
            is also an arrow algebra. If $A$ is compatible with joins, then so is $A_j$.
\end{prop}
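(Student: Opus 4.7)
The plan is to verify, one by one, the axioms for $(A, \cleq, \to_j, S_j)$ to be an arrow algebra. The organizing intuition is that nucleus conditions (2) and (3) say precisely that $j$ behaves as a monad on $A$ (with unit $\bigcurlywedge_a a \to ja \in S$ and Kleisli extension $\bigcurlywedge_{a,b}(a \to jb) \to (ja \to jb) \in S$); many of the verifications amount to lifting known facts through this monadic structure.

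The arrow-structure axioms for $(A, \cleq, \to_j)$ are immediate: antimonotonicity in the first argument is inherited from $\to$, and monotonicity in the second combines monotonicity of $\to$ with nucleus condition (1). Compatibility with joins (when present in $A$) transfers directly because $\to_j$ does not alter the first argument, so $(\bigcurlyvee_b b) \to_j a = \bigcurlywedge_b (b \to_j a)$. For the separator axioms of $S_j$, upwards closure follows from monotonicity of $j$, and closure under modus ponens in $\to_j$ boils down to showing that $j(a \to jb), ja \in S$ together force $jb \in S$; here I would apply Lemma~\ref{nuclproperties}(iii) with its second variable set to $jb$ to obtain $ja \to jjb \in S$ (via two uses of modus ponens in $S$), and then collapse $jjb$ to $jb$ using Lemma~\ref{nuclproperties}(i).

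The main work is showing that the combinators $\km_j, \sm_j, \am_j$ belong to $S_j$. A useful preliminary observation is the inclusion $S \subseteq S_j$: given $a \in S$, nucleus condition (2) yields $a \to ja \in S$ and modus ponens gives $ja \in S$. It therefore suffices to show that each new combinator lies in the stronger set $S$. For $\km_j = \bigcurlywedge_{a,b} a \to j(b \to ja)$, the plan is to start from $\km \in S$ and perform two successive ``post-compositions'' with the unit: each step uses the combinator $\bf b$ from Corollary~\ref{othercombinators}, the unit $\bigcurlywedge_c c \to jc \in S$, and Lemma~\ref{sepclosedunderlimitedMP} to insert a $j$ into a nested position inside a uniform statement already known to be in $S$. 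The same pattern, with additional appeals to Lemma~\ref{nuclproperties}(ii) for the monotonicity of $j$ in deeper positions, produces $\sm_j \in S$.

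The main technical obstacle will be $\am_j$, because it is defined using $\to_j$ itself and quantifies over $B \subseteq \mathrm{Im}(\to_j)$. The essential point is that every $b \in B$ already has the form $x \to jy$, so Lemma~\ref{nuclproperties} (particularly parts (i) and (iii)) makes the various occurrences of $jb$ tractable; combined with the inclusion $\mathrm{Im}(\to_j) \subseteq \mathrm{Im}(\to)$, the verification can be reduced to the original axiom $\am \in S$ using the same Kleisli-extension strategy as for $\km_j$ and $\sm_j$. No new tools beyond Lemmas~\ref{sepclosedunderlimitedMP} and~\ref{nuclproperties} should be required; the challenge here is purely organizational.
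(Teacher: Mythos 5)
Your proposal is correct and follows essentially the same route as the paper's proof: the same reduction of modus ponens for $S_j$ via Lemma~\ref{nuclproperties}(iii) and (i), the same observation that $S \subseteq S_j$ so that it suffices to put the new combinators in $S$, and the same key point for $\am_j$ that the relevant family consists of implications, so that $\am \in S$ applies. No gaps to report.
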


\begin{proof}
        We have to show that:
        \begin{enumerate}
            \item $\to_j$ defines an arrow structure.
            \item $S_j$ is upwards closed.
            \item $S_j$ is closed under modus ponens.
            \item $S_j$ contains the combinators.
        \end{enumerate}

        (1) and (2) follow from the monotonicity of $j$.

        In order to show (3), suppose $a\to_j b\in S_j$ and $a\in S_j$; that is, $j(a\to jb)\in S$ and $ja\in S$. Point (iii) of \reflemm{nuclproperties} implies $ja\to jjb\in S$ and the closure of $S$ under modus ponens implies $jjb\in S$. From (i) of the same lemma we get $jb\in S$, and thus $b\in S_j$.
        
        (4) To show that $S_j$ contains the combinators, note that $S \subseteq S_j$ and hence it suffices to prove that $S$ contains the combinators.

        First of all, consider $$\mathbf{k}_j:=\bigcurlywedge_{a,b}a\to j(b\to ja).$$ From $\mathbf{k}\in S$, the laws for a nucleus and ``intuitionistic reasoning'', it follows that $\mathbf{k}_j\in S$; hence $\mathbf{k}_j\in S_j$ holds as well.

        Next, consider \begin{align*}
            \mathbf{s}_j&:=\bigcurlywedge_{a,b,c}(a\to_j b\to_j c)\to_j (a\to_j b)\to_j a\to_j c.
        \end{align*}
     From the properties of a nucleus, we obtain $$(a\to j(b\to jc))\to (a\to jb\to jc)\in S.$$ Since also $\mathbf{s}\in S$, we obtain $\mathbf{s}_j\in S$ and hence $\mathbf{s}_j\in S_j$ as well.

    Finally, consider \begin{align*}
        \mathbf{a}_j:=&\bigcurlywedge_{x, I,(y_i)_{i \in I},(z_i)_{i \in I}}(\bigcurlywedge_{i\in I}x\to_j y_i\to_j z_i)\to_j x\to_j(\bigcurlywedge_{i\in I}y_i\to_j z_i)\\
        =&\bigcurlywedge_{x, I,(y_i)_{i \in I},(z_i)_{i \in I}}\big(\bigcurlywedge_{i\in I}x\to j( y_i\to j z_i)\big)\to j\big( x\to j(\bigcurlywedge_{i\in I}y_i\to j z_i)\big)
    \end{align*}
    We will again show that $\mathbf{a}_j \in S$, from which the desired $\mathbf{a}_j \in S_j$ follows.

    By repeatedly applying \reflemm{nuclproperties}, we see that $\mathbf{a}_j \in S$ follows from 
    \[ \bigcurlywedge_{x,I, (y_i)_{i \in I},(z_i)_{i \in I}}(\bigcurlywedge_{i\in I}x\to jy_i\to jz_i)\to x\to \bigcurlywedge_{i\in I} jy_i\to jz_i\in S. \]
    But because $\mathbf{a} \in S$ and \begin{align*}
        \mathbf{a}\preccurlyeq&\bigcurlywedge_{x, I, (y_i)_{i \in I},(z_i)_{i \in I}}(\bigcurlywedge_{i\in I}x\to jy_i\to jz_i)\to x\to \bigcurlywedge_{i\in I} jy_i\to jz_i\in S,
    \end{align*}
    the desired result follows.
\end{proof}

\begin{rema}{simplerdefinitionnotOK} We are now in a position to address the question why we are making not just the first modification (i), but also the second modification (ii) to the definition of an implicative algebra (see the discussion preceding \refdefi{arrowstr}). Indeed, let us make the following definition:
\begin{defi}{strongarrowalgebra} An arrow algebra $(A, \cleq, \to, S)$ will be called \emph{strong} if $$\mathbf{a}':=\bigcurlywedge_{a \in A, B \subseteq A}(\bigcurlywedge_{b \in B }a\to b)\to a\to\bigcurlywedge_{b\in B }b \in S. $$ 
\end{defi}
So the question is why we are not requiring every arrow algebra to be strong. 

The main reason is the following.
\begin{prop}{notstrongarrowalgebra}
    The arrow algebra $D(K_1)$ obtained from the pca $K_1$, as in the section on pcas, is not strong.
\end{prop}
\begin{proof}
    Suppose $t' \in {\bf a}'$ and let $x \in K_1$. By choosing $B = \emptyset$ and $a = \top$, we see that $t'x$ is always defined and a total function $\mathbb{N} \to \mathbb{N}$. On the other hand, suppose that $xy$ is defined and $xy = z$. Then by choosing $a = \{ y \}$ and $B = \{ \{ z \} \}$, we see that $t'xy$ is also defined and $t'xy = z$. In other words, $t'x$ extends $x$ to a total function. 
    
    This is not possible in $K_1$, due to the undecidability of the Halting Problem. Indeed, let $x$ be a code for the partial recursive function that on $n$ outputs the number of steps needed for $n \cdot n$ to terminate, and which will be undefined if $n \cdot n$ does not terminate. Then we use $t'xn$ to decide the Halting Problem, as follows: we run $n \cdot n$ up to $t'xn$ steps and say that $n \cdot n$ will never terminate if $n \cdot n$ has not yet terminated at that point. 
\end{proof}

The second reason is that in the proof of the preceding proposition, \refprop{subarrowalgebras}, we are unable to show that ${\bf a}'_j \in S_j$ if ${\bf a}' \in S$. For this, we do not have a counterexample; we are only saying that we have been unable to prove that it is true.
\end{rema}

\section{The interpretation of $\lambda$-terms in strong arrow algebras}

In his work Miquel shows that $\lambda$-terms can be interpreted in any implicative algebra (see \cite[Proposition 3.4]{miquel20}). In this section we will show that this is true more generally for \emph{strong} arrow algebras, using ideas very similar to Miquel's (indeed, note that every implicative algebra is a strong arrow algebra). In the process of showing this we will also prove that strong arrow algebras can be equipped with the structure of \emph{implicative ordered combinatory algebra} in the sense of \cite{ferrersantosetal17}: this will prove helpful in showing that arrow algebras give rise to triposes, as we will see in the next section.

\subsection{The main result} We remind the reader that $\lambda$-terms are defined as follows.

\begin{defi}{lambdaterms}
    The collection of \emph{$\lambda$-terms} is the smallest collection of formal expressions satisfying the following:
    \begin{enumerate}
        \item Each variable is a $\lambda$-term.
        \item (Application) If $M$ and $N$ are $\lambda$-terms, then so is $MN$.
        \item (Abstraction) If $M$ is a $\lambda$-term and $x$ is a variable, then $\lambda x.M$ is a $\lambda$-term.
    \end{enumerate}
\end{defi}

In order to interpret $\lambda$-terms, we need to interpret both application and abstraction. We start with the former. In what follows $A$ will be an arbitrary arrow structure.

\begin{defi}{applicationinarrowstr} (Application) For all $a,b\in A$ we define $$U_{a,b}:=\{x \in A\mid a\preccurlyeq b\rightarrow x \}$$ and the \emph{application} $$ab:=\bigcurlywedge U_{a,b}$$
\end{defi}

\begin{lemm}{AppProp}
    For any arrow structure, the application has the following properties:
    \begin{enumerate}
        \item If $a\preccurlyeq a'$ and $b\preccurlyeq b'$, then $ab\preccurlyeq a'b'$.
        \item $(a \to b)a \cleq b$.
    \end{enumerate}
\end{lemm}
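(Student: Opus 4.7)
The overall strategy is to unpack the definition $ab = \bigcurlywedge U_{a,b}$ and argue about which implications belong to the set $U_{a,b} = \{c \to d \mid a \preccurlyeq b \to c \to d\}$. Once membership in the right $U$-sets is established, all three inequalities follow immediately from basic properties of meets. The only ingredients I need from the arrow structure are: the anti-monotonicity of $\to$ in its first argument, its monotonicity in the second, and reflexivity/transitivity of $\preccurlyeq$.

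For (1), the plan is to show the reverse inclusion $U_{a',b'} \subseteq U_{a,b}$, from which $\bigcurlywedge U_{a,b} \preccurlyeq \bigcurlywedge U_{a',b'}$ follows. Given $c \to d \in U_{a',b'}$, i.e., $a' \preccurlyeq b' \to c \to d$, I use $b \preccurlyeq b'$ and anti-monotonicity of $\to$ in the first argument to get $b' \to c \to d \preccurlyeq b \to c \to d$. Combining with $a \preccurlyeq a' \preccurlyeq b' \to c \to d$, transitivity gives $a \preccurlyeq b \to c \to d$, so $c \to d \in U_{a,b}$ as required.

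For (2), I will verify directly that $b \to c$ lies in $U_{a \to b \to c, a}$ by reflexivity: $(a \to b \to c) \preccurlyeq (a \to b \to c)$, which is exactly the membership condition. Since $ab$ is the meet of $U_{a \to b\to c, a}$, it is bounded above by $b \to c$. For (3), I will show that each $b_i \to c_i$ belongs to $U_{a \to \bigcurlywedge_{i} b_i \to c_i,\, a}$, i.e., that $(a \to \bigcurlywedge_{i} b_i \to c_i) \preccurlyeq a \to b_i \to c_i$. This follows because $\bigcurlywedge_{i} b_i \to c_i \preccurlyeq b_i \to c_i$ and $\to$ is monotone in the second argument. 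Hence the application $(a \to \bigcurlywedge_i b_i \to c_i)\, a$, being a meet that is at most each $b_i \to c_i$, is at most $\bigcurlywedge_{i} b_i \to c_i$.

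None of these steps looks like a genuine obstacle; the whole lemma is a direct unfolding of \refdefi{applicationinarrowstr}. The only mildly subtle point worth stating carefully in the write-up is that in (3) we must use the family version: membership of each $b_i \to c_i$ in the defining set for the application, followed by taking the meet over $i \in I$. I would write out (1) in full as a warm-up and then present (2) and (3) tersely, since they are each a single line once the relevant membership in $U_{\cdot,\cdot}$ is observed.
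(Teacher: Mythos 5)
Your proposal is correct and follows essentially the same route as the paper: establish $U_{a',b'}\subseteq U_{a,b}$ via antitonicity for (1), and observe the relevant membership $b\to c\in U_{a\to b\to c,\,a}$ for (2). For (3) the paper chains parts (1) and (2) (using monotonicity of $\to$ in its second argument to get $a\to\bigcurlywedge_i b_i\to c_i\preccurlyeq a\to b_i\to c_i$) rather than verifying membership of each $b_i\to c_i$ directly, but this is the same computation presented in a trivially different order.
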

    \begin{proof}
        (1) Because the implication is antitone in its first operand we get $U_{a',b'}\subseteq U_{a,b}$ whenever $a\preccurlyeq a'$ and $b\preccurlyeq b'$.

        (2) Note $b \in U_{a\to b, a}$. Hence $(a \to b)a = \bigcurlywedge U_{a\to b,a}\preccurlyeq b$.
    \end{proof}

Next, we define abstraction for arrow structures.

\begin{defi}{abstrinarrowstr} (Abstraction)
    Let $A$ be an arrow structure and $f:A\to A$ a function. Then we define $$\lambda f:=\bigcurlywedge_{x\in A}x\to f(x)\in A.$$
\end{defi}

\begin{rema}{notation} At this point we will fix some important notational conventions. Point (1) of \reflemm{AppProp} implies that each arrow structure can be considered as a \emph{total} applicative poset (see \refdefi{pap}). For arrow structures we use the same notational conventions as for applicative posets; in particular, we will use the convention that application associates to the left, meaning that $abc$ stands for $(ab)c$.

To further reduce clutter we will say that in the following list the operations to the left bind stronger than the operation to the right:
\[ \cdot \mbox{ (application)} \quad \to  \quad \bigcurlywedge \] 
So, for instance, $a\to bc$ is shorthand for $a\to (bc)$.
\end{rema}

\begin{lemm}{abstraction}
    If $f,g:A\to A$ are functions, then we have the following properties:\begin{enumerate}
        \item If $f(a)\preccurlyeq g(a)$ for all $a\in A$, then $\lambda f\preccurlyeq \lambda g$.
        \item $(\lambda f)a\preccurlyeq f(a)$ for all $a\in A$.
    \end{enumerate}
\end{lemm}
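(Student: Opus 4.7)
The proof splits naturally into two short arguments, one for each claim, and both unfold directly from the definitions of $\partial$, $\lambda$, and the application operation $U_{-,-}$.

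For part (1), the plan is to reduce everything to the pointwise inequality $f(a) \cleq g(a)$ and push it through the construction. Since $\to$ is monotone in its second argument, the hypothesis gives $\partial f(a) = \top \to f(a) \cleq \top \to g(a) = \partial g(a)$ for every $a \in A$. Using that $\to$ is monotone in the second argument once more, we obtain $x \to \partial f(x) \cleq x \to \partial g(x)$ for every $x \in A$. Taking the meet on both sides over all $x$ yields $\lambda f \cleq \lambda g$, as desired.

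For part (2), the key observation is that $\partial f(a) = \top \to f(a)$ is itself an implication, hence a candidate element of $U_{\lambda f, a}$. By the very definition of $\lambda f$ as a meet over all $x \in A$, specialising to $x = a$ gives
\[ \lambda f = \bigcurlywedge_{x \in A} x \to \partial f(x) \;\cleq\; a \to \partial f(a). \]
Since $\partial f(a)$ has the form $c \to d$ (with $c = \top$, $d = f(a)$), this inequality is exactly the condition for $\partial f(a)$ to lie in the set $U_{\lambda f, a}$ from \refdefi{applicationinarrowstr}. Therefore $\partial f(a)$ is one of the elements whose meet defines the application $(\lambda f) a$, so
\[ (\lambda f) a \;=\; \bigcurlywedge U_{\lambda f, a} \;\cleq\; \partial f(a). \]

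Neither step presents any real obstacle: both are essentially bookkeeping with the defining formulas. If anything, the only thing one might momentarily miss is the trick in part (2) of recognising $\partial f(a)$ as an implication and plugging it into the definition of $U$; once that is noticed, both parts are immediate from monotonicity of $\to$ and the universal property of the meet.
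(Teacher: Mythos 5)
Your proof is correct and follows essentially the same route as the paper: part (1) is the same monotonicity argument, and part (2) unfolds the same computation, except that you verify $\partial f(a)\in U_{\lambda f,a}$ directly instead of citing \reflemm{AppProp} (whose proof is exactly that membership check combined with monotonicity of application). No gaps.
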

\begin{proof}
    (1) By monotonicity in the second component of the implication we see $$\lambda f=\bigcurlywedge_{a}a\to f(a)\preccurlyeq\bigcurlywedge_{a}a\to g(a)=\lambda g.$$
    (2) $(\lambda f)a = (\bigcurlywedge_x x\to f(x))a\preccurlyeq (a\to f(a))a\preccurlyeq f(a)$ by \reflemm{AppProp}(2).
\end{proof}

Now that we have defined an interpretation for application and abstraction, we can recursively define an interpretation in an arrow structure for each $\lambda$-term.

\begin{defi}{lambdainterpretation}
    Let $A$ be an arrow structure and $M$ be a $\lambda$-term with free variables among $x_1,\dots,x_n$. The \emph{interpretation} $M^A: A^n\to A$ of the $\lambda$-term $M$ in $A$ will recursively be defined as\begin{enumerate}
        \item If $M=x_i$, then $M^A$ is the projection on the $i$th coordinate.
        \item For products: $(MN)^A:A^n\to A:\overline{a}\mapsto M^A(\overline{a})N^A(\overline{a})$.
        \item If the free variables of $M$ are among $x_1,\dots,x_n,x$, then $$(\lambda x.M)^A:\overline{a}\mapsto \lambda(a\mapsto M(\overline{a},a))$$
    \end{enumerate}
\end{defi}

\begin{prop}{monotonicityoflambdaterms}
    If $M$ is a $\lambda$-term with free variables among $x_1,\dots, x_n$ and $a_1\preccurlyeq b_1,\dots, a_n\preccurlyeq b_n$ in an arrow structure $A$, then $$M^A(\overline{a})\preccurlyeq M^A(\overline{b})$$
\end{prop}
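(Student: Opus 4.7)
The plan is to prove this by structural induction on the $\lambda$-term $M$, with the three cases of \refdefi{lambdainterpretation} as the three induction cases. All the work has essentially been done already in \reflemm{AppProp}(1) and \reflemm{abstraction}(1), which give us monotonicity of the application and abstraction operations respectively; the proposition is just the observation that these combine compositionally.

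For the base case $M = x_i$, the interpretation $M^A$ is the $i$th projection, so $M^A(\overline{a}) = a_i \preccurlyeq b_i = M^A(\overline{b})$ by assumption. For the application case $M = PQ$, the induction hypothesis applied to $P$ and $Q$ yields $P^A(\overline{a}) \preccurlyeq P^A(\overline{b})$ and $Q^A(\overline{a}) \preccurlyeq Q^A(\overline{b})$, and then \reflemm{AppProp}(1) gives
\[ M^A(\overline{a}) = P^A(\overline{a}) Q^A(\overline{a}) \preccurlyeq P^A(\overline{b}) Q^A(\overline{b}) = M^A(\overline{b}). \]

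For the abstraction case $M = \lambda x.N$, where $N$ has free variables among $x_1,\dots,x_n,x$, the induction hypothesis applied to $N$ (with the sequence $a_1\preccurlyeq b_1,\dots,a_n\preccurlyeq b_n, a \preccurlyeq a$) yields $N^A(\overline{a}, a) \preccurlyeq N^A(\overline{b}, a)$ for every $a \in A$. Setting $f(a) = N^A(\overline{a},a)$ and $g(a) = N^A(\overline{b},a)$, this is precisely the hypothesis of \reflemm{abstraction}(1), which then gives
\[ M^A(\overline{a}) = \lambda f \preccurlyeq \lambda g = M^A(\overline{b}). \]
There are no real obstacles here; the proposition is almost a formal consequence of the two preceding monotonicity lemmas, and writing out the induction is the whole argument.
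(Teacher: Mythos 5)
Your proof is correct and is exactly the argument the paper intends: the paper's own proof is the one-line remark that the statement follows by structural induction on $M$ using \reflemm{AppProp} and \reflemm{abstraction}, and you have simply written out that induction in full, including the correct use of reflexivity in the extra coordinate for the abstraction case. No differences to report.
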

\begin{proof}
    By induction on the structure of $M$ using \reflemm{AppProp}(1) and \reflemm{abstraction}(1).
\end{proof}

We can now state this section's main result.

\begin{theo}{lambda}
    Let $A$ be any strong arrow algebra with separator $S$. If $M$ is a $\lambda$-term with free variables among $x_1,\dots,x_n$ and $a_1,\dots,a_n\in S$, then $$M^A(a_1,\dots, a_n)\in S.$$ In particular, $M^A \in S$ if $M$ is closed.
\end{theo}

\subsection{The proof} To prove the main result, we are going to need some auxiliary lemmas. The idea is that for each closed $\lambda$-term $M$, we give an associated `combinatory term' $M_*$ with $M_*^A\preccurlyeq M^A$ and $M_*^A \in S$. So the combinatory term gives enough `evidence' for the original term to be an element of the separator.

We will start with proving some properties of the separator of a strong arrow algebra.
\begin{lemm}{SepApp}
    Separators in strong arrow algebras are closed under application.
\end{lemm}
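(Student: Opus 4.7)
The plan is to use the combinator $\mathbf{a}$ on the family indexing $U_{a,b}$. Suppose $a, b \in S$; we want to show $ab = \bigcurlywedge U_{a,b} \in S$. The key observation is that $U_{a,b}$ consists, by definition, exclusively of implications $c \to d$, so the family is of the precise shape required by the combinator $\mathbf{a}$.

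First, I would instantiate $\mathbf{a}$ with $x := b$ and with the indexing set $I := U_{a,b}$, where to each $i = (c \to d) \in U_{a,b}$ we associate $y_i := c$ and $z_i := d$. From the definition of $\mathbf{a}$ we then get
\[
\mathbf{a} \cleq \Bigl(\bigcurlywedge_{(c \to d) \in U_{a,b}} b \to c \to d\Bigr) \to b \to \bigcurlywedge_{(c \to d) \in U_{a,b}} (c \to d),
\]
and the right-hand meet is exactly $ab$. Since $\mathbf{a} \in S$ and separators are upwards closed, the entire displayed implication lies in $S$.

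Next, I would observe that for every $(c \to d) \in U_{a,b}$ we have $a \cleq b \to c \to d$ by definition of $U_{a,b}$, so $a \cleq \bigcurlywedge_{(c \to d) \in U_{a,b}} b \to c \to d$; since $a \in S$ and $S$ is upwards closed, this meet is in $S$. Two applications of modus ponens (first using the above meet, then using $b \in S$) yield $ab \in S$, as required.

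The only subtlety worth flagging is structural rather than computational: it is essential that $U_{a,b}$ is indexed by a family of elements of the form $c \to d$, because the combinator $\mathbf{a}$ only supplies evidence for meets ranging over implications (this is precisely modification (ii) of the implicative axiom discussed in \refrema{simplerdefinitionnotOK}). Once this is noted, the argument is a direct application of \reflemm{sepclosedunderlimitedMP} (or, equivalently, of $\mathbf{a}$ and two instances of modus ponens), so no further machinery is needed.
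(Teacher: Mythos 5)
Your proof is correct and is essentially identical to the paper's own argument: both instantiate $\mathbf{a}$ at $x = b$ with the family of implications $U_{a,b}$, use upward closure to get $\bigcurlywedge_{c\to d\in U_{a,b}} b\to c\to d \in S$ from $a \in S$, and finish with two applications of modus ponens. No further comment is needed.
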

    \begin{proof}
        Let $S$ be a separator and suppose $a,b\in S$. Because 
        $$a\preccurlyeq\bigcurlywedge_{x \in U_{a,b}}b\to x$$ 
        we have $\bigcurlywedge_{x \in U_{a,b}}b\to x \in S$. Now $\mathbf{a}' \in S$ implies
        \begin{align*} \big( \, \bigcurlywedge_{ x \in U_{a,b}}b\to x \, \big) \to b \to ab  & = \\
            \big( \, \bigcurlywedge_{x \in U_{a,b}}b\to x \, \big) \to b \to \bigcurlywedge_{x\in U_{a,b}} x & \in S. \end{align*} Using that separators are closed under modus ponens (twice), we obtain that $ab \in  S$.
    \end{proof}

\begin{lemm}{etainsep}
    If $S$ is a separator in a strong arrow algebra, then $$\Em:=\bigcurlywedge_{a \in A} a\to\bigcurlywedge_{b \in A} b\to ab\in S.$$.
\end{lemm}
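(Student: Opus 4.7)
The statement $\eta = \bigcurlywedge_{a} a \to \bigcurlywedge_{b} b \to ab \in S$ will be established by two invocations of the $\mathbf{a}$-combinator, assembled together via \reflemm{sepclosedunderlimitedMP}. The crucial observation is that, by \refdefi{applicationinarrowstr}, $ab = \bigcurlywedge U_{a,b}$ is by construction a meet of implications, which is precisely the configuration that $\mathbf{a}$ is designed to handle.

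\emph{First step:} show $\mathbf{a} \cleq a \to b \to ab$ for every $a, b \in A$. I would instantiate the defining inequality of $\mathbf{a}$ at $x = b$ with the family indexed by $U_{a,b}$ (each of whose elements $e$ is an implication $c \to d$), obtaining
$$\mathbf{a} \cleq \Big(\bigcurlywedge_{e \in U_{a,b}} b \to e\Big) \to b \to \bigcurlywedge_{e \in U_{a,b}} e = \Big(\bigcurlywedge_{e \in U_{a,b}} b \to e\Big) \to b \to ab.$$
Since by the very definition of $U_{a,b}$ we have $a \cleq b \to e$ for each $e \in U_{a,b}$, and hence $a \cleq \bigcurlywedge_{e \in U_{a,b}} b \to e$, antimonotonicity of $\to$ in its first argument upgrades the above to $\mathbf{a} \cleq a \to b \to ab$. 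Taking the meet over all $a, b$ then yields $\bigcurlywedge_{a,b}(a \to b \to ab) \cgeq \mathbf{a}$, so this element lies in $S$ by upwards closure.

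\emph{Second step:} apply $\mathbf{a}$ again, this time to pull the quantifier over $b$ outside the implication on $a$. Instantiating $\mathbf{a}$ with $x = a$, index set $A$ (ranging over $b$), and $y_b = b$, $z_b = ab$ gives, for each $a$,
$$\mathbf{a} \cleq \Big(\bigcurlywedge_{b} a \to b \to ab\Big) \to a \to \bigcurlywedge_{b} b \to ab,$$
and taking the meet over $a$ produces an element that is again $\cgeq \mathbf{a}$, hence in $S$.

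\emph{Final step:} invoke \reflemm{sepclosedunderlimitedMP} with index set $A$ (ranging over $a$) and $x_a := \bigcurlywedge_{b} a \to b \to ab$, $y_a := a$, $z_a := \bigcurlywedge_{b} b \to ab$. The first hypothesis $\bigcurlywedge_a x_a \to y_a \to z_a \in S$ is the conclusion of the second step, and the second hypothesis $\bigcurlywedge_a x_a = \bigcurlywedge_{a,b}(a \to b \to ab) \in S$ is the conclusion of the first step. The lemma then delivers $\bigcurlywedge_a y_a \to z_a = \eta \in S$, as required. I expect the main obstacle to be the first step: one must recognize that the definition of $U_{a,b}$ already encodes the premise $a \cleq \bigcurlywedge_{e \in U_{a,b}} b \to e$ needed to feed into $\mathbf{a}$, and then exploit antimonotonicity of $\to$ to weaken the antecedent from $\bigcurlywedge_{e} b \to e$ down to $a$.
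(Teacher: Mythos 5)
Your proof is correct, and while it has the same overall skeleton as the paper's (first establish $\bigcurlywedge_{a,b}\, a\to b\to ab\in S$, then commute the meet over $b$ past $a\to{}$ using $\mathbf{a}$ together with \reflemm{sepclosedunderlimitedMP} -- your final step is identical to the paper's), your derivation of the intermediate claim is genuinely more economical. The paper obtains $\bigcurlywedge_{a,b}\, a\to b\to ab\in S$ by combining two separate membership facts: $\bigcurlywedge_{a,b}\, a\to\bigcurlywedge_{c\to d\in U_{a,b}}(b\to c\to d)\in S$ (from $\mathbf{i}\in S$ and monotonicity) and $\bigcurlywedge_{a,b}\,\big(\bigcurlywedge_{c\to d\in U_{a,b}}b\to c\to d\big)\to b\to ab\in S$ (from $\mathbf{a}$, as in \reflemm{SepApp}), which must then be chained together inside $S$ via \reflemm{sepclosedunderlimitedMP}. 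You instead observe that the hypothesis $a\cleq\bigcurlywedge_{e\in U_{a,b}}(b\to e)$ built into the definition of $U_{a,b}$ can be absorbed by antitonicity of $\to$ in its first argument, yielding the single evidential inequality $\mathbf{a}\cleq\bigcurlywedge_{a,b}\, a\to b\to ab$, so that upward closure of $S$ alone suffices -- no appeal to $\mathbf{i}$ and one fewer use of the limited modus ponens lemma. Both instantiations of $\mathbf{a}$ are legitimate since $U_{a,b}\subseteq\mathrm{Im}(\to)$ and each $b\to ab$ is an implication, so the argument goes through as written.
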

    \begin{proof} By the definition of $U_{a,b}$ we have
        \begin{align*}
            \mathbf{i}=\bigcurlywedge_{a \in A} a\to a&\preccurlyeq \bigcurlywedge_{a \in A}  a\to \bigcurlywedge_{b \in A} \bigcurlywedge_{x \in U_{a,b}}b\to x\\
            &\preccurlyeq\bigcurlywedge_{a,b \in A}a\to\bigcurlywedge_{x \in U_{a,b}}b\to  x \in S.
        \end{align*} Here the first inequality follows from the definition of $U_{a,b}$, while the second holds because $\to$ is monotone in the second argument (as in equation (\ref{theemptyone})).
        
        Similarly, as in the previous lemma, we have that \[  \bigcurlywedge_{a,b \in A} \, \big( \, \bigcurlywedge_{x \in U_{a,b}}b\to x \, \big) \to b \to ab \in S. \]
        Therefore it follows from \reflemm{sepclosedunderlimitedMP} that $$\bigcurlywedge_{a,b \in A}a\to b\to ab\in S.$$
        Using $\mathbf{a}\in S$, again in combination with \reflemm{sepclosedunderlimitedMP}, we will get $\Em \in S$.
    \end{proof}

\begin{prop}{arrowalgebratca} If $A$ is a strong arrow algebra with $\cdot$ the application from \refdefi{applicationinarrowstr}, then $(A, \cdot, \cleq, S)$ is a total combinatory algebra. In particular, for the following elements 
    \begin{eqnarray*}
        \Km & := & \bigcurlywedge_{a\in A} a\to\bigcurlywedge_{b\in A} b\to a\\
        \Sm & := & \bigcurlywedge_{a\in A} a\to \bigcurlywedge_{b\in A} b\to \bigcurlywedge_{c \in A} c \to ac(bc)
    \end{eqnarray*}
    we have $\Km, \Sm \in S$, $\Km ab \cleq a$ and $\Sm abc \cleq ac(bc)$ for all $a, b, c \in A$.
\end{prop}
    \begin{proof}
    It suffices to prove that $\Km, \Sm \in S$. That $\Km \in S$ follows from \reflemm{sepclosedunderlimitedMP} in combination with $\km, \am \in S$.
    
    From $\sm \in S$ it follows that 
    \[ \bigcurlywedge_{a,b,c \in A} (c \to bc \to ac(bc)) \to (c \to bc) \to c \to ac(bc) \in S. \]
    In addition, the previous lemma implies that
    \[ \bigcurlywedge_{b,c \in A} b \to c \to bc, \bigcurlywedge_{a,b,c \in A} ac \to bc \to ac(bc), \bigcurlywedge_{a,c \in A} a \to c \to ac \in S. \]
    Therefore we obtain by intuitionistic reasoning that
    \[ \bigcurlywedge_{a,b,c \in A} a \to b \to c \to ac(bc) \in S. \] 
    Now $\Sm \in S$ follows from this in combination with $\am \in S$.
\end{proof}

\begin{defi}{implicativeoca} 
    \cite[Definition 3.7]{ferrersantosetal17} A total combinatory algebra $(P, \cdot, \leq, P^\#)$ is an \emph{implicative ordered combinatory algebra (implicative oca)} if:
    \begin{enumerate}
        \item[(i)] $(P, \leq)$ is a complete poset.
        \item[(ii)] there is an operation $\to: P^{\rm op} \times P \to P$ which is antitone in the first and monotone in the second component, such that
        \[ a \leq b \to c \Longrightarrow ab \leq c. \] 
        \item[(iii)] there is an element ${\bf E} \in P^\#$ such that
        \[ ab \leq c \Longrightarrow {\bf E}a \leq b \to c. \]
    \end{enumerate}
\end{defi}

\begin{coro}{strongalgebrasareimplicativeocas}
    Every strong arrow algebra can be equipped with the structure of an implicative ordered combinatory algebra.
\end{coro}
\begin{proof}
    This follows from the previous two results.
\end{proof}    

\begin{defi}{combinatoryterm} The collection of \emph{combinatory terms} is the smallest collection of formal expressions satisfying the following rules: 
\begin{enumerate}
    \item Every variable is a combinatory term.
    \item The combinators $\Idm, \Km, \Sm$ and $\Em$ are combinatory terms.
    \item If $M$ and $N$ are combinatory terms, then so is $MN$.
\end{enumerate}
\end{defi}

Like $\lambda$-terms, combinatory terms can be interpreted in any arrow structure.

\begin{defi}{intofcombterms}
    Let $A$ be an arrow structure and $M$ be a combinatory term with free variables among $\overline{x}=x_1,\dots,x_n$. For $\overline{a}=a_1,\dots,a_n\in A^n$, we define the interpretation $M^A[\overline{a}/\overline{x}]$ inductively as follows:\begin{enumerate}
        \item $x^A_i[\overline{a}/\overline{x}]:=a_i$
        \item $\Idm^A[\overline{a}/\overline{x}]:=\idm$
        \item $\Km^A[\overline{a}/\overline{x}]:= \Km$
        \item $\Sm^A[\overline{a}/\overline{x}]:=\Sm$
        \item $\Em^A[\overline{a}/\overline{x}]:=\Em$
        \item $(MN)^A[\overline{a}/\overline{x}]=M^A[\overline{a}/\overline{x}]N^A[\overline{a}/\overline{x}]$
    \end{enumerate} 
\end{defi}

\begin{lemm}{combsep} Suppose $A$ is a strong arrow algebra with separator $S$. If $M$ a combinatory term with variables among $\overline{x}$ and $\overline{a}\in S^n$, then $M^A[\overline{a}/\overline{x}]\in S$.
\end{lemm}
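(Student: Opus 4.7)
The proof is by straightforward induction on the structure of the combinatory term $M$, using the auxiliary lemmas already established in this section. The point is that all the ingredients are now in place: the five ``building-block'' combinators are known to lie in any separator, and separators are closed under the application operation on $A$.

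For the base cases, I would distinguish two sub-cases. If $M$ is a variable $x_i$, then by \refdefi{intofcombterms} we have $M^A[\overline{a}/\overline{x}] = a_i$, which lies in $S$ by hypothesis since $\overline{a} \in S^n$. If $M$ is one of the constants $\mathbf{i'}, \mathbf{k'}, \mathbf{s'}$, or $\eta$, then $M^A[\overline{a}/\overline{x}]$ equals the corresponding element of $A$, which lies in $S$ by \reflemm{uglycombinsep} (for the first three) or \reflemm{etainsep} (for $\eta$).

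For the inductive step, suppose $M = NP$ where $N$ and $P$ are combinatory terms for which the claim already holds. Then
\[ M^A[\overline{a}/\overline{x}] = N^A[\overline{a}/\overline{x}] \cdot P^A[\overline{a}/\overline{x}], \]
and by the induction hypothesis both factors belong to $S$. Since separators are closed under application by \reflemm{SepApp}, the product lies in $S$ as well, which completes the induction.

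There is no real obstacle here; the entire difficulty of this section has already been absorbed into the preparatory lemmas \ref{SepApp}, \ref{etainsep}, and \ref{uglycombinsep}, which is precisely why those lemmas were isolated. The role of the present lemma is just to package everything into a clean induction principle that can be applied to interpret $\lambda$-terms in the next step.
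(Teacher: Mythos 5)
Your proof is correct and is exactly the argument the paper intends: a structural induction on $M$ whose base cases are handled by the hypothesis $\overline{a}\in S^n$ together with \reflemm{uglycombinsep} and \reflemm{etainsep}, and whose application case is handled by \reflemm{SepApp}. The paper states this proof in one line citing the same three lemmas, so your write-up is just a more explicit version of the same argument.
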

\begin{proof}
        Proof by induction on the structure of $M$, using \reflemm{SepApp}, \reflemm{etainsep} and \refprop{arrowalgebratca}.
    \end{proof}

    Next, we introduce a form of abstraction for the combinatory terms as well. 

\begin{defi}{lambdaforcombterms}
    For a combinatory term  $M$ and a variable $x$ we define the combinatory term $\langle x\rangle M$ inductively as follows:
    \begin{enumerate}
        \item $\langle x\rangle x:= \Idm$.
        \item $\langle x\rangle M:=\Km M$ if $M$ is $\Idm$, $\Km$, $\Sm$, $\Em$ or a variable different from $x$.
        \item $\langle x\rangle (MN):=\Sm(\langle x\rangle M)(\langle x\rangle N)$.
    \end{enumerate}
\end{defi}

With this definition we now proceed to prove that $\langle x\rangle M$ indeed behaves like an abstraction. 

\begin{lemm}{combabs}
    If $M$ is a combinatory term, $x$ a variable and the free variables of $\langle x\rangle M$ are contained in $\overline{y}$, then:
    \begin{enumerate}
        \item The free variables of $\langle x\rangle M$ are those of $M$ minus $x$.
        \item For any arrow structure $A$ and all $a\in A$, $\overline{a}\in A^n$, we have $$(\langle x\rangle M)^A[\overline{a}/\overline{y}]a\preccurlyeq M^A[\overline{a}/\overline{y},a/x]$$
    \end{enumerate}
\end{lemm}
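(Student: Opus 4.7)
The plan is to prove both parts by structural induction on the combinatory term $M$, following the clauses of \refdefi{lambdaforcombterms}. Part (1) is a bookkeeping check: when $M = x$, we have $\langle x\rangle M = \mathbf{i}'$ with no free variables, matching $\mathrm{fv}(M)\setminus\{x\} = \emptyset$; when $M$ is one of the constants $\mathbf{i}'$, $\mathbf{k}'$, $\mathbf{s}'$, $\eta$ or a variable $y\neq x$, the expression $\mathbf{k}' M$ has exactly the free variables of $M$, which already equals $\mathrm{fv}(M)\setminus\{x\}$ since $x$ does not occur in $M$ in these cases; and for $\langle x\rangle(NP) = \mathbf{s}'(\langle x\rangle N)(\langle x\rangle P)$ the claim follows immediately from the inductive hypothesis.

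The substance is in part (2). The recurring tool is \reflemm{AppProp}: from $u \cleq a \to b \to c$ one obtains $u \cdot a \cleq b \to c$ by combining monotonicity of application with the reduction $(a \to b \to c)\cdot a \cleq b \to c$; this lets us ``strip'' arguments one at a time. In the base case $M = x$, the inequality $\mathbf{i}' \cleq a \to \partial a$ yields $\mathbf{i}' \cdot a \cleq \partial a = \partial M^A[\overline{a}/\overline{y}, a/x]$. When $M$ is a constant or a variable other than $x$, write $b$ for the value $M^A[\overline{a}/\overline{y}] = M^A[\overline{a}/\overline{y}, a/x]$, which does not depend on $a$. The definition of $\mathbf{k}'$ gives $\mathbf{k}' \cleq b \to a \to \partial b$, so two stripping steps deliver $(\mathbf{k}'\cdot b)\cdot a \cleq \partial b$ as required.

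The crucial case is $M = NP$. Let $N' = (\langle x\rangle N)^A[\overline{a}/\overline{y}]$, $P' = (\langle x\rangle P)^A[\overline{a}/\overline{y}]$, $e = N^A[\overline{a}/\overline{y}, a/x]$, $f = P^A[\overline{a}/\overline{y}, a/x]$, and $d = ef$, which is by definition $(NP)^A[\overline{a}/\overline{y}, a/x]$. The inductive hypothesis supplies $N'\cdot a \cleq \partial e$ and $P'\cdot a \cleq \partial f$, while $ef \cleq d$ holds trivially. Matching these conditions against the indexing set in the definition of $\mathbf{s}'$ from \reflemm{uglycombinsep} (with outer bound variables instantiated to $N'$ and $P'$, inner bound $c$ to the given element $a$, inner bound $d$ to $ef$, and witnesses $e, f$ as chosen above) yields
\[ \mathbf{s}' \cleq N' \to P' \to a \to \partial d. \]
Three applications of the stripping principle then give $\mathbf{s}' \cdot N' \cdot P' \cdot a \cleq \partial d = \partial(NP)^A[\overline{a}/\overline{y}, a/x]$, which is exactly the desired inequality.

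The only conceptually nontrivial step is spotting, in the application case, that the triple $(e, f, d)$ chosen above really does witness membership in the inner meet defining $\mathbf{s}'$; once this pattern match is in place, the rest is mechanical application of \reflemm{AppProp} and the definitions. This is precisely why $\mathbf{s}'$ was designed in \reflemm{uglycombinsep} with the exotic indexing set appearing there.
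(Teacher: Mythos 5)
Your proof is correct and follows essentially the same route as the paper: induction on the structure of $M$, with the application case resolved by exhibiting $e = N^A[\overline{a}/\overline{y},a/x]$ and $f = P^A[\overline{a}/\overline{y},a/x]$ as witnesses for membership of $a \to \partial\big((NP)^A[\overline{a}/\overline{y},a/x]\big)$ in the index set defining $\mathbf{s}'$, followed by repeated stripping via \reflemm{AppProp}. You also spell out the base and constant cases, which the paper leaves implicit.
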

    \begin{proof}
        This is a simple proof by induction on the structure of $M$ using the properties of $\Km$ and $\Sm$ as stated in \refprop{arrowalgebratca}.
    \end{proof}

\begin{defi}{translatinglambdatocomb}
    For each $\lambda$-term $M$ we define an associated combinatory term $M_*$ as follows:
    \begin{enumerate}
        \item For a variable $x$, we put $x_*:= x$.
        \item $(MN)_*:= M_* N_*$.
        \item $(\lambda x.M)_*:=\Em(\langle x\rangle M_*)$.
    \end{enumerate}
\end{defi}

\begin{lemm}{lambdacomb}
    Suppose $M$ is a $\lambda$-term with free variables among $\overline{y}=y_1,\dots, y_n$. For any arrow structure $A$ and $\overline{a}\in A^n$ we have $$M^A_*[\overline{a}/\overline{y}]\preccurlyeq M^A(\overline{a}).$$
\end{lemm}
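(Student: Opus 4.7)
The plan is to induct on the structure of the $\lambda$-term $M$.

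The variable case is immediate from the definitions: if $M = y_i$, then $M_* = y_i$ and both sides equal $a_i$. For application $M = M_1 M_2$, we have $M_* = M_{1*} M_{2*}$, and by the induction hypothesis together with monotonicity of the application operation (\reflemm{AppProp}(1)), we get
\[ M_*^A[\overline{a}/\overline{y}] = M_{1*}^A[\overline{a}/\overline{y}] \cdot M_{2*}^A[\overline{a}/\overline{y}] \preccurlyeq M_1^A(\overline{a}) \cdot M_2^A(\overline{a}) = M^A(\overline{a}). \]

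The essential case is abstraction $M = \lambda x.N$, where $M_* = \eta(\langle x\rangle N_*)$ and $M^A(\overline{a}) = \bigcurlywedge_{c\in A} c \to \partial N^A(\overline{a}, c)$. Write $L := (\langle x\rangle N_*)^A[\overline{a}/\overline{y}]$. First I would establish the auxiliary fact that for any $b \in A$, $\eta b \preccurlyeq \bigcurlywedge_{c\in A} c \to bc$: this follows directly from the definition of $\eta$ and \refdefi{applicationinarrowstr}, since $\eta \preccurlyeq b \to c \to bc$ for every $c$, so each $c \to bc$ lies in $U_{\eta, b}$. Applying this with $b = L$ yields $\eta L \preccurlyeq \bigcurlywedge_{c} c \to Lc$.

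Next, \reflemm{combabs}(2) gives $Lc \preccurlyeq \partial N_*^A[\overline{a}/\overline{y}, c/x]$ for every $c \in A$. The induction hypothesis applied to $N$ (with the extended substitution) gives $N_*^A[\overline{a}/\overline{y}, c/x] \preccurlyeq N^A(\overline{a}, c)$, and since $\partial = \top \to (-)$ is monotone in its argument (monotonicity of $\to$ in the second slot), we obtain $Lc \preccurlyeq \partial N^A(\overline{a}, c)$. Using antitonicity/monotonicity of $\to$ in the appropriate slots gives $c \to Lc \preccurlyeq c \to \partial N^A(\overline{a}, c)$. Taking the meet over $c \in A$ and chaining the inequalities yields
\[ M_*^A[\overline{a}/\overline{y}] = \eta L \preccurlyeq \bigcurlywedge_c c \to Lc \preccurlyeq \bigcurlywedge_c c \to \partial N^A(\overline{a}, c) = M^A(\overline{a}), \]
which completes the induction.

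The only subtlety I anticipate is keeping the bookkeeping of the substitutions straight when invoking \reflemm{combabs}(2), since $\langle x\rangle N_*$ may have free variables among $\overline{y}$ and we need to make sure the lemma is applied with exactly the right tuple; but this is a matter of careful notation rather than a conceptual obstacle.
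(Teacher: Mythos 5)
Your proof is correct and takes essentially the same approach as the paper: the identical chain of inequalities in the abstraction case, namely $\eta L \preccurlyeq \bigcurlywedge_{c} c \to Lc \preccurlyeq \bigcurlywedge_{c} c \to \partial N_*^A[\overline{a}/\overline{y},c/x] \preccurlyeq \bigcurlywedge_{c} c \to \partial N^A(\overline{a},c)$, justified by the definition of application, \reflemm{combabs} and the induction hypothesis respectively. You merely spell out the variable and application cases (which the paper dismisses as trivial) and give a slightly more direct justification of the first inequality via $U_{\eta,L}$ rather than via \reflemm{AppProp}.
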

    \begin{proof}
    Proof by induction on the structure of $M$. We will only show the abstraction case $M=\lambda x.N$ as the other two cases are trivial. \begin{eqnarray*}
    M_*^A[\overline{a}/\overline{y}] & = &(\Em(\langle x\rangle N_*))^A[\overline{a}/\overline{y}]\\
    &\preccurlyeq & \bigcurlywedge_{a\in A}a\to (\langle x\rangle N_*)^A[\overline{a}/\overline{y}]a\\
    &\preccurlyeq & \bigcurlywedge_{a\in A}a\to  N_*^A[\overline{a}/\overline{y},a/x]\\
    &\preccurlyeq & \bigcurlywedge_{a\in A} a\to N^A(\overline{a},a)\\
    &= & (\lambda x.N)^A(\overline{a})\\
    &= & M^A(\overline{a})
    \end{eqnarray*}
    where the second and third inequality follow from \reflemm{combabs} and the induction hypothesis respectively.
    \end{proof}

This finishes the proof of \reftheo{lambda}, as it now follows from \reflemm{lambdacomb} and \reflemm{combsep}. 

\begin{rema}{comparisonwithothers} The proof we have just given is heavily inspired by Miquel's work on implicative algebras. However, for implicative algebras we have the ``adjunction property''
\begin{equation} \label{adjunction} ab \cleq c \Leftrightarrow a \cleq b \to c, \end{equation}
which simplifies the proof of \reftheo{lambda}. Indeed, for strong arrow algebras it seems we can only prove the direction for left to right in (\ref{adjunction}) with an additional ${\bf E} \in S$. 

In an earlier version of this paper we proved a result like \reftheo{lambda} for general arrow algebras, but with a different interpretation of the application $ab$ and the abstraction $\lambda f$. The proof of this was quite ugly and it turned out that it could be avoided, so it is no longer included in the present version of the paper. (It can still be found in the MSc thesis of the second author \cite{briet23}.)
\end{rema}

\section{Arrow triposes}

We now turn to proving the main result about arrow algebras: that they give rise to triposes. That is, we will generalise \refprop{preHeytingalgfromimpalg} and \reftheo{imptripos} to arrow algebras. We will start by doing this for strong arrow algebras and then explain how to generalise this result to all arrow algebras.

\subsection{PreHeyting algebras from strong arrow algebras} Let us first explain how to construct a preHeyting algebra from a strong arrow algebra.

\begin{prop}{strongHeytconstr} 
    Let $A=(A,\preccurlyeq,\to,S)$ be a strong arrow algebra. In that case we can define a preorder $\vdash$ on $A$ as follows: 
    $$a\vdash b\text{ if }a\to b\in S.$$
    We refer to $\vdash$ as the \emph{logical ordering} and with this preorder, $(A,\vdash)$ is a preHeyting algebra with $\to$ as Heyting implication.
\end{prop}
\begin{proof} This follows from \cite[Theorem 4.15]{ferrersantosetal17} and \refcoro{strongalgebrasareimplicativeocas}. For the convenience of the reader and for future reference, we outline a proof here.
    
Note that $a \vdash b$ is equivalent to the existence of an $r \in S$ such that $ra \cleq b$. Indeed, if $a \to b \in S$ then $ra \cleq b$ for $r = a \to b$; conversely, if $ra \cleq b$ for some $r \in S$, then ${\bf E}r \cleq a \to b$ and hence $a \to b \in S$.

\emph{Preorder:} Reflexivity and transitivity follow from $\mathbf{i}\in S$ and $\mathbf{b}\in S$ respectively (see \refcoro{othercombinators}).

\emph{Bounded:} This follows from the general fact that $\vdash$ contains $\preccurlyeq$, which is true because $\mathbf{i}\in S$ and implication is monotone. In particular, we have $a\vdash\top$ and $\bot\vdash a$, because $a \cleq \top$ and $\bot \cleq a$.

\emph{Binary meets:} Let us define
\begin{eqnarray*}
    {\bf p} & := & (\lambda xyz.zxy)^A \\
    {\bf p}_0 & := & (\lambda x.x(\lambda xy.x))^A \\
    {\bf p}_1 & := & (\lambda x.x(\lambda xy.y))^A
\end{eqnarray*}
Then ${\bf p}, {\bf p}_0, {\bf p}_1 \in S$ by \reftheo{lambda}. For all $a, b \in A$ we have ${\bf p}ab \vdash a$ and ${\bf p}ab \vdash b$, because ${\bf p}_0({\bf p}ab) \cleq a$ and ${\bf p}_1({\bf p}ab) \cleq b$. Moreover, if $ra \cleq c$ and $sb \cleq c$ where $r, s \in S$, then $tc \cleq {\bf p}ab$, for
\[ t = (\lambda x.{\bf p}(rx)(sx))^A, \]
with $t \in S$ following from \reftheo{lambda}.
This shows that ${\bf p}ab$ acts as the meet $a \land b$ of $a$ and $b$.

\emph{Implication:} We have to show that if $r({\bf p}ab) \cleq c$ for some $r \in S$, then $sa \cleq b \to c$ for some $s \in S$, and conversely.

If $r \in S$ is given such that $r({\bf p}ab) \cleq c$, then:
\begin{displaymath}
    \begin{array}{rclc}
    r({\bf p}ab) & \cleq & c & \Longrightarrow \\
    (\lambda x.r({\bf p}ax))^Ab & \cleq & c & \Longrightarrow \\ 
    ({\bf E}(\lambda x.r({\bf p}ax)))^A & \cleq & b \to c & \Longrightarrow \\ 
    (\lambda y.{\bf E}(\lambda x.r({\bf p}yx)))^Aa & \cleq & b \to c,
\end{array}
\end{displaymath}
so we can choose $s = (\lambda y.{\bf E}(\lambda x.r({\bf p}yx)))^A \in S$, using \reftheo{lambda}.

Conversely, if $sa \cleq b \to c$, then $sab \cleq c$ and therefore $r({\bf p}ab) \cleq c$ for 
\[ r = \lambda x.s({\bf p}_0x)({\bf p}_1x) \in S. \]

\emph{Binary joins:} For elements $a,b\in A$ we define
\begin{eqnarray*}
    a \lor b & := & \bigcurlywedge_c (a\to c)\to (b\to c)\to c, \\
    {\bf i}_0 &:=& (\lambda zxy.zx)^A \in S, \\
    {\bf i}_1 &:=& (\lambda zxy.zy)^A \in S.
\end{eqnarray*}
Then we have 
\begin{eqnarray*}
    {\bf i}_0a & = & (\lambda zxy.xz)^Aa \\
    & \cleq & (\lambda xy.xa)^A \\
    & = & \bcw_{x} x \to \bcw_{y} y \to xa \\
    & \cleq & \bcw_{x,y} x \to y \to xa \\
    & \cleq & \bcw_c (a \to c) \to (b \to c) \to (a \to c)a \\
    & \cleq & \bcw_c (a \to c) \to (b \to c) \to c \\
    & = & a \lor b,
\end{eqnarray*}
and ${\bf i}_1b \cleq a \lor b$ by a similar argument.

If $a \to c \in S$ and $b \to c \in S$, then $r = (\lambda x.x(a \to c)(b \to c))^A \in S$ and 
\begin{eqnarray*}
    r(a \lor b) & \cleq & (a \lor b)(a \to c)(b \to c) \\
    & \cleq & ((a \to c) \to (b \to c) \to c)(a \to c)(b \to c) \\
    & \cleq & c,
\end{eqnarray*}
showing that $a \lor b$ is the join of $a$ and $b$ with respect to $\vdash$.
\end{proof}

\subsection{Triposes from strong arrow algebras}Similarly as for implicative algebras, we can note that if $A = (A, \cleq, \to)$ is an arrow structure and $I$ is a set, then we can give $A^I$, the set of functions from $I$ to $A$, an arrow structure by choosing the pointwise ordering and implication. If $S$ is a separator on $A$, then we can define two separators on $A^I$:
\begin{eqnarray*}
    \varphi: I \to A \in S_I & :\Leftrightarrow & (\forall i \in I) \, \varphi(i) \in S \\
    \varphi: I \to A \in S^I & :\Leftrightarrow & \bigcurlywedge_{i \in I} \varphi(i) \in S
\end{eqnarray*}
Clearly, $S^I \subseteq S_I$, since separators are upwards closed. Again, the second separator $S^I$ will be the relevant one as it used in the following result:

\begin{theo}{strongarrowtripos} {\rm (Strong arrow tripos)}
Let $A = (A, \cleq, \to, S)$ be a strong arrow algebra. If to each set $I$ we assign the preHeyting algebra $PI = (A^I, \vdash_{S^I})$, while to each function $f: I \to J$ we assign the mapping $PJ \to PI$ sending a function $\varphi: J \to A$ to the composition $\varphi \circ f: I \to A$, then $P$ is a functor $P: {\bf Set}^{\rm op} \to {\bf preHeyt}$. Indeed, $P$ is a tripos whose generic element is the identity function $1_A: A \to A \in PA$.
\end{theo}

\begin{proof} This one in turn follows from \cite[Theorem 5.8]{ferrersantosetal17} and \refcoro{strongalgebrasareimplicativeocas}. We will again outline a proof here.

For $\varphi, \psi: I \to A$ we will write $\varphi \vdash_I \psi$ as an abbreviation of $\varphi \vdash_{S^I} \psi$. Note that this is the case precisely when there is an $r \in S$ such that $r\varphi(i) \cleq \psi(i)$ holds for all $i \in I$. Instead of $\varphi(i)$ we will often write $\varphi_i$ to reduce the use of brackets.

Because $f^* = Pf$ preserves the implication and infima for any $f: I \to J$, it will preserve the interpretation of $\lambda$-terms and hence the preHeyting algebra structure. The functoriality of $P$ is clear, so we do indeed obtain a functor $P: {\bf Set}^{\rm op} \to {\bf preHeyt}$.

We move on to the left and right adjoints. Let $f: I \to J$ be a function and define for $j \in J$:
\begin{eqnarray*}
\forall_f(\alpha)_j & := & \bigcurlywedge_{i \in f^{-1}(j)}\alpha_i \\
\exists_f(\alpha)_j & := & \bigcurlywedge_c\big( \, \bigcurlywedge_{i \in f^{-1}(j)}\alpha_i \to c \, \big)\to c
\end{eqnarray*}
We can prove $$f^*\beta\vdash_J \alpha \Longleftrightarrow\beta\vdash_I\forall_f\alpha$$ by showing:
\begin{displaymath}
    \begin{array}{lclc}
    f^*\beta\to\alpha\in S^I &\Longleftrightarrow &\bigcurlywedge_{i\in I} \beta_{f(i)}\to \alpha_i\in S &\Longleftrightarrow\\ \\
    \bigcurlywedge_{j\in J}\bigcurlywedge_{i \in f^{-1}(j)}\beta_j\to \alpha_i\in S&\Longleftrightarrow&\bigcurlywedge_{j\in J}\beta_j\to\bigcurlywedge_{i \in f^{-1}(j)} \alpha_{i}\in S &\Longleftrightarrow\\ \\ 
    \beta\to\forall_f\alpha\in S^J
    \end{array}
\end{displaymath}
    Here the third equivalence follows from $\mathbf{a}'\in S$, using that $A$ is a strong arrow algebra.

Next, we would like to show $$\alpha\vdash_I f^*\beta \Longleftrightarrow \exists_f\alpha\vdash_J\beta.$$
Let us first suppose that $r \in S$ is such that $r \cleq \alpha_i \to \beta_{fi}$ holds for all $i \in I$. If $j \in J$, then $r \cleq \bcw_{i \in f^{-1}(j)} \alpha_i \to \beta_j$ and hence for $s := (\lambda x.xr)^A \in S$ we have
\begin{eqnarray*}
    s(\exists_f(\alpha))_j & \cleq & \exists_f(\alpha)_jr \\
    & \cleq & \big( \, (\bcw_{i \in f^{-1}(j)} \alpha_i \to \beta_j) \to \beta_j \, \big) r \\
    & \cleq & \beta_j.
\end{eqnarray*}

For the other direction, suppose $\exists_f\alpha\vdash_J\beta$, so $s:=\bigcurlywedge_{j\in J}\exists_f\alpha_j\to \beta_j\in S$. We calculate:\begin{eqnarray*}
   (\lambda x.s(\lambda y.yx))^A
   &= & \bigcurlywedge_{x}x\to s(\bigcurlywedge_y y\to yx)\\
   &\preccurlyeq & \bigcurlywedge_{j\in J}\bigcurlywedge_{i \in f^{-1}(j)}\alpha_i\to s\big(\bigcurlywedge_{c}\big(\bigcurlywedge_{i' \in f^{-1}(j)}\alpha_{i'}\to c\big)\to \big(\bigcurlywedge_{i' \in f^{-1}(j)}\alpha_{i'}\to c\big)\alpha_i\big)\\
   &\preccurlyeq & \bigcurlywedge_{j\in J}\bigcurlywedge_{i \in f^{-1}(j)}\alpha_i\to s\big(\bigcurlywedge_{c}\big(\bigcurlywedge_{i' \in f^{-1}(j)}\alpha_{i'}\to c\big)\to c)\\
   &= & \bigcurlywedge_{j\in J}\bigcurlywedge_{i \in f^{-1}(j)}\alpha_i\to \big(\bigcurlywedge_{j\in J}\exists_f\alpha_j\to \beta_j\big)\exists_f\alpha_j\\
   &\preccurlyeq & \bigcurlywedge_{j\in J}\bigcurlywedge_{i \in f^{-1}(j)}\alpha_i\to \beta_j\\
   &= & \bigcurlywedge_{i\in I}\alpha_i\to \beta_{f(i)} \in S.
\end{eqnarray*}
Hence $\alpha\vdash_I f^*\beta$, as desired.

Now for the Beck-Chevalley condition: it suffices to consider those pullback squares in $\mathbf{Set}$
\[\begin{tikzcd}
	I && {I_1} \\
	\\
	{I_2} && J
	\arrow["{f_1}", from=1-1, to=1-3]
	\arrow["{f_2}"', from=1-1, to=3-1]
	\arrow["{g_2}"', from=3-1, to=3-3]
	\arrow["{g_1}", from=1-3, to=3-3]
\end{tikzcd}\]
for which \begin{itemize}
    \item $I=\{(a,b)\in I_1\times I_2\mid g_1(a)=g_2(b)\}$
    \item the $f_i$ are just the projections to the $i$th coordinate.
\end{itemize}

We need to check that the square \[\begin{tikzcd}
	PI && {PI_1} \\
	\\
	{PI_2} && PJ
	\arrow["{Pf_2}", from=3-1, to=1-1]
	\arrow["{\forall_{f_1}}", from=1-1, to=1-3]
	\arrow["{\forall_{g_2}}"', from=3-1, to=3-3]
	\arrow["{Pg_1}"', from=3-3, to=1-3]
\end{tikzcd}\] commutes. So let $\alpha\in PI_2$. Then \begin{displaymath} \begin{array}{lclc}
    Pg_1\circ\forall_{g_2}(\alpha)&=&\big((\forall_{g_2}\alpha)_{g_1(i_1)}\big)_{i_1\in I_1}
    &= \\ \\ \Big(\bigcurlywedge_{\stackrel{i_2
    \in I_2}{g_2(i_2)=g_1(i_1)}} \alpha_{i_2}\Big)_{i_1\in I_1}&=&\Big(\bigcurlywedge_{\stackrel{(i_1,i_2)
    \in I}{f_1(i_1,i_2)=i_1'}} \alpha_{i_2}\Big)_{i_1'\in I_1}
    &= \\ \\ \forall_{f_1}(\alpha_{i_2})_{(i_1,i_2)\in I}&= & \forall_{f_1}\circ P{f_2}(\alpha)
\end{array}
\end{displaymath}

Finally, for the generic element we take $\Sigma:=A$ and $\sigma:= 1_A\in PA$.
\end{proof}

If the arrow algebra is compatible with joins, we can give a much simpler description of the existential quantifier, as follows.

\begin{lemm}{CompExt}
    If the arrow algebra $A$ is compatible with joins we can define the left adjoint by $$\exists_f(\alpha):= \big(\bigcurlyvee_{f(i)=j}\alpha_i\big)_{j\in J}. $$
\end{lemm}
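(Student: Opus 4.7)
The plan is to verify the universal property directly, showing that the formula $\exists_f(\alpha)_j := \bigcurlyvee_{f(i)=j}\alpha_i$ satisfies the adjunction $\exists_f\alpha \vdash_J \beta \Longleftrightarrow \alpha \vdash_I f^*\beta$ for all $\alpha \in PI$ and $\beta \in PJ$. Once this is established, the new $\exists_f$ is necessarily naturally isomorphic (in the logical ordering) to the one constructed in \reftheo{arrowtripos}, so Beck-Chevalley transfers for free.

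First, I would unpack both sides of the desired equivalence in terms of membership in $S$. By definition, $\alpha \vdash_I f^*\beta$ holds iff $\bigcurlywedge_{i \in I}(\alpha_i \to \beta_{f(i)}) \in S$, while $\exists_f\alpha \vdash_J \beta$ holds iff $\bigcurlywedge_{j \in J}\bigl((\bigcurlyvee_{f(i)=j}\alpha_i) \to \beta_j\bigr) \in S$. So the lemma reduces to showing these two elements of $A$ generate the same truth condition with respect to $S$; in fact, I will show they are equal as elements of $A$.

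Next, I would apply compatibility with joins. By \refdefi{compwithjoins}, for each fixed $j \in J$ we have
\[ \bigl(\bigcurlyvee_{f(i)=j}\alpha_i\bigr) \to \beta_j \;=\; \bigcurlywedge_{f(i)=j}(\alpha_i \to \beta_j). \]
Taking the meet over all $j \in J$ and reindexing the iterated meet along the bijection $I \cong \{(i,j) : f(i) = j\}$ (induced by $i \mapsto (i, f(i))$) yields
\[ \bigcurlywedge_{j \in J}\Bigl(\bigcurlyvee_{f(i)=j}\alpha_i\Bigr) \to \beta_j \;=\; \bigcurlywedge_{j \in J}\bigcurlywedge_{f(i)=j}(\alpha_i \to \beta_j) \;=\; \bigcurlywedge_{i \in I}(\alpha_i \to \beta_{f(i)}). \]
Both sides therefore belong to $S$ simultaneously, giving the desired equivalence.

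There is no real obstacle here: the only nontrivial ingredient is compatibility with joins, which is precisely what allows the pointwise join to interact correctly with the implication on the right. Without that hypothesis, one would at best only get one of the inequalities from equation~(\ref{theemptyone}), and the simpler formula would fail to be a genuine left adjoint, which is why we had to resort to the double-negation style definition in \reftheo{arrowtripos}.
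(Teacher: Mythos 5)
Your proof is correct and follows essentially the same route as the paper: unpack both entailments as membership of a single infimum in $S$, reindex the meet over $I$ as a double meet over $j \in J$ and the fibre $f^{-1}(j)$, and use compatibility with joins to identify $\bigcurlywedge_{f(i)=j}(\alpha_i \to \beta_j)$ with $(\bigcurlyvee_{f(i)=j}\alpha_i)\to\beta_j$, so that the two elements of $A$ coincide. The paper presents exactly this as a one-line chain of equalities; your additional remark that the adjunction forces agreement with the $\exists_f$ of \reftheo{arrowtripos} up to isomorphism (so Beck--Chevalley is inherited) is a harmless and correct supplement.
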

\begin{proof}
    This follows from the following straightforward calculation: \begin{eqnarray*}\bigcurlywedge_{i\in I}\alpha_i\to (f^*\beta)_i&= & 
        \bigcurlywedge_{i\in I} \alpha_i\to\beta_{f(i)}\\&= & \bigcurlywedge_{j\in J}\bigcurlywedge_{f(i)=j}(\alpha_i\to\beta_{j})\\
        &= & \bigcurlywedge_{j\in J}(\bigcurlyvee_{f(i)=j}\alpha_i)\to\beta_j \\ & = & \bigcurlywedge_{j\in J}(\exists_f\alpha)_j\to\beta_j.
    \end{eqnarray*}
\end{proof}

\subsection{Triposes from general arrow algebras}
We will now show that every arrow algebra gives rise to a tripos. We will do this by showing that every arrow algebra is equivalent to a strong one, in that they give rise to equivalent indexed preorders.

\begin{defi}{functional} Let $(A, \cleq)$ be an arrow structure. We say that an element $a \in A$ is \emph{functional} if
    \[ a = \bcw_{b,c \in A} \{ b \to c \, : \, a \cleq b \to c \}. \]
We write $A_{\rm fun}$ for the collection of functional elements in $A$. 
\end{defi}

The following is immediate from the definitions:

\begin{lemm}{propertiesofregularelements}
    Implications are functional and functional elements are closed under arbitrary infima $\bcw$. Hence the combinators $\km, \sm, \am$ are functional.
\end{lemm}

\begin{coro}{anotherarrowstructure}
    If $(A, \cleq, \to)$ is an arrow structure, then so is $(A_{\rm fun}, \cleq, \to)$. Indeed, if $(A, \cleq, \to, S)$ is an arrow algebra, then so is $(A_ {\rm fun}, \cleq, \to, S_{\rm fun})$ where $S_{\rm fun} = S \cap A_{\rm fun}$.
\end{coro}

\begin{prop}{combaforregularelements}
    Let $A$ be an arrow structure, $a \in A$ and $B \subseteq A_{\rm fun}$. Then
    \[ \am \cleq (\bcw_{b \in B} a \to b) \to a \to \bcw_{b \in B} b. \]
    Therefore the arrow algebra $(A_ {\rm fun}, \cleq, \to, S_{\rm fun})$ will be strong for any arrow algebra $(A, \cleq, \to, S)$.
\end{prop}
\begin{proof}
    For $b \in B$, let us write
    \[ C_b := \{ x \to y \, : \, b \cleq x \to y \} \]
    and $C := \bigcup C_b$, so that $b = \bcw C_b$ and $\bcw_{c \in C} c = \bcw_{b \in B} b$. If $c \in C$, then $c \in C_{b_0}$ for some $b_0 \in B$, and therefore
    \[ \bcw_{b \in B} a \to b \cleq a \to b_0 \cleq a \to c. \]
    This implies
    \[ \bcw_{b \in B} a \to b \cleq \bcw_{c \in C} a \to c, \]
    from which it follows that
    \begin{eqnarray*}
        \am & \cleq & (\bcw_{c \in C} a \to c) \to a \to \bcw_{c \in C} c \\
        & \cleq & (\bcw_{b \in B} a \to b) \to a \to \bcw_{b \in B} b
    \end{eqnarray*}
\end{proof}

The following result shows that the arrow algebras $A_{\rm fun}$ and $A$ are ``equivalent''.

\begin{prop}{partialcanberemoved} Let $A$ be an arrow algebra with separator $S$. Every element $a \in A$ is logically equivalent to the functional element $\top \to a$, in a uniform fashion. More precisely, we have
    \[ \bigcurlywedge_{a \in A} a \to (\top \to a) \in S \quad \mbox{ and } \bigcurlywedge_{a \in A} (\top \to a) \to a \in S. \]
It follows that the preHeyting algebras and indexed preorders induced by $A$ and $A_{\rm fun}$ are equivalent.
\end{prop}
\begin{proof}
    The former follows from ${\bf k} \in S$. To prove the latter, we note that $B \to (B \to A) \to A$ is intuitionistic tautology, so
    \[ \bigcurlywedge_{a,b} b \to (b \to a) \to a \in S \]
    and
    \[ \bigcurlywedge_{a} \top \to (\top \to a) \to a \in S. \]
    Since $\top \in S$, the result follows from \reflemm{sepclosedunderlimitedMP}.
\end{proof}

In particular we can deduce:

\begin{prop}{Heytconstr} 
    Let $A=(A,\preccurlyeq,\to,S)$ be an arrow algebra. In that case we can define a preorder $\vdash$ on $A$ as follows: 
    $$a\vdash b\text{ if }a\to b\in S.$$
    We refer to $\vdash$ as the \emph{logical ordering} and with this preorder, $(A,\vdash)$ is a preHeyting algebra with $\to$ as Heyting implication.
\end{prop}

\begin{theo}{arrowtripos} {\rm (Arrow tripos)}
    Let $A = (A, \cleq, \to, S)$ be an arrow algebra. If to each set $I$ we assign the preHeyting algebra $PI = (A^I, \vdash_{S^I})$, while to each function $f: I \to J$ we assign the mapping $PJ \to PI$ sending a function $\varphi: J \to A$ to the composition $\varphi \circ f: I \to A$, then $P$ is a functor $P: {\bf Set}^{\rm op} \to {\bf preHeyt}$. Indeed, $P$ is a tripos whose generic element is the identity function $1_A: A \to A \in PA$.
    \end{theo}
    
\begin{rema}{workofumberto} In subsequent work by Umberto Tarantino \cite{tarantino25} the reader can find a precise definition of equivalence of arrow algebras, which implies that the associated indexed preorders are equivalent. \refprop{partialcanberemoved} implies that the arrow algebras $A$ and $A_{\rm fun}$ are equivalent in Tarantino's sense.
\end{rema}

\begin{rema}{equivalentimplicativealgebra}
    It follows from Miquel's work \cite{miquel20b} that every arrow algebra is equivalent to an implicative algebra. The construction is as follows: if $A$ is an arrow algebra, then write $I_0 := {\rm Pow}(A)^+ \times A$ and $I = {\rm Pow}(I_0)$. We have a map $\varphi_0: I_0 \to A$ defined as
    \[ \varphi_0(Y_1,\ldots,Y_n,y) := (\bcw Y_1) \to (\bcw Y_2) \to \ldots \to (\bcw Y_n) \to y, \]
    where $Y_i \subseteq A, y \in A$. In particular, we will write $\varphi_0[i] = \{ \varphi_0(s) \, : \, s \in i \} \subseteq A$ for any $i \in I$. We can now give $I$ an implicative structure by ordering it through reverse inclusion and writing
    \[ i \to j := \{ (X,Y_1,\ldots,Y_n,y) \, : \, \varphi_0[i] \subseteq X, (Y_1,\ldots,Y_n,y) \in j \} \]
    for any $i, j \in I$. If $S^A$ is the separator on $A$, then
    \[ S^I := \{ i \in I \, : \, \bcw \varphi_0[i] \in S_A \} \]
    defines a separator on $I$, with the implicative algebra $(I, S^I)$ inducing a tripos equivalent to the one induced by $(A, S^A)$. (Indeed, the map $\varphi: I \to A$ defined by $\varphi(i) = \bcw \phi_0[i]$ is an equivalence in Tarantino's sense.) Since implicative algebras are in particular strong arrow algebras, this provides a different proof of the fact that every arrow is equivalent to a strong one. However, due to the complicated nature of this construction, it is much easier to understand the tripos associated to $(A, S^A)$ through $(A_{\rm fun}, S^A_{\rm fun})$ rather than through $(I, S^I)$. Indeed, we do not think that trying to understand the tripos induced by $(A, S^A)$ through the implicative algebra $(I, S^I)$ is going to be very illuminating.
\end{rema}    

\section{Subtriposes from nuclei}

We have seen in \refprop{subarrowalgebras} that whenever $j$ is a nucleus on an arrow algebra $A$, we obtain a new arrow algebra $A_j$. The result in the previous section says that we obtain triposes from both $A_j$ and $A$. The natural question is how these triposes are related. Locale theory suggests that the tripos associated to $A_j$ should be a subtripos of the one associated to $A$ and in this section we will show that that is indeed the case.

So let us first define what we mean by a subtripos.

\begin{defi}{geometricmorphism} (Geometric morphism) Let $P$ and $Q$ be triposes over $\mathbf{Set}$. A \emph{geometric morphism} $Q\to P$ is a pair $\Phi=(\Phi_+,\Phi^+)$ consisting of natural transformations $\Phi^+:P\to Q$ and $\Phi_+:Q\to P$. We require that for any set $X$ the monotone map $\Phi^+_X: P(X)\to Q(X)$ is left adjoint to $\Phi_+^X$ and preserves finite meets.
\end{defi}

\begin{defi}{inclusionoftriposes} (Inclusion of triposes) Let $P$ and $Q$ be triposes over $\mathbf{Set}$. A geometric morphism $\Phi=(\Phi_+,\Phi^+):Q\to P$ is called an \emph{inclusion of triposes} if $\Phi_+^X$ is full for every set $X$. (In other words, the monotone map $\Phi_+^X$ also reflects the preorder.) In this case we call $Q$ a \emph{subtripos} of $P$.
\end{defi}

These definitions allow us to state the result we wish to prove.

\begin{prop}{subtriposfromnucleus} {\rm (Subtripos induced by a nucleus)}
    Let $A$ be an arrow algebra and $j$ a nucleus on $A$. Let $P$ be the tripos induced by $A$ and $P_j$ the one induced by $A_j$. Then $P_j$ is a subtripos of $P$.
\end{prop}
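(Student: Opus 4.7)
The plan is to construct an explicit geometric morphism $\Phi = (\Phi_+, \Phi^+) \colon P_j \to P$ by taking $\Phi^+_X \colon P(X) \to P_j(X)$ to be the identity on the common underlying set $A^X$, and $\Phi_+^X \colon P_j(X) \to P(X)$ to be pointwise postcomposition with $j$, i.e.\ $\Phi_+(\psi) := j \circ \psi$. Everything will hinge on the following bridge lemma, which I would prove first: for all $\varphi, \psi \in A^X$,
\[ \varphi \vdash_j \psi \iff \varphi \vdash j \circ \psi, \]
where $\vdash_j$ is the preorder of $P_j(X)$ and $\vdash$ that of $P(X)$. Unwinding the definitions, this reduces to the equivalence $j\bigcurlywedge_x (\varphi(x) \to j\psi(x)) \in S \iff \bigcurlywedge_x (\varphi(x) \to j\psi(x)) \in S$. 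The $\Leftarrow$ direction is a single modus ponens against the nucleus axiom $\bigcurlywedge_a a \to ja \in S$. For $\Rightarrow$ one uses monotonicity of $j$ to pass $j$ inside the meet, then \reflemm{nuclproperties}(iii) with \reflemm{sepclosedunderlimitedMP} to obtain $\bigcurlywedge_x j\varphi(x) \to jj\psi(x) \in S$, collapses $jj\psi(x)$ to $j\psi(x)$ via \reflemm{nuclproperties}(i), and finally precomposes with $\bigcurlywedge_x \varphi(x) \to j\varphi(x) \in S$ to recover the desired inequality.

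With the bridge in hand, the adjunction $\Phi^+ \dashv \Phi_+$ is immediate: $\Phi^+\varphi \vdash_j \psi$ iff $\varphi \vdash_j \psi$ iff $\varphi \vdash j\circ\psi$ iff $\varphi \vdash \Phi_+\psi$. Monotonicity of $\Phi^+$ follows from the inclusion $\vdash\,\subseteq\,\vdash_j$ (a one-step consequence of $a \to ja \in S$), and monotonicity of $\Phi_+$ from \reflemm{nuclproperties}(ii). Full-faithfulness of $\Phi_+$ amounts to $\varphi \vdash_j \psi$ iff $j\varphi \vdash j\psi$; via the bridge this becomes $\varphi \vdash j\psi$ iff $j\varphi \vdash j\psi$, with $\Rightarrow$ by \reflemm{nuclproperties}(ii) and (i), and $\Leftarrow$ by composition with $a \to ja$. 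Naturality in $X$ is automatic because both components act pointwise while reindexing $f^*$ is precomposition.

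The one step that requires genuine work is preservation of finite meets by $\Phi^+$; I expect this to be the main obstacle, because it is the only place one really needs nuclei to interact with the Heyting structure rather than only with separators and modus ponens. Preservation of $\top$ is trivial. For binary meets, the projections $\varphi \wedge \psi \vdash_j \varphi$ and $\varphi \wedge \psi \vdash_j \psi$ transfer from $P$ to $P_j$ by monotonicity of $\Phi^+$, so it remains to verify the universal property. Given $\chi \vdash_j \varphi$ and $\chi \vdash_j \psi$, the bridge lemma turns these into $\chi \vdash j\varphi$ and $\chi \vdash j\psi$ and the target $\chi \vdash_j \varphi \wedge \psi$ into $\chi \vdash j(\varphi \wedge \psi)$, so it suffices to show $j\varphi \wedge j\psi \vdash j(\varphi \wedge \psi)$ in $P$. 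I would prove this by currying: from $\varphi \vdash \psi \to (\varphi \wedge \psi)$ apply \reflemm{nuclproperties}(ii) to obtain $j\varphi \vdash j(\psi \to (\varphi \wedge \psi))$, then (iii) of the same lemma gives $j\varphi \vdash j\psi \to j(\varphi \wedge \psi)$, which uncurries to the required inequality. This completes the verification that $\Phi$ is an inclusion of triposes.
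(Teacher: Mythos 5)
Your proposal is correct and takes essentially the same approach as the paper: the same geometric morphism with $\Phi^+$ the identity and $\Phi_+ = j \circ -$, the same key bridge equivalence $\varphi \vdash_j \psi \Leftrightarrow \varphi \vdash j\circ\psi$ established by the same sequence of nucleus lemmas, and the same reduction of finite-meet preservation to $j\varphi \times j\psi \vdash j(\varphi \times \psi)$. The only (cosmetic) difference is that you re-derive this last inequality inline by currying, whereas the paper cites it as part (iv) of \reflemm{morenuclproperties}, whose proof is that very currying argument.
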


To prove it, we need the following lemma, which is a continuation of \reflemm{nuclproperties}.

\begin{lemm}{morenuclproperties} Suppose $A$ is an arrow algebra with separator $S$. If $j$ is a nucleus on $A$ and $\wedge$ the meet in the logical ordering as in \refprop{Heytconstr}, then we have:
    \begin{enumerate}
        \item[(iv)] $\bigcurlywedge_{a,b\in A} j(a \land b)\rightarrow(ja \land jb)\in S$ and $\bigcurlywedge_{a,b\in A}(ja\land jb)\to j(a\land b)\in S$.
        \item[(v)] $j\big(\bigcurlywedge_{x\in X}x\big)\preccurlyeq\bigcurlywedge_{x\in X}jx$ for any $X\subseteq A$.
    \end{enumerate}
\end{lemm}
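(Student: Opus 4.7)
The plan is as follows. For (v) the proof is immediate from monotonicity of $j$ alone (property (1) of \refdefi{nucleus}): since $\bigcurlywedge_{x \in X} x \preccurlyeq y$ for each $y \in X$, monotonicity gives $j(\bigcurlywedge_{x \in X} x) \preccurlyeq jy$, and taking the infimum over $y \in X$ yields the result. The separator plays no role here. For (iv) I will prove the two inequalities separately, in each case working inside the preHeyting algebra $(A, \vdash)$ of \refprop{Heytconstr} and converting entailments into global meets in $S$ by means of the closed $\lambda$-term interpretations produced by \reftheo{lambda} together with \reflemm{sepclosedunderlimitedMP}. A key observation is that the entailments recorded in \reflemm{nuclproperties} are already stated as global meets lying in $S$, so composing them with the combinator $\mathbf{b}$ preserves uniformity in $a, b$.

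The first direction, $\bigcurlywedge_{a,b} j(a\times b) \to (ja \times jb) \in S$, is just monotonicity of the binary meet transported along $j$. From the meet-property of $\times$ established in \refprop{Heytconstr} we have $\bigcurlywedge_{a,b}(a\times b) \to a \in S$ and $\bigcurlywedge_{a,b}(a\times b) \to b \in S$; feeding each of these through \reflemm{nuclproperties}(ii) yields $\bigcurlywedge_{a,b} j(a\times b) \to ja \in S$ and $\bigcurlywedge_{a,b} j(a\times b) \to jb \in S$. The same closed $\lambda$-term used in \refprop{Heytconstr} to derive $c \vdash a\times b$ from $c \vdash a$ and $c \vdash b$ then combines these two into the claim.

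The second direction, $\bigcurlywedge_{a,b} (ja \times jb) \to j(a\times b) \in S$, mimics the standard locale-theoretic proof that a nucleus preserves binary meets. Property (2) of \refdefi{nucleus} already gives $\bigcurlywedge_{a,b}(a\times b) \to j(a\times b) \in S$, and applying the adjunction $(-) \times b \dashv (b \to -)$ from \refprop{Heytconstr} uniformly in $a,b$ converts this into $\bigcurlywedge_{a,b} a \to (b \to j(a \times b)) \in S$. From here I would apply \reflemm{nuclproperties}(ii) to get $\bigcurlywedge_{a,b} ja \to j(b \to j(a\times b)) \in S$, chain on \reflemm{nuclproperties}(iii) to obtain $\bigcurlywedge_{a,b} ja \to (jb \to jj(a\times b)) \in S$, and finally use \reflemm{nuclproperties}(i) (post-composed under $jb \to -$) to collapse $jj(a\times b)$ to $j(a\times b)$, arriving at $\bigcurlywedge_{a,b} ja \to (jb \to j(a\times b)) \in S$. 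A second use of the adjunction then delivers the desired inequality.

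The main technical nuisance, rather than a genuine obstacle, is verifying that each use of the adjunction from \refprop{Heytconstr} really goes through uniformly in $a, b$. This is automatic because that adjunction is witnessed by closed $\lambda$-terms whose interpretations belong to every separator by \reftheo{lambda}, so applying such a term to a universally quantified element of $S$ produces another universally quantified element of $S$; the remaining compositions are all routine applications of $\mathbf{b}$ and \reflemm{sepclosedunderlimitedMP}.
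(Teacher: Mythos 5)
Your proof is correct and takes essentially the same route as the paper: (v) by monotonicity of $j$ alone, the first half of (iv) by pushing the two projections through \reflemm{nuclproperties} and then pairing, and the second half by reducing everything to $\bigcurlywedge_{a,b}\, ja \to jb \to j(a\times b) \in S$ and uncurrying. The only (harmless) difference is that for this last part the paper starts from the pairing entailment $\bigcurlywedge_{a,b}\, a \to b \to a\times b \in S$ and applies points (ii) and (iii) of \reflemm{nuclproperties} directly, which avoids your extra detour through $jj(a\times b)$ and the idempotency point (i).
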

\begin{proof}
    \begin{enumerate}
        \item[(iv)] This follows from $\bigcurlywedge_{a,b\in A}j(a\to b)\to (ja\to jb)\in S$ (point (iii) of \reflemm{nuclproperties}) combined with the knowledge that $\land$ and $\to$ act as the conjunction and implication.

        Indeed, from $\bigcurlywedge_{a,b}a\land b\to a\in S$ and $\bigcurlywedge_{a,b}a\land b\to b\in S$, we get $\bigcurlywedge_{a,b}j(a\land b)\to ja\in S$ and $\bigcurlywedge_{a,b}j(a\land b)\to jb\in S$. This gives the first claim.

        For the other, we start by noting $\bigcurlywedge_{a,b}a\to (b\to a\land b)\in S$, and thus by using point (iii) twice, we obtain $\bigcurlywedge_{a,b}ja\to (jb\to j(a\land b))\in S$.
        \item[(v)] For all $x\in X$ we have $\bigcurlywedge_{x\in X}x\preccurlyeq x$. By monotonicity we have $j\big(\bigcurlywedge_{x\in X}x\big)\preccurlyeq jx$ for all $x\in X$, giving the desired result.
    \end{enumerate}
\end{proof}

\begin{proof} (Of \refprop{subtriposfromnucleus}.) Before we start, it should be noted that if $j$ is a nucleus on $A$ with separator $S$, then $j^X = j \circ -$ is also a nucleus on $A^X$ with separator $S^X$. This means that \reflemm{nuclproperties} and \reflemm{morenuclproperties} can be applied to $j^X$ as well.

Moreover, we have \[ \alpha \vdash_j \beta \mbox{ in } P_j(X) \mbox{ if and only if } \alpha \to j^X \beta \in S^X. \] In other words, we claim that $$j\big(\bigcurlywedge_{x\in X}\alpha(x)\to j\beta(x)\big)\in S\Longleftrightarrow\bigcurlywedge_{x\in X}\alpha(x)\to j\beta(x)\in S.$$ The right to left implication is trivial. For the left to right direction we first use point (v) of \reflemm{morenuclproperties} to see that $j\big(\bigcurlywedge_{x\in X}\alpha(x)\to j\beta(x)\big)\in S$ implies $j^X(\alpha\to j^X\beta)\in S^X$. From here we use \reflemm{nuclproperties} for $j^X$: 
\begin{align*}
    &j^X(\alpha\to j^X\beta)\in S^X&&\Rightarrow&& j^X\alpha\to j^Xj^X\beta\in S^X&&\Rightarrow\\
    &j^X\alpha\to j^X\beta\in S^X&&\Rightarrow&&\alpha\to j^X\beta\in S^X
\end{align*}

From this and the fact that $j^X$ is a nucleus it follows fairly easily that $\Phi_+$ and $\Phi^+$ defined as:\begin{align*}
            &\Phi_{+}^X:P_j X\to PX:\alpha\mapsto j\circ\alpha\\
            &\Phi_X^+:PX\to P_j X:\alpha\mapsto\alpha
        \end{align*} are natural transformations, which are pointwise adjoints, in the sense that $$\Phi^+_X\alpha\vdash_j\beta\Longleftrightarrow\alpha\vdash\Phi^X_+\beta$$ for any set $X$. In addition, it also follows easily from this that $\Phi_+$ at each point preserves and reflects the preorder, that is, $\alpha\vdash_j\beta\Leftrightarrow j\alpha\vdash j\beta$.

        What remains to be checked is that $\Phi^+$ preserves finite meets. However, if $\gamma \vdash_j \alpha$ and $\gamma \vdash_j \beta$, then $\gamma \to j^X\alpha \in S^X$ and $\gamma \to j^X\beta \in S^X$. Therefore $\gamma \to j^X\alpha \land j^X\beta \in S^X$ and $\gamma \to j^X(\alpha \land \beta) \in S^X$ by (iv) of \reflemm{morenuclproperties}. Hence $\gamma \vdash_j \alpha \land \beta$, as desired.
    \end{proof}

\begin{rema}{everysubtriposfromnucleus} So every nucleus on an arrow algebra gives rise to a subtripos of the tripos associated to the arrow algebra. In \cite{tarantino25} Tarantino has shown that every subtripos arises in that way.
\end{rema}

The tripos $P_j$ we have just constructed is equivalent to another tripos which we will denote $P^j$. It turns out that the latter tripos is often more useful for doing calculations. Indeed, the main benefit of $P^j$ is that it is, pointwise, a subpreorder of $P$. That is, the preorder is the same, it only has fewer elements.

\begin{prop}{nucltripagain}
    The tripos $P_j$ and functor $$P^j: I\mapsto\{\alpha\in PI\mid j^I\alpha\vdash\alpha\}$$ are equivalent, making $P^j$ also a tripos.
    \begin{proof}
        For this we use the geometric morphism as defined in the proof of \refprop{subtriposfromnucleus}. It is easy to see that $\Phi:P_j\to P^j$ is well-defined. What is left is to show is that $\Phi^+\Phi_+$ and $\Phi_+\Phi^+$ are equivalent to the identities. 
        
        For this let $I$ be a set and $\alpha\in P_j I$. Now $\Phi^+_I\Phi_+^I\alpha=j^I\alpha$. From the proof of the previous proposition it easily follows that we have $j^I\alpha\vdash_j\alpha$. From here we can conclude that $j^I\alpha$ and $\alpha$ are equivalent because $\alpha \vdash j^I \alpha$ and $\alpha\vdash_j j^I\alpha$ are true in virtue of $j^I$ being a nucleus.

        Now let $\alpha\in P^jI$. Then $\Phi_+^I\Phi_I^+\alpha=j^I\alpha$, but this is equivalent to $\alpha$ by definition of $P^j$.
    \end{proof}
\end{prop}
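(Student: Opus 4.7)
The plan is to construct the equivalence out of the geometric morphism $\Phi = (\Phi_+,\Phi^+)$ built in the proof of \refprop{subtriposfromnucleus}, by restricting and corestricting its components so that they relate $P_j$ and $P^j$ rather than $P_j$ and $P$. First I would observe that $\Phi_+^I: P_jI \to PI$, which sends $\alpha$ to $j\circ\alpha$, in fact lands in $P^jI$: applying point (i) of \reflemm{nuclproperties} to the induced nucleus $j^I$ on $A^I$ yields $j^I(j^I\alpha) \vdash j^I\alpha$, so $j^I\alpha\in P^jI$. Dually, $\Phi^+_I: PI \to P_jI$ is the identity on underlying functions, and therefore restricts to a well-defined map $P^jI \to P_jI$. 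Naturality in $I$ is inherited from $\Phi$.

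Next I would check that the two composites are pointwise isomorphic to the identities. Both send $\alpha$ to $j^I\alpha$. For $\Phi^+_I\Phi_+^I\alpha = j^I\alpha$ compared with $\alpha$ inside $P_jI$, the relation $j^I\alpha\vdash_j\alpha$ unfolds to $j^I\alpha\to j^I\alpha\in S^I$, which holds since $\mathbf{i}\in S$, while $\alpha\vdash_j j^I\alpha$ unfolds to $\alpha\to j^Ij^I\alpha\in S^I$, which follows by composing the two instances $\alpha\to j^I\alpha\in S^I$ and $j^I\alpha\to j^Ij^I\alpha\in S^I$ of the second nucleus axiom via transitivity of $\vdash$ (given by $\mathbf{b}\in S$ from \refcoro{othercombinators}). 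For $\Phi_+^I\Phi^+_I\alpha = j^I\alpha$ compared with $\alpha$ inside $P^jI$, the order is the restriction of $\vdash$ from $PI$, and the two required inequalities $\alpha\vdash j^I\alpha$ and $j^I\alpha\vdash\alpha$ are, respectively, the second nucleus axiom applied pointwise and the very definition of $P^jI$.

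With both composites equal to the identity up to equivalence, $\Phi^+$ and $\Phi_+$ assemble into an equivalence of pseudofunctors $P_j\simeq P^j$. Since $P_j$ is a tripos by \reftheo{arrowtripos} applied to the arrow algebra $A_j$ from \refprop{subarrowalgebras}, and the entire tripos structure (preHeyting operations, quantifiers, Beck--Chevalley, generic element) transports along an equivalence, $P^j$ inherits a tripos structure as well. The main obstacle in this argument is really just the bookkeeping: keeping track of $\vdash$ versus $\vdash_j$ and making sure the restrictions are well-defined; once that is done, everything reduces to the nucleus properties already established in \reflemm{nuclproperties} and \reflemm{morenuclproperties}.
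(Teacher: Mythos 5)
Your proposal is correct and follows essentially the same route as the paper: restrict the geometric morphism $(\Phi_+,\Phi^+)$ from \refprop{subtriposfromnucleus} to an adjoint pair between $P_j$ and $P^j$, check both composites send $\alpha$ to $j^I\alpha$ and are isomorphic to the identity via the nucleus axioms (using the characterisation $\alpha\vdash_j\beta\Leftrightarrow\alpha\to j^I\beta\in S^I$), and transport the tripos structure along the equivalence. You merely fill in a few details the paper leaves implicit, such as the explicit verification that $\Phi_+^I$ lands in $P^jI$ via \reflemm{nuclproperties}(i) and the transitivity argument for $\alpha\to j^Ij^I\alpha\in S^I$.
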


The following corollary will prove useful in the next section:

\begin{coro}{nucltripagainplus} Suppose $j$ is a nucleus on an arrow algebra $A$ with the property that $j^2 = j$ as maps $A \to A$. Then the tripos $P_j$ and functor $$\overline{P}^j: I\mapsto\{\alpha\in PI\mid j^I\alpha = \alpha\}$$ are equivalent, making $\overline{P}^j$ also a tripos.
    \begin{proof}
        If $j^2 = j$ then this tripos is equivalent to the one from the previous proposition.
    \end{proof}
\end{coro}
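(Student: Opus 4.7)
The plan is to exploit \refprop{nucltripagain}, which already gives $P_j \simeq P^j_\vdash$, where I write $P^j_\vdash(I) = \{\alpha \in PI \mid j^I \alpha \vdash \alpha\}$ for the functor in that proposition and $P^j_=(I) = \{\alpha \in PI \mid j^I \alpha = \alpha\}$ for the one in the present corollary. By composing equivalences it is then enough to exhibit an equivalence $P^j_\vdash \simeq P^j_=$ of pseudofunctors $\mathbf{Set}^{\mathrm{op}} \to \mathbf{preHeyt}$; any such equivalence transports the tripos structure, since being a tripos is invariant under pseudonatural equivalence.

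First I would define, for each set $I$, a map $F_I : P^j_=(I) \to P^j_\vdash(I)$ as the inclusion, which is clearly well-defined since $j^I\alpha = \alpha$ implies $j^I\alpha \vdash \alpha$. In the other direction I would set $G_I : P^j_\vdash(I) \to P^j_=(I)$ by $G_I(\alpha) = j^I\alpha$. The idempotence hypothesis is used precisely here: since $j^2 = j$ pointwise, $j^I(j^I\alpha) = j^I\alpha$, so $G_I$ does land in $P^j_=(I)$. Monotonicity of $G_I$ in the logical ordering follows from point (ii) of \reflemm{nuclproperties} applied to $j^I$ on $A^I$ (which is again a nucleus on the arrow algebra $A^I$ with separator $S^I$, as noted in the proof of \refprop{subtriposfromnucleus}).

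Next I would verify that $F$ and $G$ are mutually inverse up to equivalence. The composite $G_I \circ F_I$ is literally the identity on $P^j_=(I)$ by the defining equation of that set. For $F_I \circ G_I$, given $\alpha \in P^j_\vdash(I)$ I need $j^I\alpha$ to be logically equivalent to $\alpha$ in $P(I)$: the direction $\alpha \vdash j^I\alpha$ is the defining property (2) of a nucleus, and $j^I\alpha \vdash \alpha$ is the assumption defining membership in $P^j_\vdash(I)$. Naturality in $I$ is immediate because reindexing along $f \colon I \to J$ is precomposition with $f$ and therefore commutes with the pointwise operation $j^{(-)}$, so $F$ and $G$ assemble into pseudonatural transformations.

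This constructs the equivalence $P^j_\vdash \simeq P^j_=$, and composing with \refprop{nucltripagain} yields $P_j \simeq P^j_=$, which is the desired statement. There is no real obstacle here: the whole point is that $j^2 = j$ makes the image of $G$ land in the equality version rather than the preorder version, which collapses the two candidate definitions of $P^j$ into each other.
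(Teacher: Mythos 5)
Your proposal is correct and matches the paper's (one-line) proof: the paper simply asserts that under $j^2=j$ the equality-version of $P^j$ coincides, up to equivalence, with the $\vdash$-version from \refprop{nucltripagain}, which is exactly the inclusion/$j^I$ pair of maps you construct. Your elaboration of the composites and the use of the nucleus laws is the intended argument, just written out in full.
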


\begin{rema}{connectivesinPj} For future reference, it will be worthwhile to make explicit how the tripos structure in $P^j$ is calculated.
    \begin{enumerate}
        \item Implication and meets are calculated as in $P$; this follows from items (iii) and (iv) of \reflemm{nuclproperties} and \reflemm{morenuclproperties} respectively.
        
        \item The joins of $P^j$ are those of $P$ with $j$ applied to them. To see this, suppose $\alpha,\beta,\gamma\in P^jI$. Note that $\alpha\vdash\alpha \lor \beta$ implies $j\alpha\vdash j(\alpha \lor \beta)$, which in combination with $\alpha \in P^j$ gives $\alpha\vdash j(\alpha \lor \beta)$. (Similarly for $\beta$.)

        Now suppose $\alpha\vdash\gamma$ and $\beta\vdash\gamma$. Then $\alpha \lor \beta\vdash\gamma$ implies $j(\alpha \lor \beta)\vdash j\gamma$, which in combination with $\gamma \in P^j$ implies $j(\alpha \lor \beta)\vdash \gamma$.

        \item Now we claim the universal adjoint can stay the same and the existential will be the old one with $j$ applied to it. 
        
        To see this we note that if $\alpha \in P^j$, then:
        \begin{displaymath}
            \begin{array}{lclc}
                f^*\forall_f\alpha\vdash \alpha &\Rightarrow & jf^*\forall_f\alpha\vdash j\alpha & \Rightarrow\\
                f^*j\forall_f\alpha\vdash j\alpha&\Rightarrow&
                f^* j\forall_f\alpha\vdash \alpha & \Rightarrow\\
                j\forall_f\alpha\vdash\forall_f\alpha
            \end{array}
        \end{displaymath}

        In addition, we have
        \[ j\exists_f\alpha\vdash\beta \Rightarrow \exists_f \alpha \vdash \beta \Rightarrow \alpha \vdash f^* \beta \]
        and
        \[ \alpha \vdash f^* \beta \Rightarrow \exists_f \alpha \vdash \beta \Rightarrow j\exists_f \alpha \vdash j\beta \Rightarrow j\exists_f \alpha \vdash \beta, \]
        if $\alpha, \beta \in P^j$.
        \item As a generic element we can take $j: A \to A \in PA$.
    \end{enumerate}
\end{rema}

\begin{rema}{twophilosophies} The two equivalent approaches to the subtriposes induced by a nucleus correspond to the two different ways of approaching the negative translation (see \cite{vandenberg19}). The first one, which is we denoted with $P_j$, is most closely associated to the Kuroda negative translation. On this approach we keep the usual predicates and change the notion of consequence. The second one, which is the one we denoted with $P^j$, is to keep the usual notion of consequence, but restrict the predicates: this is most closely associated to the G\"odel-Gentzen negative translation. The second view is often more pleasant to work with because implication and universal quantification remain the same.
\end{rema}

\section{Applications}

In this section we wish to illustrate the flexibility of the framework we have created by showing how various modified realizability toposes fit within it. Indeed, the power is that we can construct triposes without too much effort (and without going through the laborious task of checking that they define triposes). We will not further investigate the resulting toposes -- that is a task that we hope to be able to take up in future work. 

\subsection{The modification of an arrow algebra} Let us start by showing that arrow algebras are closed under another construction provided they are \emph{binary implicative}.

\begin{defi}{binaryimplicative} Let $A$ be an arrow structure. We will call $A$ \emph{binary implicative} if
    $$a\to(b\curlywedge b')=(a\to b)\curlywedge( a\to b')$$
for all $a, b, b' \in A$. An arrow algebra will be called binary implicative if its underlying arrow structure is.
\end{defi}
Clearly, implicative algebras are also binary implicative.

\begin{prop}{Sierpinskiconstruction} {\rm (Sierpi\'nski construction)}
    Let $A$ be an binary implicative arrow algebra. Define $$A^\to:=\{x = (x_0,x_1)\in A^2\mid x_0 \preccurlyeq x_1\}$$

    If we order $A^\to$ pointwise, and define implication as $$x\to y:=(x_0\to y_0 \curlywedge x_1\to y_1,x_1\to y_1)$$ then $A^\to$ carries an arrow structure. Moreover,  $$S^\to:=\{x\in A^\to\mid x_0\in S\}$$ defines a separator on this arrow structure. 
\end{prop}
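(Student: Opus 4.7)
The plan is to first verify that $A^\to$ inherits an arrow structure pointwise, and then to check the three separator axioms for $S^\to$. Completeness of the poset $A^\to$ follows from the fact that componentwise meets and joins preserve the defining inequality $x_0 \preccurlyeq x_1$; the implication $x \to y$ lands in $A^\to$ because its zeroth component $(x_0 \to y_0) \curlywedge (x_1 \to y_1)$ is dominated by its first component $x_1 \to y_1$; and bi-monotonicity of $\to$ reduces to the corresponding property in $A$ together with the monotonicity of binary meet. Upward closure of $S^\to$ is immediate from that of $S$. For modus ponens, suppose $x \to y \in S^\to$ and $x \in S^\to$, so that $(x_0 \to y_0) \curlywedge (x_1 \to y_1) \in S$ and $x_0 \in S$; since this meet lies below $x_0 \to y_0$, upward closure of $S$ gives $x_0 \to y_0 \in S$, and MP in $S$ then yields $y_0 \in S$, i.e.\ $y \in S^\to$.

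It remains to check that the combinators $\mathbf{k}, \mathbf{s}, \mathbf{a}$ of $A^\to$ lie in $S^\to$, which by definition amounts to showing that their zeroth components belong to $S$. The recipe is uniform across the three combinators: expand all implications pointwise, apply binary implicativity to distribute $\to$ over the binary meets produced by the pointwise expansion of $\to$ in $A^\to$, and identify the resulting meet of conjuncts as bounded below by the corresponding combinator in $A$. For $\mathbf{k}$ this quickly reduces to conjunctions of expressions of the form $u \to v \to w$ with $u \preccurlyeq w$, each dominating $u \to v \to u$ and hence $\mathbf{k}_A$; the case of $\mathbf{s}$ is analogous but demands more bookkeeping.

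The main obstacle is $\mathbf{a}$. Writing $P = \bigcurlywedge_{i} x \to y^i \to z^i$ and $Q = x \to \bigcurlywedge_i y^i \to z^i$ for a generic term of the defining meet, pointwise expansion of $(y^i \to z^i)_0$ via binary implicativity produces three families of ``atoms'', namely $x_0 \to y^i_0 \to z^i_0$, $x_0 \to y^i_1 \to z^i_1$, and $x_1 \to y^i_1 \to z^i_1$, so that both $P_0$ and $Q_0$ split as triple meets indexed in the same way. A further application of binary implicativity splits $P_0 \to Q_0$ as a conjunction of three implications, each of which --- by antimonotonicity of $\to$ in its first argument --- dominates an instance of the defining formula for $\mathbf{a}_A$ of the shape $(\bigcurlywedge_i u \to y^i_\delta \to z^i_\delta) \to u \to \bigcurlywedge_i y^i_\delta \to z^i_\delta$. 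Each conjunct is therefore $\succcurlyeq \mathbf{a}_A \in S$, the meet over all parameters retains this lower bound, and upward closure concludes. It is precisely these distribution steps that require binary implicativity: without it one cannot decouple the obligations on the zeroth component from those on the first.
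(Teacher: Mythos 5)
Your proposal is correct and follows essentially the same route as the paper's Appendix A proof: reduce everything to the zeroth components, use antitonicity in the first argument to drop conjuncts from premises, and use binary implicativity (together with regrouping $\bigcurlywedge_i(u_i\curlywedge v_i)=(\bigcurlywedge_i u_i)\curlywedge(\bigcurlywedge_i v_i)$) to split the conclusions, until each conjunct visibly dominates $\mathbf k$, $\mathbf s$ or $\mathbf a$ of $A$. The only cosmetic omission is that $(\mathbf a^\to)_0$ also contains the conjunct $P_1\to Q_1$, but this is literally an instance of the defining meet of $\mathbf a_A$ and so is covered by your uniform recipe.
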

\begin{proof}
    Since the proof is lengthy and tedious, we have relegated it to an appendix.
\end{proof}

The following lemma follows directly from intuitionistic logic.
\begin{lemm}{disjnucl}
    Let $A$ be an arrow algebra and $a \in A$. Then \[ j: A \to A: x \mapsto x \lor a \] defines a nucleus, where $\lor$ refers to the join in the logical ordering (see \refprop{Heytconstr}).
\end{lemm}

\begin{defi}{modificationarrowalg}
    Let $A$ be a binary implicative arrow algebra and $j$ be the nucleus on $A^\to$ defined by $j(x) = x \lor (\bot, \top)$. Then we will refer to
    \[ A^m := A^\to_j \]
    as the \emph{modification of the arrow algebra $A$}.
\end{defi}    

\subsection{Modified realizability} In its original incarnation, modified realizability was a proof-theoretic interpretation. It was invented by Kreisel in 1959, who used it to show the independence of Markov's Principle over intuitionistic arithmetic in finite types \cite{kreisel59}. Later, based on ideas by Hyland and Troelstra, Grayson gave a semantic counterpart to Kreisel's interpretation and defined the modified realizability tripos \cite{grayson81}. (Other references for modified realisability toposes include \cite{hylandong93,vanoosten97,birkedalvanoosten02,johnstone17}.) His definition was as follows; here we use that $\mathbb{N}$ carries the structure of an absolute and discrete pca ($K_1$, see \refexam{examplepca}).

\begin{defi}{graysonstopos} (Grayson's modified realizability tripos) Write
    $$\Sigma_M:=\{(A_0,A_1)\mid A_0\subseteq A_1\subseteq\mathbb{N}, 0\in A_1\}.$$
    Then we can define a functor $$M:\mathbf{Set}^{op}\to\mathbf{Preorder}$$ which sends a set $I$ to the set of functions $I \to \Sigma_M$ preordered as follows: \begin{align*}
        \alpha\leq_{MI} \beta &&\text{ if }&&\exists r\in\mathbb{N} \, \forall i\in I\, \big(  \, \forall n\in\alpha_1(i) \,  \, rn\downarrow\text{ and }rn\in\beta_1(i) \, \big)\\&&&&\text{ and } \big( \, \forall n\in\alpha_0(i) \, rn\in\beta_0(i) \,  \, \big)
    \end{align*}
    In addition, the functor $M$ acts on morphisms by precomposition.
\end{defi}

The following proposition explains how this indexed preorder is related to the modification of an arrow algebra. Indeed, recall that any pca induces an arrow algebra (as in \reftheo{arrowalgfrompcas}), so that we have an arrow algebra $D(K_1)$. (In fact, this arrow algebra is binary implicative.) The tripos induced by its modification is equivalent to $M$.

\begin{prop}{MRTequiv}
    As an indexed preorder, Grayson's modified realizability tripos is equivalent to the tripos induced by the modification of the arrow algebra $D(K_1)$. Hence Grayson's modified realizability tripos is indeed a tripos.
    \begin{proof}
        Let $A = D(K_1)$ be the arrow algebra induced by $K_1$, while $P$ is the tripos induced by $A^\to$ and $P^j$ the subtripos of $P$ induced by $jx = x \lor (\bot, \top)$, as explained in \refprop{nucltripagain}. 

        There is an obvious natural transformation $F_I: MI \to PI$ induced by the inclusion $\Sigma_M \to D(K_1)^\to$. Indeed, its pointwise full and faithful, because 
        \begin{align*}
            \alpha\leq_{MI}\beta &&\Leftrightarrow&&\bigcurlywedge_{i\in I}\alpha(i)\to\beta(i)\in S^\to&&\Leftrightarrow&& \alpha\vdash_{PI}\beta
        \end{align*}
        So it remains to show that the essential image of $F$ is precisely $P^j$.

        For that purpose let us determine what it means for an element $\alpha \in PI$ to belong to $P^jI$. In order for $\alpha\in P^jI$ to hold, we must have $$\bigcurlywedge_{i\in I} (\alpha(i) \lor(\emptyset,\mathbb{N}))\to\alpha(i)\in S^\to,$$ which 
        by intuitionistic reasoning is equivalent to $$\bigcurlywedge_{i\in I}(\emptyset,\mathbb{N})\to\alpha(i)\in S^\to.$$ Using the definition of $S^\to$, this is equivalent to$$\bigcap_{i\in I}\big((\emptyset,\mathbb{N})\to\alpha(i)\big)_0 \mbox{ is inhabited.}$$
        Because $((\emptyset,\mathbb{N})\to\alpha(i))_0=\emptyset\to\alpha_0(i)\cap\mathbb{N}\to\alpha_1(i)=\mathbb{N}\to\alpha_1(i)$, this is equivalent to $$\bigcap_{i\in I}\mathbb{N}\to\alpha_1(i) \mbox{ is inhabited,}$$
        which in turn is equivalent to \[ \bigcap_{i\in I}\alpha_1(i) \mbox{ is inhabited.} \] 
        
        Since $0 \in \bigcap_{i \in I }\alpha_1(i)$ for any element $\alpha \in M$, every element in $M$ lands in $P^j$ under $F$. Conversely, suppose $\alpha\in P^jI$ and  let $a\in\bigcap_{i\in I}\alpha_1(i)$. Now define $$f:\mathbb{N}\to\mathbb{N}:x\mapsto\begin{cases}0&\mbox{if } x=a\\
        a&\mbox{if }x=0\\
        x&\text{else}\end{cases}$$

        By postcomposition with $f$ (on both components) we get $f\circ\alpha\in MI$. Because $f$ is a recursive bijection we can code it as a natural number $e$. With this we get $$e\in\bigcurlywedge_{i\in I}\alpha(i)\to f\circ\alpha(i)$$
        which implies $\alpha\vdash_{PI}f\circ\alpha$. Using the code of the inverse of $f$, we can prove $f\circ\alpha\vdash_{PI}\alpha$. We conclude that $F: M \to P^j$ is pointwise essentially surjective.
    \end{proof}
\end{prop}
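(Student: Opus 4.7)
The plan is to reduce the claim to comparing $M$ with the sub-preorder description $P^j$ of $P_j$ from \refprop{nucltripagain}, where $P$ is the tripos on $A^\to$ and $j(x) = x + (\bot,\top)$. The natural transformation will be induced by the obvious inclusion $\Sigma_M \hookrightarrow A^\to$: set $F_I\colon MI \to PI$, $\alpha \mapsto \alpha$, now viewed as landing in $A^\to$. Naturality is automatic because both functors act on morphisms by precomposition.

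Fullness and faithfulness of each $F_I$ should come out by unwinding definitions. The relation $\alpha \leq_{MI} \beta$ asks for a single $r \in \N$ simultaneously tracking $\alpha_0(i) \to \beta_0(i)$ and $\alpha_1(i) \to \beta_1(i)$ for every $i$, which is precisely membership of $r$ in the first component of $\bigcurlywedge_i \alpha(i) \to \beta(i)$ in $A^\to$, i.e.\ the statement $\alpha \vdash_{PI} \beta$. To characterise the image of $F$ up to isomorphism in $P$, I would next compute when $\alpha \in PI$ lies in $P^jI$. Intuitionistically $j^I\alpha \vdash \alpha$ is equivalent to $\bigcurlywedge_i (\bot,\top) \to \alpha(i) \in S^\to$; unfolding the arrow in $A^\to$ and using that the implication out of $\bot$ is vacuous, this reduces to the existence of $e \in \N$ with $en \in \alpha_1(i)$ for all $n,i$, which, since $K_1$ realises constant functions, is equivalent to $\bigcap_i \alpha_1(i)$ being inhabited. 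Every $\alpha$ in the image of $F$ satisfies this because $0 \in \alpha_1(i)$ by definition of $\Sigma_M$.

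The main obstacle is essential surjectivity: a generic $\alpha \in P^jI$ only provides some witness $a \in \bigcap_i \alpha_1(i)$, not necessarily $a = 0$. I would repair this by postcomposing (in both components) with the recursive involution $f\colon \N \to \N$ that swaps $0$ and $a$ and fixes the rest; then $f \circ \alpha \in MI$, and since both $f$ and its inverse are tracked by natural numbers in $K_1$, codes for them witness $\alpha \vdash_{PI} f \circ \alpha$ and $f \circ \alpha \vdash_{PI} \alpha$, so $F(f \circ \alpha) \cong \alpha$ in $P^j$. This gives the equivalence $M \simeq P^j \simeq P_j$, and in particular shows $M$ is a tripos.
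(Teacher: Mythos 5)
Your proposal is correct and follows essentially the same route as the paper's proof: the inclusion $\Sigma_M \hookrightarrow \mathrm{Pow}(K_1)^\to$ inducing a pointwise full and faithful transformation, the characterisation of $P^jI$ by inhabitation of $\bigcap_{i}\alpha_1(i)$, and essential surjectivity via the recursive involution swapping $0$ and the witness $a$. No substantive differences.
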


\begin{rema}{equivvsequiv}
    Note that we have constructed a natural transformation $F: M \to P^j$ which is pointwise fully faithful and essentially surjective. In a constructive metatheory this seems to be the best possible: we do not see how to prove the existence of a pseudo-inverse without using the Axiom of Choice.
\end{rema}

\begin{rema}{comparisontojvoostenbirkedal}
    This analysis of modified realizability is a variation on the observation in \cite{vanoosten97} that the modified realizability topos is a closed subtopos of the effective topos over Sierpi\'nski space.
\end{rema}

\subsection{Extensional modified realizability topos} The modification construction can also be applied to ${\rm PER}(K_1)$, the arrow algebra of PERs on the natural numbers, which is the result of applying the construction from Subsection 3.2.3 on the absolute and discrete pca $K_1$. Again, this arrow algebra is binary implicative.

\begin{defi}{triposT} Let $$\Sigma_T:=\{(R,S)\in\text{PER}(K_1)\mid R\subseteq S,(0,0)\in S\}.$$ Then we can define a functor
\[ T: {\bf Sets}^{\rm op} \to {\bf Preorder} \] by putting $TI = \Sigma_T^I$ and
    \begin{align*}
        \alpha\leq_{TI}\beta&&\text{if} && \exists r\in\mathbb{N} \, \forall i\in I \, \big( \, r\in \alpha_0(i)\to\beta_0(i)\cap \alpha_1(i)\to\beta_1(i) \, \big)
    \end{align*}
    for any  $\alpha,\beta\in\Sigma^X$. Moreover, $T$ acts on morphisms by precomposition.
\end{defi}

\begin{prop}{PERtrip}
    The indexed preorder $T$ and the tripos induced by the modification of ${\rm PER}(K_1)$ are equivalent as indexed preorders. Hence $T$ is also a tripos.
    \begin{proof}
        The proof is similar to that of \refprop{MRTequiv}. Write $A = {\rm PER}(K_1)$ and denote the tripos induced by $A^\to$ by $P$ and its subtripos induced by the nucleus $j(x) = x \lor (\bot, \top)$, computed as in \refprop{nucltripagain}, by $P^j$. There is again an obvious natural transformation $F: T \to P$ induced by the inclusion $\Sigma_T \to {\rm PER}(K_1)^\to$, which is pointwise full and faithful. To show that its essential image is $P^j$, first note that in this case $(\bot,\top)=(\emptyset,\N\times\N)$. Then an element $\alpha \in PI$ belongs to $P^jI$ precisely when $$\bigcap_{i\in I}\N\times\N\to\alpha_1(i)\mbox{ is inhabited.}$$
        This shows the image of $F$ is included in $P^j$. Conversely, if $(a,b)\in \bigcap_{i\in I}\alpha_1(i)$, then also $$(a,a)\in\bigcap_{i\in I}\alpha_1(i),$$ because   $\alpha_1(i)$ is symmetric and transitive. This means that if $f$ is the recursive bijection from \refprop{MRTequiv}, then for $$\tilde{f}:\N^2\to\N^2:(x,y)\mapsto (fx,fy)$$ we have $\tilde{f}\circ\alpha\in TI$ as well as both $\alpha\vdash\tilde{f}\circ\alpha$ and $\tilde{f}\circ\alpha \vdash \alpha$.
    \end{proof}
\end{prop}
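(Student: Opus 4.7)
The plan is to mirror the proof of \refprop{MRTequiv} almost line by line, with the one new ingredient being the use of symmetry and transitivity of PERs to reduce a witnessing pair to a diagonal one. Write $A = {\rm PER}(\N)$, let $P$ be the tripos coming from $A^\to$ and $P^j$ its subtripos associated with the nucleus $j(x) = x + (\bot, \top)$, computed as in \refcoro{nucltripagainplus}. The obvious inclusion $\Sigma_T \hookrightarrow A^\to$ induces a natural transformation $F: T \to P$ by postcomposition, and I would first check that it is pointwise full and faithful: for $\alpha, \beta \in TI$, the condition $\alpha \leq_{TI} \beta$ unpacks to the existence of a single natural number $r$ realizing both $\alpha_0(i) \to \beta_0(i)$ and $\alpha_1(i) \to \beta_1(i)$ uniformly in $i$, which is exactly the statement that $\bigcurlywedge_{i} \alpha(i) \to \beta(i)$ contains an element of the filter, i.e. $\alpha \vdash_{PI} \beta$.

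The remaining and more substantive step is to identify the essential image of $F$ with $P^jI$. Here I would run the same unpacking as in \refprop{MRTequiv}: $\alpha \in P^j I$ iff $\bigcurlywedge_{i \in I}(\bot,\top) \to \alpha(i) \in S^\to$, which since $(\bot,\top) = (\emptyset, \N \times \N)$ and $\emptyset \to \alpha_0(i) = \N \times \N$ comes down to
\[ \bigcap_{i \in I} \bigl( \N \times \N \to \alpha_1(i) \bigr) \text{ being inhabited, equivalently } \bigcap_{i \in I} \alpha_1(i) \text{ being inhabited}. \]
Inclusion of the image of $F$ in $P^j$ is then immediate from the requirement $(0,0) \in \alpha_1(i)$ built into $\Sigma_T$.

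For the converse, i.e.\ essential surjectivity, I expect the main (small) obstacle to appear: we are given some $(a,b) \in \bigcap_i \alpha_1(i)$ but what we need in order to land back in $\Sigma_T$ is a \emph{diagonal} pair in every $\alpha_1(i)$. This is where the PER hypothesis pays off: from $(a,b) \in \alpha_1(i)$ together with symmetry and transitivity we obtain $(a,a) \in \alpha_1(i)$ for all $i$ simultaneously. With a uniform diagonal witness $a$ in hand, I can reuse the recursive bijection $f$ of \refprop{MRTequiv} swapping $0$ and $a$, and lift it to $\tilde f: \N^2 \to \N^2$ by $\tilde f(x,y) = (fx, fy)$. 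Postcomposing $\alpha$ with $\tilde f$ componentwise produces an element of $TI$ (since $(0,0)$ is now forced into each $\tilde f \circ \alpha_1(i)$), and codes for $\tilde f$ and $\tilde f^{-1}$ give realizers of $\alpha \vdash \tilde f \circ \alpha$ and $\tilde f \circ \alpha \vdash \alpha$ in $PI$, establishing the required equivalence.

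Once the two tripos structures are identified up to pointwise equivalence in this way, the fact that $T$ inherits a tripos structure from $P^j$ is automatic, completing the proof.
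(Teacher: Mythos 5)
Your proposal is correct and follows essentially the same route as the paper: the same inclusion-induced natural transformation $F$, the same unpacking of $\alpha \in P^jI$ to inhabitation of $\bigcap_{i}(\N\times\N\to\alpha_1(i))$, and the same use of symmetry and transitivity to obtain a diagonal witness $(a,a)$ before applying the lifted swap $\tilde f$. The only cosmetic discrepancy is your citation of \refcoro{nucltripagainplus} where the paper uses \refprop{nucltripagain}; since the criterion you actually apply is $j^I\alpha\vdash\alpha$, this does not affect the argument.
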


In his MSc thesis \cite{devries17} (supervised by the first author), Mees de Vries defines a subtripos of $T$. One of his results is that in the internal logic of the resulting topos both the Axiom of Choice for all finite types as well as the Independence of Premise principle hold. This can be seen as an improvement over Grayson's topos: indeed, these are characteristic principles of modified realizability, but the Axiom of Choice for finite types fails in Grayson's topos.

The tripos studied by De Vries is constructed as follows.
\begin{defi}{meestripos} Write $$\Sigma_V:=\{(A_0,A_1,\sim)\mid A_0\subseteq A_1\subseteq\mathbb{N},0\in A_1,\sim\text{an equivalence relation on }A_1\}.$$ Then we can define functor
    \[ V: {\bf Sets}^{\rm op} \to {\bf Preorder} \]
by putting $VI = \Sigma^I$ and \begin{align*}
        \alpha\leq_{VI} \beta&&\text{if}&&\exists r\in \mathbb{N} \, \forall i\in I \big( \, \forall n\in\alpha_1(i) \, rn\downarrow\text{ and }rn\in\beta_1(i) \, \big)\\&&&&\text{and } \big( \, \forall n,m\in\alpha_1(i) \, n\sim m\text{ implies }rn\sim rm \, \big)\\&&&&\text{ and } \big( \, \forall n\in\alpha_0(i) \, rn\in\beta_0(x) \, \big)
    \end{align*}
for any $I\in\mathbf{Set}$ and $\alpha,\beta\in\Sigma^I$ (in this connection we refer to $r$ as the \emph{realizer}). Moreover, on morphisms $V$ acts by precomposition.
\end{defi}

To see how the tripos defined defined by De Vries fits into our framework, we first need to make a couple of observations. Let us say that two nuclei $j, k$ on $A$ are equivalent if $\bcw_{a \in A} ja \to ka \in S$ and $\bcw_{a \in A} ka \to ja \in S$ hold.

\begin{lemm}{domnucl} Let us write $B$ for the arrow algebra ${\rm PER}(K_1)^\to$ obtained by applying the Sierpi\'nski construction to the arrow algebra ${\rm PER}(K_1)$. \begin{enumerate}
    \item[(i)] If $j$ and $k$ are nuclei on an arrow algebra $A$ which commute in the sense that
    \[ \bcw_{a \in A} jka \to kja \in S \mbox{ and } \bcw_{a \in A} kja \to jka \in S \]
    hold, then $jk : x \mapsto jkx$ and $kj: x \mapsto kjx$ define equivalent nuclei on $A$.
    \item[(ii)] If $\sigma: \mathbb{N} \mapsto \mathbb{N}$ is the successor map sending $n$ to $n+1$ and we write $\sigma_* R = \{ (\sigma x, \sigma y) \, : (x, y) \in R \}$ for any $R \in {\rm PER}(K_1)$, then
    \[ j: B \to B: (R, S) \mapsto (\sigma_* R, \sigma_* S \cup \{ (0,0 \})) \]
    defines a nucleus on $B$ which is equivalent to the modification nucleus $x \mapsto x \lor (\bot, \top)$.
    \item[(iii)] The operation $$k:B \to B:(R,S)\mapsto(S\upharpoonright{\rm dom} \, R,S)$$ defines a nucleus. Here ${\rm dom}(R) = \{ x \, : \, (x,x) \in R \}$ and $S \upharpoonright X = \{ (x, y) \in S \, : x, y \in X \}$.
    \item[(iv)] The nuclei $j$ and $k$ defined in items (ii) and (iii) commute, so $jk$ and $kj$ define equivalent nuclei.
\end{enumerate}
\begin{proof} The statement in (i) is an exercise is intuitionistic logic which we leave to the reader.
    
For (ii) it suffices to show that $j(R,S)$ has the universal property of $(R, S) \lor (\bot, \top)$. To see this, note that if $(R, S) \to (U, V)$ is realized by $e$ and $(\bot, \top) \to (U, V)$ by $f$, then $j(R, S) \to (U, V)$ is realized by the function which sends $0$ to $f0$ and $n+1$ to $e(n)$. Conversely, if $j(R, S) \to (U, V)$ is realized by $e$, then $(R, S) \to (U, V)$ is realized by the function which sends $n$ to $e(n+1)$, while $(\bot, \top) \to (U, V)$ is realized by the total function which is constant at $e0$.

For (iii) we will only show that $k$ satisfies the the third requirement for being a nucleus as the first two hold trivially. 

    In our case $$\bigcurlywedge_{a,b\in A}(a\to kb)\to(ka\to kb)\in S$$ translates to there being an element $(a,a')$ such that for any $(R,S),(T,U)\in {\rm PER}(\N)^\to$ and any $(x,x')\in ( \, (R\to U\upharpoonright\dom(T) \, )\cap S\to U,S\to U)$, we have $(ax,a'x')\in( \, ( \, S\upharpoonright\dom(R)\to U\upharpoonright\dom(T) \, )\cap S\to U,S\to U)$. We claim this element is just the identity. As you can see this clearly works for the second coordinate, so we only have to worry about the first.

    For this, take $(x,x')\in ( \, R\to U\upharpoonright\dom(T) \, ) \cap S\to U$ and $(s,s')\in S\upharpoonright\dom(R)$. If we get $(xs,x's')\in U\upharpoonright\dom(T)$, we are done. First of all note that because $(x,x')\in S\to U$, we have $(xs,x's')\in U$ and it remains to show that $xs, x's' \in {\rm dom}(T)$. Because $s,s'\in\dom(R)$, we have that $(s,s),(s',s')\in R$ and therefore $(xs,x's),(xs',x's')\in U\upharpoonright\dom(T)$. Hence  $xs, x's' \in {\rm dom}(T)$ and we are done.

Finally, we have $jk = kj$ as functions $B \to B$, so the nuclei $j$ and $k$ certainly commute and therefore the statement in (iv) holds as well.
\end{proof}
\end{lemm}

\begin{prop}{arrowalgfordevries} Let $B^{jk}$ be the arrow algebra obtained from $B = {\rm PER}(\mathbb{N})^\to$ by considering the nucleus $j \circ k$, where $j$ and $k$ are the nuclei from the previous lemma. Then the indexed preorder $V$ is equivalent to the tripos induced by $B^{jk}$. In particular, $V$ is a tripos.
    \begin{proof}
        Let us write $P$ for the tripos induced by $B = {\rm PER}(\mathbb{N})^\to$ and $P^{jk}$ for the subtripos induced by $j \circ k$, computed as in \refprop{nucltripagain}. Note that there is a map $\Sigma_V \to {\rm PER}(\N)^\to$ sending $(A_0, A_1, \sim)$ to $(\sim \upharpoonright A_0, \sim)$, which induces a natural transformation $F: V \to P$ by postcomposition with this map. We can again easily verify that this transformation is pointwise full and faithful, so it remains to show that its pointwise essential image is $P^{jk}$. Note that an element $\alpha \in PI$ belongs to $P^{jk}(I)$ precisely when $j\alpha \vdash \alpha$ and $k\alpha \vdash \alpha$. By an argument similar to that in \refprop{PERtrip} we can verify that any $\alpha$ such that $j\alpha \vdash \alpha$ is equivalent to one with $0 \in \bigcap_{i \in I} \alpha_1(i)$. And then an argument similar to one in \refcoro{nucltripagainplus} shows that any such with $k\alpha \vdash \alpha$ is equivalent to one with $\alpha = k\alpha$, because $k^2 = k$. This shows that any element in $P^{jk}I$ is isomorphic to one in the image of $FI$.
    \end{proof}
\end{prop}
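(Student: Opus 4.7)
The strategy is to follow the template of \refprop{MRTequiv} and \refprop{PERtrip}: construct a pointwise fully-faithful natural transformation $F: V \to P$ by post-composition with an inclusion $\iota: \Sigma_V \hookrightarrow \mathrm{PER}(\N)^\to$, and then identify its pointwise essential image with $P^{j \times k}$, which is the equivalent form of the tripos induced by $B$ provided by \refprop{nucltripagain}.

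First I would define $\iota(A_0, A_1, \sim) := (\sim \upharpoonright A_0, \sim)$. Since $\sim$ is an equivalence relation on $A_1 \supseteq A_0$, its restriction is again a PER contained in $\sim$, so $\iota$ lands in $\mathrm{PER}(\N)^\to$; post-composition gives the required natural transformation $F: V \to P$. Pointwise fully-faithfulness of $F$ is a direct unfolding: $F\alpha \vdash_{PI} F\beta$ asks for a single recursive index $r$ realizing both coordinates of $\bigcap_i F\alpha(i) \to F\beta(i) \in S^\to$, and these two conditions match the three clauses defining $\leq_{VI}$ (domain preservation on the $A_1$-components, preservation of $\sim$, and sending the $A_0$-component into $B_0$; compatibility with $\sim$ on $A_0$ is automatic from compatibility on $A_1$).

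Next, I would characterise when $\alpha \in P^{j \times k}I$ by analysing the two nuclei separately. For the $j$-part, the argument of \refprop{MRTequiv} and \refprop{PERtrip} shows that $j\alpha \vdash \alpha$ can be arranged, up to equivalence, to mean that $\bigcap_i \alpha_1(i)$ contains a diagonal pair, which after componentwise application of the recursive swap-bijection $\tilde f$ from \refprop{PERtrip} becomes $(0,0) \in \bigcap_i \alpha_1(i)$. For the $k$-part, I would verify that $k^2 = k$ --- immediate since $\dom(S \upharpoonright \dom R) = \dom R$ --- so that \refcoro{nucltripagainplus} applies and lets us replace $\alpha$ by $k\alpha$, i.e.\ by a representative with $\alpha_0 = \alpha_1 \upharpoonright \dom \alpha_0$. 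Elements of this shape, together with $(0,0) \in \alpha_1(i)$ for each $i$, are exactly those in the image of $\iota$.

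The delicate point, and the step I expect to be the main obstacle, is arranging these two simplifications simultaneously rather than in isolation. My plan is to apply $k$ first, replacing the first coordinate by its diagonal-restriction to $\dom \alpha_0$ while leaving the second coordinate untouched, and only then to post-compose both coordinates with the recursive permutation $\tilde f$; because $\tilde f$ is a bijection it preserves the relation $\alpha_0 = \alpha_1 \upharpoonright \dom \alpha_0$ while also moving some fixed element of $\dom \alpha_1(i)$ to $0$. The resulting element is then in the literal image of $F$ and equivalent to $\alpha$ in $P$, so $F$ is pointwise essentially surjective onto $P^{j \times k}$. Combined with fully-faithfulness this yields the claimed equivalence of indexed preorders, and in particular shows that $V$ is a tripos.
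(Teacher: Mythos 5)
Your proposal is correct and follows essentially the same route as the paper: the same inclusion $(A_0,A_1,\sim)\mapsto(\sim\upharpoonright A_0,\sim)$ inducing a pointwise fully faithful $F: V\to P$, the same reduction of the $j$-condition to $(0,0)\in\bigcap_i\alpha_1(i)$ via the recursive permutation as in \refprop{PERtrip}, and the same use of $k^2=k$ with \refcoro{nucltripagainplus} to normalise the first coordinate. The only addition is your explicit remark on performing the two normalisations in a compatible order, which the paper leaves implicit.
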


\begin{rema}{ontriposT} We suspect that also in the internal logic of the topos associated to $T$ the characteristic principles of modified realizability hold (Axiom of Choice for all finite types and Independence of Premise). We hope to investigate this further in future work.
\end{rema}    

\section{Conclusion}

We have introduced the notion of an arrow algebra and shown it to be a flexible tool for constructing triposes. This is due to the fact that there are many naturally occurring examples, such as locales and those induced by pcas, and that the notion of an arrow algebra is closed under various constructions, in particular a notion of a subalgebra given by nuclei and a Sierpi\'nski construction. 

The main omission in this paper is that we did not introduce a notion of morphism of arrow algebras, related to the notion of a geometric morphism of toposes. Fortunately, this gap has been filled by Umberto Tarantino \cite{tarantino25}, whose work should be seen as the sequel to this paper.

\bibliographystyle{plain}
\bibliography{hSetoids}

\appendix

\section{Sierpi\'nski construction}

This appendix is devoted to a proof of the following result. We remind the reader that our convention is that $\to$ binds stronger than $\bigcurlywedge$ or $\curlywedge$.

\begin{prop}{Sierp}
    Let $A$ be a binary implicative arrow algebra. Define $$A^\to:=\{x = (x_0,x_1)\in A^2\mid x_0 \preccurlyeq x_1\}$$
    If we order $A^\to$ pointwise, and define implication as $$x\to y:=(x_0\to y_0 \curlywedge x_1\to y_1,x_1\to y_1),$$ then $A^\to$ carries an arrow structure. Moreover,  $$S^\to:=\{x\in A^\to\mid x_0\in S\}$$ defines a separator on this arrow structure.
    \begin{proof}
    It is easy to see $A^\to$ is an arrow structure and that $S^\to$ is upwards closed and closed under modus ponens. The difficulty is showing that $S^\to$ contains the combinators ${\bf k}^\to, {\bf s}^\to$ and ${\bf a}^\to$, which we will do by showing that ${\bf k}^\to_0 \cgeq {\bf k}, {\bf s}^\to_0 \cgeq {\bf s}$ and ${\bf a}^\to_0 \cgeq {\bf a}$.

    For $\mathbf{k}^\to\in S^\to$ note that \begin{eqnarray*}
        \mathbf{k}^\to_0&= & \bigcurlywedge_{a,b} (a_0\to(b_0\to a_0\curlywedge b_1\to a_1))\curlywedge a_1\to b_1\to a_1\\
        &\succcurlyeq & \bigcurlywedge_{a,b} (a_0\to(b_0\to a_0\curlywedge b_1\to a_0))\curlywedge a_1\to b_1\to a_1\\
        &= & \bigcurlywedge_{a,b} a_0\to b_1\to a_0\curlywedge a_1\to b_1\to a_1\\
        &\succcurlyeq & \mathbf{k}\in S
    \end{eqnarray*}

    Writing 
    \begin{eqnarray*}
        \sigma_0 & = & \bigcurlywedge_{a,b,c} \big( \, (a_0\to (b_0\to c_0\curlywedge b_1\to c_1))\curlywedge a_1\to b_1\to c_1 \, \big)\to \\
        & & \big( \, (a_0\to b_0\curlywedge a_1\to b_1)\to (a_0\to c_0) \, \big) \\
        \sigma_1 & = & \bigcurlywedge_{a,b,c} \big( \, (a_0\to (b_0\to c_0\curlywedge b_1\to c_1))\curlywedge a_1\to b_1\to c_1 \, \big)\to \\
        & & \big( \, (a_0\to b_0\curlywedge a_1\to b_1)\to (a_1\to c_1) \, \big) \\
        \sigma_2 & = & \bcw_{a,b,c} \big( \, (a_0\to (b_0\to c_0\curlywedge b_1\to c_1))\curlywedge a_1\to b_1\to c_1 \, \big)\to \\
        & & \big( \, (a_1\to b_1)\to a_1\to c_1 \, \big)\\
        \sigma_3 & = & \bcw_{a,b,c} (a_1\to b_1\to c_1)\to (a_1\to b_1)\to a_1\to c_1
    \end{eqnarray*}
    we can calculate that
    \begin{eqnarray*}
       \mathbf{s}^\to_0 & =&\bigcurlywedge_{a,b,c} \big( \, (a_0\to (b_0\to c_0\curlywedge b_1\to c_1))\curlywedge a_1\to b_1\to c_1 \, \big)\to\\
       & & \big( \, ((a_0\to b_0\curlywedge a_1\to b_1)\to (a_0\to c_0\curlywedge a_1\to c_1))\curlywedge (a_1\to b_1)\to a_1\to c_1 \, \big)\\
       & & \curlywedge \big( \, (a_1\to b_1\to c_1)\to (a_1\to b_1)\to a_1\to c_1 \, \big) \\
       & = & \bigcurlywedge_{a,b,c} \big( \, (a_0\to (b_0\to c_0\curlywedge b_1\to c_1))\curlywedge a_1\to b_1\to c_1 \, \big)\to\\
       & & \big( \, (a_0\to b_0\curlywedge a_1\to b_1)\to (a_0\to c_0\curlywedge a_1\to c_1) \, \big)\curlywedge\\
       &   &  \big( \, (a_0\to (b_0\to c_0\curlywedge b_1\to c_1))\curlywedge a_1\to b_1\to c_1 \, \big)\to\\
       &  & \big( \, (a_1\to b_1)\to a_1\to c_1 \, \big) \curlywedge \sigma_3 \\
       & = & \bigcurlywedge_{a,b,c} \big( \, (a_0\to (b_0\to c_0\curlywedge b_1\to c_1))\curlywedge a_1\to b_1\to c_1 \, \big)\to\\
       & & \big( \, (a_0\to b_0\curlywedge a_1\to b_1)\to a_0\to c_0 \, \big)\curlywedge\\
       & & \big( \, (a_0\to (b_0\to c_0\curlywedge b_1\to c_1))\curlywedge a_1\to b_1\to c_1 \, \big)\to\\
       & & \big( \, (a_0\to b_0\curlywedge a_1\to b_1)\to  a_1\to c_1 \, \big)\curlywedge \sigma_2 \curlywedge \sigma_3 \\
       & = & \sigma_0 \curlywedge \sigma_1 \curlywedge \sigma_2 \curlywedge \sigma_3
        \end{eqnarray*}
    using that $A$ is binary implicative. So we need to show that $\sigma_i \cgeq {\bf s}$ for all $i \in \{ 0, 1, 2, 3 \}$, which is clear for $i = 3$. For $i = 2$ note that 
    \begin{eqnarray*}
       & & ((a_0\to (b_0\to c_0\curlywedge b_1\to c_1))\curlywedge a_1\to b_1\to c_1)\to ((a_1\to b_1)\to a_1\to c_1)\\
       & \succcurlyeq & (a_1\to b_1\to c_1)\to (a_1\to b_1)\to a_1\to c_1,
    \end{eqnarray*}
    while for $i \in \{ 0, 1 \}$ we have
    \begin{eqnarray*}
         &  & ((a_0\to (b_0\to c_0\curlywedge b_1\to c_1))\curlywedge a_1\to b_1\to c_1)\to ((a_0\to b_0\curlywedge a_1\to b_1)\to (a_i\to c_i)) \\
         & \cgeq & (a_i \to b_i \to c_i) \to (a_i \to b_i) \to (a_i \to c_i).
    \end{eqnarray*}

    To prove  $\mathbf{a}^\to_0 \cgeq {\bf a}$, note that 
    \begin{eqnarray*}
    \mathbf{a}^\to_0 & = & \big(\bigcurlywedge_{x,I, y_i,z_i}\big(\bigcurlywedge_{i}x\to y_i\to z_i\big)_0\to \big(x\to\bigcurlywedge_{i}y_i\to z_i\big)_0\big)\curlywedge\mathbf{a}^\to_1
    \end{eqnarray*}
    As $\mathbf{a}^\to_1 \succcurlyeq \mathbf{a} \in S$, it suffices to show $$\mathbf{a}\preccurlyeq\big(\bigcurlywedge_{x,y_i,z_i}\big(\bigcurlywedge_{i}x\to y_i\to z_i\big)_0\to \big(x\to\bigcurlywedge_{i}y_i\to z_i\big)_0\big)$$

So we calculate \begin{displaymath} \begin{array}{cl}
    &\big(\bigcurlywedge_{i}x\to y_i\to z_i\big)_0\to \big(x\to\bigcurlywedge_{i}y_i\to z_i\big)_0\\ \\
    =&\Big(\big(\bigcurlywedge_i x_0\to(y_{i0}\to z_{i0}\curlywedge y_{i1}\to z_{i1})\big)\curlywedge\big(\bigcurlywedge_i x_1\to y_{i1}\to z_{i1}\big)\Big)\to\\ \\
    &\Big(\big(x_0\to\bigcurlywedge_i y_{i0}\to z_{i0}\curlywedge y_{i1}\to z_{i1}\big)\curlywedge\big(x_1\to\bigcurlywedge_i y_{i1}\to z_{i1}\big)\Big)\\ \\
    =&\Big(\big(\big(\bigcurlywedge_i x_0\to(y_{i0}\to z_{i0}\curlywedge y_{i1}\to z_{i1})\big)\curlywedge\big(\bigcurlywedge_i x_1\to y_{i1}\to z_{i1}\big)\big)\to\\
    &\big(x_0\to\bigcurlywedge_i y_{i0}\to z_{i0}\curlywedge y_{i1}\to z_{i1}\big)\Big)\curlywedge\\
    &\Big(\big(\big(\bigcurlywedge_i x_0\to(y_{i0}\to z_{i0}\curlywedge y_{i1}\to z_{i1})\big)\curlywedge\big(\bigcurlywedge_i x_1\to y_{i1}\to z_{i1}\big)\big)\to\\
    &\big(x_1\to\bigcurlywedge_i y_{i1}\to z_{i1}\big)\big)\Big)
\end{array} \end{displaymath}
Here the final equality follows from binary implicativity. The second part we can be bound from below by $$\mathbf{a}\preccurlyeq\big(\bigcurlywedge_i x_1\to y_{i1}\to z_{i1}\big)\to
    x_1\to\bigcurlywedge_i y_{i1}\to z_{i1}$$
So we only have to bound the first part from below by $\mathbf{a}$.
\begin{displaymath}
\begin{array}{cl}
    &\big(\big(\bigcurlywedge_i x_0\to(y_{i0}\to z_{i0}\curlywedge y_{i1}\to z_{i1})\big)\curlywedge\big(\bigcurlywedge_i x_1\to y_{i1}\to z_{i1}\big)\big)\to\\ \\
    &\big(x_0\to\bigcurlywedge_i y_{i0}\to z_{i0}\curlywedge y_{i1}\to z_{i1}\big)\\ \\
    \succcurlyeq&\big(\bigcurlywedge_i x_0\to(y_{i0}\to z_{i0}\curlywedge y_{i1}\to z_{i1})\big)\to\big(x_0\to\bigcurlywedge_i y_{i0}\to z_{i0}\curlywedge y_{i1}\to z_{i1}\big)\\ \\
    =&\big(\bigcurlywedge_i x_0\to(y_{i0}\to z_{i0}\curlywedge y_{i1}\to z_{i1})\big)\to\big(x_0\to\bigcurlywedge_i y_{i0}\to z_{i0}\big)\curlywedge\\ \\
    &\big(\bigcurlywedge_i x_0\to(y_{i0}\to z_{i0}\curlywedge y_{i1}\to z_{i1})\big)\to\big(x_0\to\bigcurlywedge_i y_{i1}\to z_{i1}\big)\\ \\
    =&\Big(\big(\big(\bigcurlywedge_i x_0\to y_{i0}\to z_{i0}\big)\curlywedge\big(\bigcurlywedge_i x_0\to y_{i1}\to z_{i1}\big)\big)\to\big(x_0\to\bigcurlywedge_i y_{i0}\to z_{i0}\big)\Big)\curlywedge\\ \\
    &\Big(\big(\big(\bigcurlywedge_i x_0\to y_{i0}\to z_{i0}\big)\curlywedge\big(\bigcurlywedge_i x_0\to y_{i1}\to z_{i1}\big)\big)\to\big(x_0\to\bigcurlywedge_i y_{i1}\to z_{i1}\big)\Big)\\ \\
    \succcurlyeq&\Big(\big(\bigcurlywedge_i x_0\to y_{i0}\to z_{i0}\big)\to x_0\to\bigcurlywedge_i y_{i0}\to z_{i0}\Big)\curlywedge\\ \\
    &\Big(\big(\bigcurlywedge_i x_0\to y_{i1}\to z_{i1}\big)\to x_0\to\bigcurlywedge_i y_{i1}\to z_{i1}\Big)\\
    \succcurlyeq & \mathbf{a}
\end{array}
\end{displaymath}
This completes the proof.
    \end{proof}
\end{prop}

\end{document}